\def\pipi{\pi/\pi}
\def\de{\mathcal{DE}}
\def\deb{\mathcal{DE}^\bullet}
\def\ed{\mathcal{ED}}
\def\edb{\mathcal{ED}^\bullet}
\def\dbar{\bar{\del}}
\newtheorem{thm}{Theorem}  [section]
\newtheorem*{thm*}{Theorem}
\newtheorem*{prop*}{Proposition}
\newtheorem{cor}[thm]{Corollary}
\newtheorem*{cor*}{Corollary}
\newtheorem{lem}[thm]{Lemma}
\newtheorem*{claim*}{Claim}
\newtheorem{prop}[thm]{Proposition}
\theoremstyle{remark}
\newtheorem{rem}[thm]{Remark}
\newtheorem*{rem*}{Remark}
\newtheorem{crit-rem}[thm]{Critical remark}
\newtheorem{remarks}[thm]{Remarks}
\newtheorem{example}[thm]{Example}
\newtheorem*{example*}{Example}
\newtheorem*{defn*}{Definition}
 \DeclareMathOperator{\gm}{\mathbb G_m}
\def\inv{^{-1}}
\def\alb{\mathrm{alb}}
\DeclareMathOperator{\del}{\partial}
 \DeclareMathOperator{\Hom}{Hom}
\DeclareMathOperator{\dlog}{dlog}
\def\calh{\mathcal {H}}
\def\cF{\mathcal{F}}
\def\refp #1.{(\ref{#1})}
\newcommand\carets [1]{\langle #1 \rangle}
\newcommand\caretsmx[1]{\langle #1 \rangle_\mathrm{mx}}
\def\bfm{\mathbf m}
\def\sN{\mathcal N}
\newcommand{\ul}[1]{\underline {#1}}
\def\sbr #1.{^{[#1]}}
\def\sfl #1.{^{\lfloor #1\rfloor}}
\def\what{\widehat}
\def\inv{^{-1}}
\def\?{{\bf{??}}}
\def\dgla{differential graded Lie algebra\ }
\def\H{\mathcal H}
\def\HH{\mathbb H}
\def\C{\mathbb C}
\def\P{\mathbb P}
\def\N{\mathbb N}
\def\R{\mathbb R}
\def\uk{{\ul{k}}}
\def\sgn{\text{\rm sgn} }
\def\Q{\mathbb Q}
\def\O{\mathcal O}
\def\id{\text{id}}
\def\g{\mathfrak g}
\def\1/2{\frac{1}{2}}
\def\I{\mathcal{ I}}
\def\im{\text{im}}
\def\2{{[2]}}
\def\l{\ell}
\def\nl{\newline}
\def\<{\langle}
\def\>{\rangle}
\def\im{\text{im}}
\def\2{{[2]}}
\def\l{\ell}
\def\scl #1.{^{\lceil#1\rceil}}
\def\spr #1.{^{(#1)}}
\def\sbc #1.{^{\{#1\}}}
\def\subpr#1.{_{(#1)}}
\def\ds{\vskip 1cm}
\def\beq{\begin{equation*}}
\def\eeq{\end{equation*}}
\newcommand{\llog}[1]{\langle\log {#1} \rangle}
\newcommand{\llogm}[1]{\langle\log  {^-#1} \rangle}
\newcommand{\llogs}[1]{\langle\log  {^*#1} \rangle}
\newcommand{\llogmp}[1]{\langle\log  {^{\mp}#1} \rangle}
\newcommand{\llogmps}[2]{\langle\log  {^{\mp#2}#1} \rangle}
\newcommand{\mlog}[1]{\langle-\log {#1} \rangle}
\def\g3{{\Gamma\spr 3.}}
\newcommand{\eqspl}[2]{
%\ss{\bf{label:#1}}\nl
\begin{equation}\label{#1}
\begin{split}
%\ul{\bf{label: #1}}\\
#2\end{split}\end{equation}}
\newcommand{\eqsp}[1]{\begin{equation*}
\begin{split}#1\end{split}\end{equation*}}
\newcommand{\exseq}[3]{
0\to #1\to #2\to #3\to 0
}
\newcommand{\beginalphaenum}{
\begin{enumerate}\renewcommand{\labelenumi}{ }
\item \begin{enumerate}
}
\def\eex{\end{rm}\end{example}}
\begin{document} 

%\end{document}
%\title{Differential complexes, degeneracy\\ and Hodge theory on Poisson manifolds}
\title{Differential complexes and Hodge theory on log-symplectic manifolds}
\author %{author}
{Ziv Ran}
%\end{document}
%\Large

%\thanks{\raggedright{

%Partially supported by NSA Grant MDA904-02-1-0094} }
\thanks{arxiv.org/1710.11179 }
\date {\today}% \enddate

%\affil University of California, Riverside\endaffil

\address {\nl UC Math Dept. \nl
Big Springs Road Surge Facility
\nl
Riverside CA 92521 US\nl 
ziv.ran @  ucr.edu\nl
\url{http://math.ucr.edu/~ziv/}
}

%\email {ziv.ran @ucr.edu}
 \subjclass[2010]{14J40, 32G07, 32J27, 53D17}
\keywords{Poisson structure, 
 log complex, mixed Hodge theory}

\begin{abstract}
We study certain complexes of differential forms,
including 'reverse de Rham'  complexes, on 
(real or complex) Poisson manifolds, especially
holomorphic  log-symplectic ones.
We relate these to the degeneracy divisor and rank loci of the Poisson bivector.
In some good holomorphic cases we compute the local cohomology
of these complexes. In the K\"ahlerian case, we deduce a relation
between the multiplicity
 loci of the degeneracy divisor  and the Hodge numbers of the manifold.
 We also show that vanishing of one of these Hodge numbers is related to
 unobstructed deformations of the normalized degeneracy divisor with
 its induced Poisson structure.
\end{abstract}
\maketitle
\small
\tableofcontents
\normalsize
\section*{Introduction}
One of the interesting features of geometry on (real or complex) 
Poisson manifolds $(X, \Pi)$ is the richness
of the calculus, which in a sense is twice as rich as on a plain manifold:
the usual plus a dual. Interesting
differential operators can be constructed using the Poisson bivector $\Pi$. One of these
is the Koszul-Brylinski operator on differential forms:
\[\del=d\iota_\Pi-\iota_\Pi d\]
where $d$ is exterior derivative and $\iota_\Pi$ denotes interior 
multiplication by $\Pi$.
This is an operator of degree (-1) on differential forms, and Brylinski \cite{brylinski}
has shown that it has square zero, hence gives rise to a 'reverse de Rham' complex:
\[...\stackrel{\del}{\to}\Omega^i_X\stackrel{\del}{\to}\Omega_X^{i-1}\stackrel{\del}{\to}...\]
He has also shown, using the Hodge $*$ operator in the real $C^\infty$ category,  
that when $\Pi$ is a symplectic Poisson structure, i.e. everywhere nondegenerate,
the reverse de Rham complex is 
equivalent to the usual de Rham complex, hence computes
the cohomology $H^\bullet(X, \R)$.\par
Here we start with the observation that a different set of operators
of degree (-1), namely
\[\delta_i=id\iota_\Pi-(i-1)\iota_\Pi d:\Omega^{n+i}_X\to \Omega_X^{n+i-1}\]
($n\in\N$  fixed, usually as half the dimension of $X$),
can be used to construct a 
reverse De Rham complex $\Theta^\bullet=\Omega_X^{\dim(X)-\bullet}$ 
of differential forms called the
'Mahr de Poisson' or MdP complex, in either the $C^\infty$
or holomorphic category (or for that matter, in any setting where $d$ and $\iota_\Pi$
make sense). 
More generally, for any $\lambda\in\C$, there is a complex $\Theta^\bullet_\lambda$
with differential
\[\delta_{\lambda, j}=(j+\lambda)d\iota_\Pi-(j+\lambda-1)\iota_\Pi d
:\Omega^j\to\Omega^{j-1}.\]
Note that unlike the De Rham complex, the MdP complex need not be acyclic
locally where $\Pi$ degenerates,  indeed its local cohomology
seems difficult to compute in general. Some special cases 
will be computed below.
\par
A special feature of the MdP complex $\Theta^\bullet_X$,
on a $2n$-dimensional 
($C^\infty$ or holomorphic) Poisson manifold
$(X, \Pi)$, not shared by Brylinski's complex and
 which makes $\Theta^\bullet_X$ amenable to study, is the
existence of 'bonding' maps relating it to the de Rham complex:
\[\pi:\Theta^\bullet_{X, \leq  n}\to \Omega^\bullet_{X, \leq n},\]
\[\pi':\Omega^\bullet_{X, \geq n}\to\Theta^\bullet_{X, \geq  n} .\]

It is also possible to construct  a pair of hybrid complexes on the top or bottom half of the de Rham groups:
\[\ed^\bullet: \Omega^{2n}_X\stackrel{\delta_n}{\to}\Omega^{2n-1}_X...\Omega^{n+i}_X\stackrel{\delta_i}{\to}
\Omega^{n+i-1}_X...\to \Omega_X^{n+1}
\stackrel{\delta_1}{\to}\Omega^n_X\stackrel{d}{\to}\Omega^{n+1}_X...\stackrel{d}{\to}\Omega^{2n}_X,
\]
\[\de^\bullet: 
\O_X\stackrel{d}{\to}\Omega^1_X...\stackrel{d}{\to}\Omega^n_X\stackrel{\delta_0}{\to}\Omega^{n-1}_X...
\Omega^{n-i}_X\stackrel{\delta_{-i}}{\to}\Omega^{n-i-1}_X...\to \Omega_X^1\stackrel{\delta_{-n+1}}{\to}\O_X,
\]
together with  a map of complexes
\[\pi:\ed^\bullet\to\de^\bullet.\]
%\[\pipi:\Theta^\bullet\to \Omega^\bullet.\]
The mapping cone of  $\pi$   may be thought of as a 'double helix'
with strands $\ed^\bullet$ and $\de^\bullet$ or $\Theta^\bullet$ and $\Omega^\bullet$
\footnote{The bonds between the two strands of the DNA molecule
are called $\pi$ bonds}.\par
In the case where $\Pi$ is  pseudo-symplectic, i.e.
nondegenerate almost everywhere, hence
degenerates along a Pfaffian divisor $P$, these complexes
are closely related  to a (singular) 
codimension-1 'kernel foliation' on $P$ (also called 'symplectic foliation"
in the literature).
In general, this kernel foliation is not  'tame' in the sense that the leaves are Zariski-locally closed
(see Example \ref{toric} below).
In fact, leaves can be dense in $P$.\par
When $X$ is a compact K\"ahler manifold, the cohomology of
the MdP complex admits a Hodge decomposition like its De Rham 
analogue. Indeed the 'Hodge diamond' for $\Theta^\bullet$ is just
a $90^\circ$ turn of the usual.\par
We will concentrate mainly on the case where $\Pi$ is log-symplectic, i.e. the degeneracy
divisor $D=D(\Pi)$ has normal crossings. In that case $\Pi$ corresponds to a log-symplectic form
$\Phi$, i.e.  a closed log 2-form whose polar locus coincides with $D$.
For certain purposes it is easier to work directly with $\Phi$ rather than
$\Pi$.
In the case where the log-symplectic structure $\Pi$ satisfies a 
certain 'residual generality' condition (see \S
\ref{rg-sec}), we will study the image of $\pi$ via the corresponding
log-symplectic form $\Phi$ and consequently we will be able to determine the image of $\pi$
via a simplicial resolution, and hence determine the local cohomology
of $\Theta^{n]}$, i.e. the 'upper half' of $\Theta^\bullet$. Curiously, the
method does not seem to adapt easily to the case of the lower
half $\Theta^{n]}$.\par

For other work on De Rham-like complexes and degeneracy of 
log-symplectic Poisson structures,  see \cite{polishchuk-ag-poisson},
\cite{kontsevich-gen-tt}, \cite{ka-kontsevich-pa},
and \cite{lima-pereira}.
In particular, Polishchuk \cite{polishchuk-ag-poisson} constructs
and analyzes
a different differential complex on a Poisson manifold with normal crossings
Pfaffian divisor.\par
I am grateful to the referee for numerous helpful comments.
\subsection*{Notations and conventions}
We work over $\C$. For a natural number $k$, $\mathbf k$ denotes
$\{1,...,k\}$. For a multi-index $I=(i_1<...<i_r)$, $|I|$ denotes the degree, i.e. $r$.
Note the difference between $"i"$, used for indices, and $"\iota"$, used for inclusion maps.
\section{Preliminaries: twisted log complexes}
In this section we study some differential complexes attached to a general
log pair $(X, D)$, i.e. a complex manifold endowed with a reduced, locally normal-crossing
divisor. No Poisson, symplectic or log-symplectic structure is assumed.
\subsection{Minor log complex and compactly supported cohomology}
Here we study certain twists of the
log complex on a log pair $(X, D)$.
Let $X$ be a complex manifold of dimension $d$
endowed with a divisor $D$ with local
normal crossings.
 We remark that in our 
 subsequent application,
 $X$ will be $X_1$, normalization of the degeneracy divisor $D(\Pi)$ of of a 
 log symplectic manifold $(X, \Pi)$, and $D$ will be the double point locus
 of the map $X_1\to D(\Pi)\subset X$.  \dbend This will result in a shift of indices!!
\vskip .5cm
Via the inclusion $\Omega_X^\bullet\llog{D}\subset\Omega^\bullet_X(D)$,
we get a graded subgroup
\eqspl{}{
\Omega^\bullet_X\llogm{D}:=\Omega_X^\bullet\llog{D}(-D)\subset\Omega_X^\bullet.
}
Locally, letting $F=x_1...x_k$ be an equation for $D$,
$\Omega^\bullet_X\llogm{D}$ is generated by differentials of the following form,
in which $\mathbf k$ denotes $\{1,...,k\}$ and $J=(j_1<...<j_r)$ is a multi-index:
\[\omega_{J, \mathbf k}=\prod\limits_{j\in\mathbf{k}\setminus J}x_j
\prod\limits_{j\in J}dx_j, \forall J\subset\mathbf k, k\leq d.\]
 It is clear from this, or otherwise, that
$(\Omega^\bullet_X\llogm{D}, d, \wedge)$ is a dg algebra over $\Omega^\bullet_X\llog{D}$,
called the \emph{log-minus } or \emph{minor log} complex associated to $D$. Given
the equation $F$ as above, $\Omega^\bullet_X\llogm{D}$
can be identified with
$\Omega^\bullet_X\llog{D}^\circ$ which is  $\Omega^\bullet_X\llog{D}$ with 
twisted differential
\[d^\circ=d+\dlog(F).\]
\par
 
\begin{lem}\label{logm-lem}
 The log-minus complex $\Omega_X^\bullet\llogm{D}$ is 
a resolution of the compact-support direct image
$\C_{U!}:=i_{U!}(\C_U)$ where $i_U:U\to X$ is the inclusion 
of $U=X\setminus D$.
%(ii) We have
%\[\H^i(\Omega^\bullet_\psi)=\Omega_X^{i+1}\llog{D}.\]
\end{lem}
\begin{proof}
There is a natural map $\C_{U!}\to\O(-D)$ which lifts to a map
$\C_{U!}\to \Omega^\bullet_X\llogm{D}$ and the latter
is clearly a quasi- isomorphism over $U$, so it suffices to prove
that $ \Omega^\bullet_X\llog{D}^\circ$ is exact locally
at every point
of $D$.
To simplify notations we assume $D$ is of maximal multiplicity $k=d$
at the given point;
the general case is a product of a maximal-multiplicity case
and a zero-multiplicity case, and one can use a K\"unneth decomposition.
Then all the sheaves in the log complex decompose into homogeneous
components $S^i_{(m.)}$ indexed by 
exterior degree  $i$ and multi-indices $(m.)$ where $m_i\geq 0$.
That is, each local section is an infinite convergent sum
of homogeneous components.
Note that $x_i$ and $dx_i$ both have degree 1. 
For any multi-index $(m_1,...,m_k)$, we set
\[\chi_{(m.)}=\sum m_idx_i/x_i.\]
Now note that
$d^\circ$ maps $S^i_{(m.)}$ to $S^{i+1}_{(m.)}$ and there, in fact,
is given by multiplication by $\chi_{(m.)}+\chi_{(1.)}$ where
$(1.)=(1,...,1)$. Because the latter form is part of a basis of
$S^1_{(0.)}\subset \Omega^\bullet_X\llog{D}$, 
multiplication by it clearly defines an
exact complex and in fact admits a
'homotopy' operator $\iota_v$ given by interior
multiplication by the log vector field
\[v=\sum x_i\del_{x_i}.\]
This has the property that that the commutator 
$[\iota_v, d^\circ]$ is a nonzero multiple
of the identity on each $S^i_{(m.)}$ term.
Therefore $\Omega^\bullet_X\llog{D}^\circ$ is null-homotopic
and exact.
\end{proof}
\begin{rem}
When $D$ is the exceptional divisor of a resolution of singularities,
the minor log complex seems related to the Du Bois
complex of the singularity, see
\cite{steenbrink-dubois}.
\end{rem}
\subsection{Augmented minor log complex}
\label{logmp-sub-sec}
%\subsubsection{Full}\label{logmp-full-sec}
We shall need an enlargement of the log-minus complex along
the double locus of $D$, called the \emph{augmented minor} or 
\emph{log-minus-plus} complex. Let
\[\nu_i:X_i\to D\subset X\]
be the normalization of the $i$-fold locus of $D$, and let $D_i\subset X_i$ be the
natural normal-crossing divisor on $X_i$, which maps to the $(i+1)$-fold locus of
$D$. 
Also set $U_i=X_i\setminus D_i$. This maps to the set of points of
multiplicity exactly $i$ on $D$.
Note the natural surjective pullback map
for all $i\geq 0$, where $X_0=X$ etc.
\[\nu_i^* :\Omega_{X_i}^\bullet\to \nu_{i+1*}\Omega_{X_{i+1}}^\bullet\]
whose kernel is just $\Omega^\bullet_{X_i}\llogm{D_i}$.
We denote by $Z^{[m}$ the truncation of a complex $Z^\bullet$ below degree $m$
(thus $Z^i=0, i<m$).
Set
\eqsp{K_0=\Omega^\bullet_X\llogm{D},\\
K_1=(\nu_1^*)\inv(\Omega_{X_1}^{[1}\llog{D_1}(-D_1)).}
Thus, $K_1$ is a subcomplex of $\Omega_X^\bullet$ which coincides with
$K_0$ off $X_1$ and which, locally at a point of $X_1$ with branch equation $x_k$, 
where $D$ has equation $F=x_1...x_k$, 
is generated by $\Omega_X^\bullet\llogm{D}$ and by differentials
of degree 1 or more, of the form
\[\omega_{I, \l,\mathbf{k}}=
\prod\limits_{i\in\mathbf{k}\setminus  I, i\neq\l}x_i\prod\limits_{i\in I}dx_i=\dlog{(x)}_IF/x_\l, I\subset\mathbf {k\setminus\{\l\}}.\]
In the general case, assuming $K_i$ is constructed, we construct $K_{i+1}$
by modifying $K_i$ along $X_{i+1}$ for forms of degree $i+1$ or more, i.e.
\[K_{i+1}=(\nu_{i+1}^*)\inv(\Omega_{X_{i+1}}^{[i+1}\llog{D_{i+1}}(-D_{i+1})).\]
Finally set
\eqspl{}{
\Omega^\bullet_X\llogmp{D}=K_{d-2}.
} (note that $K_{d-2}=K_{d-1}$ because 
$\Omega^\bullet_{X_{d-1}}\llog{D_{d-1}}(-D_{d_1})=\Omega^\bullet_{X_{d-1}}$).
By construction, $\Omega^\bullet_X\llogmp{D}$ is a $\Omega^\bullet_X\llog{D}$-
module endowed with an increasing filtration $F_\bullet$
with graded pieces 
\[Gr_i^{F_\bullet}(\Omega^\bullet_X\llogmp{D})=\Omega_{X_i}^{[i}\llogm{D_i}.\]

%/************
%\eqspl{minusplus}{
%\Omega_X^\bullet\llogmp{D}:=(\nu_1^*)\inv(\Omega_{X_1}^\bullet\llog{D_1}(-D_1)).
%}
%Thus we have an exact sequence
%\eqspl{logmp}{
%\exseq{\Omega_X^\bullet\llogm{D}}{\Omega_X^\bullet\llogmp{D}}{\nu_{1*}\Omega_{X_1}^\bullet\llogm{D_1}}.
%}**************8/

Locally at a point of $X_r$ with branch equations $x_j, j\in J, |J|=r$,
where $D$ has equation $x_{\mathbf k}$, 
$\Omega^\bullet_X\llogmp{D}$ is generated over $\Omega^\bullet_X\llog{D}$
by differentials of the form
\eqspl{}{
\omega_{I,J, \mathbf k}=\dlog(x)_IF/x_J, I\subset \mathbf k\setminus J 
} as well as differentials on $X_{r-1}$ whose pullback on $X_r$ is of such form.
%where $D$ has equation $F=x_1...x_k$, $\Omega_X^\bullet\llogmp{D}$
%is generated by $\Omega_X^\bullet\llogm{D}$ and by differentials of the form
%\[\omega_{I, \l,\mathbf{k}}=
%\prod\limits_{i\in\mathbf{k}\setminus  I, i\neq\l}x_i\prod\limits_{i\in I}dx_i=\dlog{(x)}_IF/x_\l, I\subset\mathbf {k-\l}.\]
%In particular, the degree-0 part of 
%$\Omega^\bullet_X\llogmp{D}$ is just the ideal of the image of $D_1$
%in $X$ and consequently if we set $U_1=X\setminus D_1$ then there a natural map
%\[\C_{U_1!}:=i_{U_1!}(\C_{U_1})\to  \Omega^\bullet_X\llogmp{D}.\]
%Note also that $\Omega_X^\bullet\llogmp{D}$ is again a dg algebra over
%$\Omega^\bullet_X\llog{D}$.
\par
%/**********
%As a slight generalization of the log-minusplus complex, we have for any 
%$s\geq 0$ a complex $\Omega^\bullet_X\llogmps{D}{s}$ defined as above but with
%\[K_{i+1}=(\nu_{i+1}^*)\inv(\Omega_{X_{i+1}}^{[i+1-s}\llog{D_{i+1}}(-D_{i+1}), i+1\geq s.\]
%We will need this only for $s=1$ which yields a complex with zeroth term 
%$\O_X(-\nu_1(D_1))$
%(recall that $D_1$ is a divisor on $X_1$ which maps to a codimension-2 locus on $X$).
%*****************/
\par
As in Lemma \ref{logm-lem}, we can compute the cohomology
of the augmented minor log complex:
\begin{lem}\label{logmp-hom-lem}
We have
\eqspl{}{
\H^i(\Omega_X^\bullet\llogmp{D})
=\hat\Omega^i_{U_i!}:=i_{U_i!}(\hat\Omega_{X_i}^{i}),  i\geq 0.
} where $\hat\Omega$ denoted closed forms.\par
%/**************
%(ii) If $s>0$ we have an exact sequence
%\[\exseq{\C_{U_0!}}{\H^0(\Omega_X^\bullet\llogmps{D}{s})}{\C_{U_s!}}\]
%and moreover
%\[\H^i(\Omega_X^\bullet\llogmps{D}{s})=i_{U_{s+i!}}(\hat\Omega_{U_{s+1}}^i), i>0\]
%***************/
%
%/**************
%The log -minusplus complex $\Omega_X^\bullet\llogmp{D}$ 
%is a resolution of the compact-support
%direct image $\C_{U_1!}$./
\end{lem}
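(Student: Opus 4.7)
The plan is to use the increasing filtration $F_\bullet$ on $\Omega_X^\bullet\llogmp{D}$ described just before the statement, whose graded quotients are
\[
Gr_p^{F_\bullet}\bigl(\Omega_X^\bullet\llogmp{D}\bigr) = \Omega^{[p}_{X_p}\llogm{D_p},
\]
and to analyze the associated first-quadrant spectral sequence
\[
E_1^{p,q} = \H^{p+q}(Gr_p^{F_\bullet}) \Longrightarrow \H^{p+q}\bigl(\Omega_X^\bullet\llogmp{D}\bigr).
\]

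First, I would compute the cohomology of each graded piece via Lemma \ref{logm-lem} applied to the pair $(X_p, D_p)$: the full log-minus complex $\Omega^\bullet_{X_p}\llogm{D_p}$ is a resolution of $\C_{U_p!}$, hence exact in positive degrees. A short diagram chase then shows that its stupid truncation $\Omega^{[p}_{X_p}\llogm{D_p} = Gr_p^{F_\bullet}$ has vanishing cohomology outside degree $p$, and that $\H^p(Gr_p^{F_\bullet})$ equals the subsheaf of closed forms in $\Omega^p_{X_p}\llogm{D_p}$. In particular $E_1^{p,q}$ is supported on the row $q = 0$, so the spectral sequence degenerates at $E_2$ and the problem reduces to computing the cohomology of the horizontal complex $(E_1^{\bullet,0}, d_1)$.

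Next I would identify the $d_1$ differential as the natural connecting homomorphism coming from the filtration. Using the explicit local generators $\omega_{I, J, \mathbf{k}} = \dlog(x)_I F/x_J$ of $\Omega^\bullet_X\llogmp{D}$ together with the surjection $\nu_p^* :\Omega^\bullet_{X_p}\to \nu_{p+1*}\Omega^\bullet_{X_{p+1}}$ whose kernel is $\Omega^\bullet_{X_p}\llogm{D_p}$, this map unwinds to an alternating sum of Poincar\'e residue maps along the inclusions $X_{p+1} \hookrightarrow X_p$. The cohomology of the resulting residue complex can then be computed locally at each point via a variation of the argument of Lemma \ref{logm-lem}: choose branch equations $x_1,\dots,x_r$ for $D$ at the point, decompose sections into multi-homogeneous components in these variables, and invoke the contracting homotopy $\iota_v$ with $v = \sum x_j \del_{x_j}$. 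Components of nonzero total multi-degree are killed by the homotopy, and the surviving exceptional components are exactly $i_{U_p!}\hat\Omega^p_{X_p}$.

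The hard part will be the combinatorial bookkeeping of signs and Poincar\'e residues in the identification of $d_1$ with the simplicial-residue differential on the tower $X_0 \supset X_1 \supset \cdots$, and checking that the kernels and cokernels of $d_1$ assemble into the claimed stratum-by-stratum structure $\bigoplus_i i_{U_i!}\hat\Omega^i_{X_i}$, rather than spreading across multiple strata. This should ultimately reduce to a Koszul-type computation in the multi-homogeneous components, parallel in spirit to the computation in Lemma \ref{logm-lem} but indexed over the incidence data of the normal-crossing configuration.
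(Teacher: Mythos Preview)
Your setup is the same as the paper's: use the increasing filtration $F_\bullet$ with $Gr_p=\Omega^{[p}_{X_p}\llogm{D_p}$, and compute $\H^\bullet(Gr_p)$ via Lemma~\ref{logm-lem}. You correctly find that $Gr_p$ has cohomology concentrated in the single degree $p$. But from that point on you overcomplicate matters, and in fact misidentify $d_1$.

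For an increasing filtration, the $d_1$ differential is the connecting homomorphism
\[
d_1:\H^n(Gr_p)\longrightarrow \H^{n+1}(Gr_{p-1})
\]
coming from $0\to Gr_{p-1}\to F_p/F_{p-2}\to Gr_p\to 0$; it \emph{decreases} the filtration index. With $n=p$ this lands in $\H^{p+1}(Gr_{p-1})=0$. So $d_1$ (and likewise every $d_r$) vanishes for pure degree reasons, and the spectral sequence degenerates at $E_1$, not $E_2$. This is exactly the one-line argument the paper gives. The ``hard part'' you anticipate---residue maps from $X_p$ to $X_{p+1}$, sign bookkeeping, a Koszul-type computation---never arises.

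Your description of $d_1$ as an alternating sum of Poincar\'e residues along $X_{p+1}\hookrightarrow X_p$ goes in the wrong direction for this filtration; it looks like you are conflating the filtration spectral sequence with the simplicial resolution of \S\ref{simplicial de rham sec}. Once you see that $\H^j(Gr_p)=0$ for $j\neq p$, the abutment $\H^i(\Omega_X^\bullet\llogmp{D})$ has exactly one nonzero graded piece, namely $\H^i(Gr_i)=\ker\bigl(d:\Omega^i_{X_i}\llogm{D_i}\to\Omega^{i+1}_{X_i}\llogm{D_i}\bigr)$, and the proof is complete.
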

\begin{proof}
Use the spectral sequence of the filtered complex with
$E_1^{p,q}=H^{p+q}(Gr_{-p})$, together with Lemma
\ref{logm-lem}. The fact that each  $i$th graded piece has cohomology only in degree $i$ 
ensures that the spectral sequence degenerates at $E_1$.

%/*********************
%It suffices to prove acyclicity at points of $D_1$ and this follows from Lemma
%\ref{logm-lem} (applied to both $X$ and $X_1$, and the exact sequence \eqref{logmp}.
%******************/
\end{proof}
As a slight generalization of the log-minusplus complex, we have for any 
$s\geq 0$ a complex $\Omega^\bullet_X\llogmps{D}{s}$ defined as above but with
\[K_{i+1}=(\nu_{i+1}^*)\inv(\Omega_{X_{i+1}}^{[i+1-s}\llog{D_{i+1}}(-D_{i+1})), i+1\geq s.\]
We will need this only for $s=1$ which yields a complex with zeroth term 
$\O_X(-\nu_1(D_1))$
(recall that $D_1$ is a divisor on $X_1$ which maps to a codimension-2 locus on $X$).
Note $\Omega^\bullet_X\llogmps{D}{s}$ admits an increasing filtration with graded
pieces $\Omega^\bullet_{X_i}\llogmp{D_i}, i=0,...,s$.
%/***********
%\subsubsection{Sub}\label{logmp-sub-sec}
%This construction generalizes as follows. Let $L^\bullet$ be a differential ideal in
%$\Omega^\bullet_X\llog{D}$, i.e. an exterior ideal closed under $d$. 
%An example of such an ideal $L^\bullet$ is the ideal generated by a collection
%of closed log forms $\psi_i$, i.e.
%\[L^\bullet=\sum \psi_i\Omega^\bullet_X\llog{D}.\]
%Then $L^\bullet(-D)$
%is a subcomplex of $\Omega^\bullet_X\llogm{D}$ due to the identity
%\[d(F\alpha)=Fd\alpha+F\dlog(F)\wedge\alpha.\]
%Likewise, the image $L^\bullet_i$ of $L^\bullet$ 
%in $\Omega_{X_i}^\bullet\llog{D_i}$ gives rise to
%a subcomplex $L_i(-D_i)\subset\Omega_{X_1}^\bullet\llogm{D_i}$. Therefore
%the same construction works, giving rise to a complex $L^\bullet_\mp$, which
%admits an increasing filtration with graded pieces $L_i^{[i}(-D_i)$.
%*******/
\subsection{Foliated De Rham complex, log version}
With $(X, D)$ a log pair as above, let $\psi$ be a closed log 1-form,
nowhere vanishing as such.
Then $\psi$ generates an $\Omega^\bullet_X\llog{D}$-submodule 
%$\O(-D)\psi\subset \Omega^1_X$, or locally
%$\O\psi F\subset \Omega^1_X$ where
%$F=x_1...x_k$ is a local equation of $D$. Let
\eqspl{omega-psi}{
\Omega^\bullet_\psi=\psi \Omega^\bullet_X\llogm{D}\subset  \Omega^\bullet_X\llogm{D}[1]}
This is locally the $\Omega_X^\bullet
\llog{D}$-submodule of $\Omega^\bullet_X$ generated by
$\psi F$. 
Thus, sections of $\Omega^\bullet_\psi$
are of the form $\psi F\gamma$ where $\gamma$ is a log form.
Then
\eqspl{}{
\Omega^\bullet_{X/\psi}:=\Omega^\bullet_X/\Omega^\bullet_\psi[-1]
} is called the \emph{foliated De Rham complex} associated to $\psi$.
The differential on $\Omega^\bullet_\psi$ is given by
\[d(\psi F\alpha)=\psi F(d\alpha+\dlog(F)\alpha).\]
Consequently, $\Omega^\bullet_\psi$ is a quotient of   
$\Omega^\bullet_X\llogm{D}^{d-1]}$
 where $d=\dim(X)$ and $\bullet^{d-1]}$ means truncation
in degrees $>d-1$. Locally, choosing an equation
 $F$ for $D$, we may identify $\Omega^\bullet_X\llogm{D}$
as above with the  complex
denoted $\Omega^\bullet_X\llog{D}^\circ$ which is  $\Omega^\bullet_X\llog{D}$ with differential
\[d^\circ=d+\dlog(F).\]
This is defined locally, depending on the choice of local equation $F$.
The kernel of the natural surjection 
$\Omega^\bullet_X\llogm{D}^{d-1]}\to
\Omega^\bullet_\psi$ consists of the forms divisible by $\psi$, hence
can be identified with $\Omega^\bullet_\psi[-1]^{d-1]}$.

Continuing in this manner, $\Omega^\bullet_\psi$ admits a left resolution of the form
\small
\eqspl{resolution}{
\Omega^\bullet_X\llogm{D}[-d+1]^{d-1]}\to...
...
\to\Omega^\bullet_X\llogm{D}[-1]^{d-1]}
\to\Omega^\bullet_X\llogm{D}^{d-1]}\to\Omega^\bullet_\psi
}

\normalsize
Note that $*[-i]^{d]}=(*^{d-i]})[-i]$.
Set
\[K^i=\ker(d^\circ, \Omega^i_X\llog{D}), i\geq 0.\]
By Lemma \ref{logm-lem}, 
\[K^i\simeq\Omega^{i-1}_X\llog{D}/K^{i-1}, i\geq 1.\]
Locally at a point of $D$, the latter is true for $i=0$ as well,
in the sense that $K^0=0$ while locally at a point of $U$, $K^0=\C$.
Moreover $K^i$ is the unique nonvanishing cohomology
sheaf of $\Omega^\bullet_X\llog{D}[-i]^{d]}$, and occurs as $\H^d$.
We now introduce the following generality hypothesis on our form $\psi$:\par
(*) \emph{For each nonnegative integral multi-index $(m.)$, the log differentials
$\psi$ and $\chi_{(m.)}+\chi_{(1.)}$ are  linearly independent,
i.e. generate a free and cofree subsheaf of $\Omega^1_X\llog{D}$,
locally at every point of multiplicity 2 or more on $D$.}
\par
When $\psi$ is one of the forms $\psi_i$ deduced from a log-symplectic structure,
hypothesis (*) is equivalent to the '1-very general' hypothesis introduced in
\cite{logplus}, Erratum, hence weaker than the Residual Generality
condition in \S \ref{rg-sec}. \par
%This hypothesis is closely related to the 'general position' hypothesis
%introduced in \cite{logplus}. It is always satisfied for the forms $\psi_i$ deduced from
%a log-syplectic form satisfying the Residual Generality condition, see
%\S \ref{rg-sec}.\par
It is essentially clear that a general log 1-form cannot 
be holomorphically integrated and in 2 or more variables, is not even
proportional to an integrable form. Our aim next is
to generalize this observation.\par
Let let $\O_X^\psi$ denote the sheaf of $\psi$-constant holomorphic
functions, i.e. holomorphic functions $g$ such that $dg\wedge\psi=0$.
Locally at $p\in U$ we can write $\psi=dx, \O^\psi_p=\C\{x\}$ for a
 coordinate $x$. Similarly, locally over $U_1$, the smooth part
 of $D$, we can write $\psi=dx/x, \O^\psi_p=\C\{x\}$.
 Also let $U_j\subset X$ denote the set of points where $D$ has
 multiplicity $\leq j$. More generally, we let $U_{i,j}\subset X_i$ denote
 the set of points where $D_i$ has multiplicity $\leq j-i$
 , i.e. the inverse image of the
  set of point in $X$ where the multiplicity of $D$ is in $[i,j]$. 
  Thus, $U_j=U_{0,j}$.

\begin{lem}\label{omega-psi-lem}\label{foliated-dr-lem}
Under hypothesis (*) above, we have
\eqspl{omega-psi}{
\H^i(\Omega^\bullet_\psi)=
\begin{cases}
%\O_X\psi, i=0;\\
i_{U_1!}(\O^\psi(-D)\psi), i=0;\\
0, i>0.
\end{cases}
}
\end{lem}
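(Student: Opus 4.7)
The argument is local around $p\in X$ with $m:=\mathrm{mult}_p D$; I choose coordinates so that $D=\{x_1\cdots x_m=0\}$ and $F=x_1\cdots x_m$. My first move is to rewrite $\Omega^\bullet_\psi$ as a Koszul-type quotient. The map $\alpha\mapsto\psi\wedge F\alpha$ defines a chain surjection $(\Omega^\bullet_X\llog{D},\pm d^\circ)\twoheadrightarrow\Omega^\bullet_\psi$ with $d^\circ=d+\dlog F\wedge$ as in Lemma~\ref{logm-lem}, whose kernel is exactly $\psi\wedge\Omega^{\bullet-1}_X\llog{D}$ because $\psi$, being nowhere vanishing as a log $1$-form, extends locally to a log frame. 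This gives
\[
\Omega^\bullet_\psi\;\cong\;\bigl(\Omega^\bullet_X\llog{D},\pm d^\circ\bigr)\big/\bigl(\psi\wedge\Omega^{\bullet-1}_X\llog{D}\bigr),
\]
reducing the lemma to a concrete cohomological computation.

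Next I normalize $\psi$. Writing $\psi=\sum_{i\le m}c_i\,dx_i/x_i+df$ with $c_i=\Res_{\{x_i=0\}}\psi\in\C$ (the sum empty when $m=0$), I split into subcase~(A) when some $c_i\ne 0$, and subcase~(B) when all $c_i=0$. In~(A), the coordinate change $x_i\mapsto x_i\exp(f/c_i)$ absorbs $df$ and modifies $F$ only by a unit, making $\psi=\sum c_i\,dx_i/x_i$, of $(\C^*)^d$-grading zero. In~(B), the log-nonvanishing of $\psi=df$ forces $\partial_j f(p)\ne 0$ for some $j>m$; taking $x_{m+1}=f$ yields $\psi=dx_{m+1}$. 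For $m\le 1$, the normalized $\psi$ is simple enough that I finish by direct calculation via a relative Poincar\'e-type argument: $\H^0$ equals $\O^\psi(-D)\psi$ ($=\O^\psi\psi$ at points of $U$, $=c_1\C\{x_1\}\,dx_1$ at a mult-$1$ point in subcase~(A), and $=0$ in subcase~(B) at a mult-$1$ point because $\O^\psi\cap\O(-D)=0$ there), while $\H^{>0}=0$.

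For $m\ge 2$, I reduce via a K\"unneth splitting $X=X'\times X''$ near $p$, where $X'$ carries $x_1,\ldots,x_m$ (so $D|_{X'}$ is of maximal multiplicity at the origin) and $X''$ is smooth. The log complex factors as $\Omega^\bullet_X\llog{D}=\Omega^\bullet_{X'}\llog{D'}\boxtimes\Omega^\bullet_{X''}$, and the normalized $\psi$ is pulled back from exactly one factor ($X'$ in~(A), $X''$ in~(B)); $\Omega^\bullet_\psi$ correspondingly factorizes as a tensor product of complexes. By K\"unneth, it suffices to verify vanishing of the $X'$-factor at the origin of $X'$. In subcase~(B) that factor is $\Omega^\bullet_{X'}\llogm{D'}$, whose cohomology is $\C_{U'!}$ in degree $0$ by Lemma~\ref{logm-lem}; its stalk at the origin of $X'$ vanishes since $m\ge 2$. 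In subcase~(A) the factor is $\Omega^\bullet_{\psi_1}$ on $X'$ with $\psi_1=\sum c_i\,dx_i/x_i$, of $(\C^*)^m$-grading zero; I decompose this complex into the graded pieces $S^i_{(m.)}$ used in the proof of Lemma~\ref{logm-lem}. On each piece $d^\circ$ acts by $(\chi_{(m.)}+\chi_{(1.)})\wedge$ and $\psi_1\wedge$ preserves the grading, so the quotient reduces to a finite-dimensional Koszul-type complex $\bigl(\Lambda^\bullet V,(\chi_{(m.)}+\chi_{(1.)})\wedge\bigr)\big/\psi_1\wedge\Lambda^{\bullet-1}V$ with $V=\Omega^1_{X'}\llog{D'}|_0$. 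Hypothesis~(*) says $\psi_1$ and $\chi_{(m.)}+\chi_{(1.)}$ are linearly independent in $V$ for every $(m.)$, so each graded piece becomes the Koszul complex of a nonzero vector on the complementary exterior algebra, which is exact in every degree.

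The main technical subtlety I expect is verifying that the K\"unneth factorization is genuinely compatible with the Koszul-type quotient by $\psi\wedge$ (tensoring subcomplex by subcomplex under $\boxtimes$), and that hypothesis~(*) transfers cleanly through the coordinate normalization in subcase~(A); both are routine bookkeeping once the decomposition is in place.
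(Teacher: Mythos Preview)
Your argument is correct and reaches the same Koszul endpoint as the paper, but you package it differently. The paper resolves $\Omega^\bullet_\psi$ on the \emph{left} by the sequence \eqref{resolution} of shifted truncated log-minus complexes (maps given by $\psi\wedge$), runs the associated spectral sequence, and lands on the complex $K^1\to K^2\to\cdots$ of $d^\circ$-kernels with differential $\psi\wedge$; hypothesis~(*) then gives exactness because $\psi$ and $\chi_{(m.)}+\chi_{(1.)}$ are independent. You instead identify $\Omega^\bullet_\psi$ directly as the \emph{quotient} $(\Omega^\bullet_X\llog{D},d^\circ)/\psi\wedge$, use K\"unneth to reduce to the maximal-multiplicity factor $X'$, and observe that on each graded piece the quotient is $(\Lambda^\bullet(V/\C\psi),\bar\chi\wedge)$, a Koszul complex on a nonzero vector. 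Your route is shorter, avoids the spectral sequence of the resolution, and is more careful on two points the paper treats loosely: the reduction to maximal multiplicity (the paper just says ``for simplicity we assume\ldots'') and the case where all residues of $\psi$ vanish (your subcase~(B)), which the paper's normalization step $x_1\mapsto e^{g/a_1}x_1$ tacitly excludes. The point you flag as a subtlety---that the K\"unneth factorization respects the quotient by $\psi\wedge$ and that (*) survives the normalization---is indeed routine: after normalization $\psi$ has constant residues, so at a max-multiplicity point its image in the log cotangent fiber is unchanged, and (*) is a condition on that fiber.
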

\begin{proof}
To begin with, the RHS of \eqref{omega-psi} clearly maps naturally
to the LHS, so it suffices to prove that this map is an isomorphism
locally at each point.\par
Consider first the elementary case of a point $p\in U$. There the
 quotient complex $\Omega^\bullet_{X/\psi}$ is the usual relative
 De Rham for the foliation determined by $\psi$, which is a resolution
 of $\O_X^\psi$. Then the cohomology sequence
 of \eqref{quotient}
% \[\exseq{\Omega^\bullet_\psi[-1]}{\Omega^\bullet_X}
% {\Omega^\bullet_{X/\psi}}\] 
 reduces to
 \[\exseq{\C_X}{\O^\psi}{\O_X.\psi},\]
 the second map being exterior derivative,
so  we get the result. The case $p\in U_1$ is
 similar, because there we may assume $\psi=dx/x, F=x$ so $F\psi=dx$
 like before.\par
 
 Now we may assume $p\in D_1$, double locus of $D$, and show $\Omega^\bullet_\psi$
 is exact.We will use hypothesis (*), which says that $\psi$ and 
 $\chi:=\chi_{(m.)}+\chi_{(1.)}$
 are linearly independent at $p, \forall (m.)$. For simplicity we assume $p$
 is a point of maximum multiplicity, i.e. $d$, on $D$. The closed log 1-form
 $\psi$ can be written in the form
 \[\psi=\sum a_i\dlog{(x_i)}+ dg\]
 with $a_i$ constant and $g$ holomorphic. Then replacing $x_1$ by
 $\exp(g/a_1)x_1$, we may assume $g=0$ In particular, $\psi$ is homogeneous
 of degree 0.\par 

Consider the $E_1$ 'termwise-to-total' spectral sequence associated
to the resolution \eqref{resolution}. Each resolving term only has $\H^{d-1}$
and that is given by the appropriate $K^i$. Therefore the entire
$E_1$ page reduces to the following complex (occurring at height
$d-1$ in the second quadrant)
\eqspl{K}{K^1\to K^2\to...\to K^d\to 0}
where the maps are multiplication by $\psi$.
We claim that the larger complex
\[0\to K^0\to K^1\to...\to K^d\to 0\]
is exact. 
%This complex is ***
%
%
Now working on a given homogeneous component $S^\bullet_{(m.)}$,
$d^\circ$ itself is multiplication by $\chi_{(m.)}+\chi_{(0.)}$ By 
Assumption (*), the latter section together with $\psi$
forms part of a basis of $S^1_{(0.)}$. Therefore clearly multiplication
by $\psi$, which preserves multidegree,
 is exact on the kernel (= image) of multiplication by
$\chi_{(m.)}+\chi_{(0.)}$, i.e. $K^.$. Therefore the larger complex extending \eqref{K}
is exact.

Thus, the $E_2=E_\infty$ 
page for the complex \eqref{K} just reduces to the $K^0$, sitting in bidegree $(-d, d)$, which yields 
our claim.
\end{proof}
As an immediate consequence of Lemma \ref{omega-psi-lem}, we conclude
\begin{cor}
Under hypothesis (*), we have
\[\H^i(\Omega_{X/\psi}^\bullet)=\H^i(\Omega^\bullet_\psi), i>0,\]
and there is an exact sequence
\eqspl{omega/psi}{
\exseq{\C_X}{\H^0(\Omega_{X/\psi}^\bullet)}{i_{U_1!}(\O^\psi(-D)\psi)}.
}
\end{cor}
\begin{proof}
Using the long cohomology sequence of
 \eqspl{quotient}{\exseq{\Omega^\bullet_\psi[-1]}{\Omega^\bullet_X}
 {\Omega^\bullet_{X/\psi}}
 ,}
the assertion follows from Lemma \ref{omega-psi-lem}.\end{proof}
We will require a generalization of Lemma \ref{omega-psi-lem} to a $k$-tuple 
of forms. Thus, with notations as above, let $\psi_1, ...,\psi_k$
be sufficiently general closed log  1-forms on $X$ and set
\[\psi_{\ul{k}}=\psi_1...\psi_k, \Omega^\bullet_{\psi_{\ul{k}}}=\psi_{\ul{k}}\Omega^\bullet_X\llogmp{D}.\]
Let $\O^{\psi_{\ul{k}}}$ be the sheaf of holomorphic functions $f$ such that
$df\equiv 0\mod \psi_{\ul{k}}$ (i.e. such that $df$ is in the $\O_X$-module generated
by $\psi_1,...,\psi_k$).
\begin{lem}\label{omega-psi-k-lem}
Notations as above, we have
\eqspl{}{
\H^i(\Omega^\bullet_{\psi_{\ul{k}}})=\begin{cases}
i_{U_k!}(\O^{\psi_{\ul{k}}}(-D)\psi_{\ul{k}}), i=0\\
0, i>0.
\end{cases}
}
\end{lem}
\begin{proof}
The proof is by induction on $k$. The induction step is analogous to the proof of Lemma
\ref{omega-psi-lem}, with $\Omega^\bullet_{\psi_{\ul{k-1}}}$ replacing 
$\Omega^\bullet_X\llogm{D}$.
\end{proof}

We shall also need an analogue of Lemma \ref{omega-psi-lem} for the 
minusplus complex. Thus
set (cf. \S \ref{logmp-sub-sec})
\[\Omega^\bullet_{\psi+}=\psi\Omega^\bullet_X\llogmp{D}.\]
This is a complex that starts with $\psi\O_X(-D)$ in degree 0.
Set
\[\Omega^{i,\psi}=\{\alpha\in \Omega^i_{X_i}\llog{D_i}:d\alpha\equiv 0\mod\psi\}
/\psi\Omega^{i-1}_X\llog{D}.\]
Thus by definition, $\psi\Omega^{i,\psi}$ consists of the closed forms 
in $\psi\Omega^{i}_{X_i}\llog{D_i}$. Similarly with $\psi$ replaced by $\psi_{\ul{k}}$.
These complexes admit a natural increasing filtration with quotients
$\Omega_{X_k, \psi_\uk}^{[k}.$
\begin{lem}\label{psi-logmp-hom-lem}
Hypotheses as above, we have
\eqspl{omega-psi-plus}{
\H^i(\Omega^\bullet_{\psi+})=i_{U_{i,i+1}!}(\psi\Omega^{i,\psi}_{X_i}(-D_i))} 
and more generally
\eqspl{omega-psi-k-plus}{
\H^i(\Omega^\bullet_{\psi_\uk+})=i_{U_{i,i+k}!}(\psi\Omega^{i,\psi_\uk}_{X_i}(-D_i))} 

\end{lem}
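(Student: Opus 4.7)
The argument is a hybrid of Lemma~\ref{logmp-hom-lem} (filtration on the augmented minor log complex) and Lemma~\ref{omega-psi-k-lem} (cohomology of $\Omega^\bullet_{\psi_\uk}$). The plan is to equip $\Omega^\bullet_{\psi_\uk+}=\psi_\uk\,\Omega^\bullet_X\llogmp{D}$ with the increasing filtration $F_\bullet$ inherited by multiplication by $\psi_\uk$ from the filtration on $\Omega^\bullet_X\llogmp{D}$, whose graded pieces were identified as
\[
Gr_i^{F_\bullet}(\Omega^\bullet_X\llogmp{D})=\Omega_{X_i}^{[i}\llogm{D_i}.
\]
Multiplying by $\psi_\uk$ (interpreted via pullback along $\nu_i:X_i\to X$, which makes sense because $\psi_\uk$ is a log form with poles along $D$) then gives
\[
Gr_i^{F_\bullet}(\Omega^\bullet_{\psi_\uk+})=\psi_\uk\,\Omega_{X_i}^{[i}\llogm{D_i},
\]
i.e.\ the naive truncation $\sigma_{\geq i}$ of the foliated minus complex $\psi_\uk\,\Omega^\bullet_{X_i}\llogm{D_i}$ on $X_i$.

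Next I would run Lemma~\ref{omega-psi-k-lem} on each $X_i$, with $D$ replaced by $D_i$ and $\psi_\uk$ replaced by its pullback $\nu_i^*\psi_\uk$. This yields that $\psi_\uk\,\Omega^\bullet_{X_i}\llogm{D_i}$ has cohomology concentrated in degree $0$, given by $i_{U_{X_i,k}!}(\psi_\uk\,\O^{\psi_\uk}(-D_i))$, where $U_{X_i,k}\subset X_i$ is the open locus where $D_i$ has multiplicity $\leq k$. Applying the naive truncation $\sigma_{\geq i}$ then produces a complex whose only nonvanishing cohomology sits in degree $i$ and equals the space of $d$-closed sections of $\psi_\uk\,\Omega^i_{X_i}\llogm{D_i}$. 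A direct identification of these closed sections, using $d\psi_\uk=0$, with
\[
\psi_\uk\,\Omega^{i,\psi_\uk}_{X_i}(-D_i)\;\text{(supported on $U_{X_i,k}$, which in ambient notation is $U_{i,i+k}$)}
\]
gives the expected value of $\H^i$ on each graded piece.

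Finally I would invoke the spectral sequence of the filtered complex, with $E_1^{p,q}=\H^{p+q}(Gr_{-p})$. The previous step shows that each graded piece has cohomology concentrated in a single (filtration-dependent) total degree, so each column of $E_1$ has at most one nonzero entry; hence every differential $d_r$ ($r\geq 1$) vanishes and the spectral sequence degenerates at $E_1$. Summing the contributions yields exactly the stated formula \eqref{omega-psi-k-plus}, and \eqref{omega-psi-plus} is the case $k=1$.

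The main technical obstacle will be verifying that the pulled-back forms $\nu_i^*\psi_1,\dots,\nu_i^*\psi_k$ remain in sufficiently general position on $X_i$ to meet hypothesis~(*) and thus justify Lemma~\ref{omega-psi-k-lem} on $X_i$; this should follow inductively from the genericity assumption on the $\psi_j$, but needs to be checked stratum by stratum. A secondary bookkeeping point is the precise identification of the closed-form subsheaf $Z^i(\psi_\uk\Omega^\bullet_{X_i}\llogm{D_i})$ with $\psi_\uk\Omega^{i,\psi_\uk}_{X_i}(-D_i)$, which amounts to unwinding the definition of $\Omega^{i,\psi_\uk}$ together with $d\psi_\uk=0$.
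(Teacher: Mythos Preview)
Your approach is essentially the paper's own: filter $\Omega^\bullet_{\psi_\uk+}$ via the inherited filtration on $\Omega^\bullet_X\llogmp{D}$, identify the graded pieces as truncations of $\psi_\uk$-twisted minor log complexes on the strata $X_i$, compute their cohomology via Lemma~\ref{omega-psi-k-lem} (together with the case $k=1$, Lemma~\ref{omega-psi-lem}), and conclude by degeneration of the filtered spectral sequence. The paper records the degeneration as ``at $E_2$ for support reasons'' while you argue it already at $E_1$ from the single-degree concentration of each graded piece; these are compatible, and your formulation is in fact the sharper one. Your flagged technical points --- checking hypothesis~(*) persists on each $X_i$, and unwinding $Z^i(\psi_\uk\Omega^\bullet_{X_i}\llogm{D_i})\simeq \psi_\uk\Omega^{i,\psi_\uk}_{X_i}(-D_i)$ with the correct support --- are exactly the details the paper's one-line proof leaves implicit.
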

\begin{proof}
Follows from Lemma \ref{omega-psi-lem}
and Lemma \ref{omega-psi-k-lem}, using the spectral sequence for a filtered complex,
which degenerates at $E_2$ for support reasons.
\end{proof}
\subsection{Simplicial 
De Rham complex on normal crossing varieties}\label{simplicial de rham sec}
Let $D$ be a variety with local normal crossings. Thus, $D$ is locally
embeddable as a divisor in a manifold $X$ with defining equation
$F=x_1...x_m$, where $x_1,...,x_m$ are part of a local coordinate
system (we call these 'adapted' coordinates). Let $X_k$ be the 
normalization of the $k$-fold locus of $D$, with (unramified) natural map
\[p_k:X_k\to D.\]
Thus, a point in $X_k$ is specified by a $k$-tuple $I\subset\mathbf{m}$
plus a point where $x_i=0, \forall i\in I$, and $X_k$ is a transverse
union of smooth branches $X_I$ corresponding to choices of $I$. 
So $X_1$ is just the normalization of $D$ and $X_k$ generally
is the normalization of the $k$-fold locus of $D$.
Note that there is a natural map
\[\rho_k:p_{k*}\Omega^\bullet_{X_k}\to p_{k+1*}\Omega^\bullet_{X_{k+1}}\]
defined by, for any $(k+1)$-tuple $I=(i_1<...<i_{k+1})$, 
\[\rho(\omega)_I=\sum\limits_{j=1}^{k+1} (-1)^j\omega_
{I\setminus i_j}\]
where $\omega_J$ is the restriction of $\omega$ on the branch-intersection
$X_J:=(x_j:j\in J)$.
It is easy to check that this is a morphism of complexes and that
$\rho_{k+1}\circ\rho_k=0$ so we get a double complex 
$(\Omega^\bullet
_{X_{\bullet}}, d, \rho)$, which we will call the \emph{simplicial
De Rham complex} associated to $D$ or \emph{simplicial
De Rham resolution of $\Omega^\bullet_D$} (see below).\par
On the other hand, recall that we have a complex- actually dg algebra,
namely  $\Omega^\bullet_D$,
which is the quotient of $\Omega^\bullet_X$ by the exterior
ideal generated by $\O(-D)$ and its image by $d$ in $\Omega^1_X$,
i.e. locally by $F=x_1...x_m$ and 
$dF=F\sum dx_i/x_i$. The following result is probably
well known.
\begin{lem}\label{simplicial-derham-lem}
(i)
$\Omega^\bullet_{X_\bullet}$ is a resolution of $\Omega^\bullet_D$\ \ .\\
(ii) There is an exact sequence
\eqspl{}{
\exseq{\Omega^\bullet_X\llogm{D}}{\Omega^\bullet_X}{\Omega^\bullet_D}.
}
\end{lem}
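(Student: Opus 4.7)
My plan is to prove (ii) first and use it to reduce (i) to a local combinatorial computation. Both parts are local, so I work in adapted coordinates where $D$ has equation $F = x_1\cdots x_m$ in $X = \A^d$, with branches $V(x_j)$ for $j \in \mathbf{m} := \{1,\ldots,m\}$.

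For (ii), the crux is the identification
\[
\Omega^p_X\llogm{D} \;=\; \bigcap_{j\in \mathbf{m}} \ker\bigl(\Omega^p_X \to \Omega^p_{V(x_j)}\bigr),
\]
which shows that $\Omega^p_X\llogm{D}$ is precisely the kernel of the restriction $\Omega^p_X \to p_{1*}\Omega^p_{X_1}$. I will verify both inclusions directly using the explicit local generators. The forward inclusion follows because each generator $\omega_{I,\mathbf{k}}\wedge dx_J$ of $\Omega^p_X\llogm{D}$ either has $x_j$ as a coefficient factor (when $j\in\mathbf{k}\setminus I$) or $dx_j$ in its wedge part (when $j\in I$), so it restricts to zero on $V(x_j)$. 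For the reverse inclusion, expand a local section $\eta = \sum_K f_K\,dx_K$; imposing $\eta|_{V(x_j)}=0$ for every $j\in\mathbf{m}\setminus K$ forces $\prod_{j\in\mathbf{m}\setminus K}x_j \mid f_K$, which exhibits $\eta$ as a section of $F\cdot \Omega^p_X\llog{D} = \Omega^p_X\llogm{D}$. Identifying $\Omega^\bullet_D$ canonically with the image of $\Omega^\bullet_X$ inside $p_{1*}\Omega^\bullet_{X_1}$, the exact sequence in (ii) then follows.

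For (i), by (ii) it suffices to prove exactness of the augmented complex
\[
0 \to \Omega^p_D \to p_{1*}\Omega^p_{X_1} \stackrel{\rho_1}{\to} p_{2*}\Omega^p_{X_2} \stackrel{\rho_2}{\to} \cdots
\]
of sheaves on $D$ in each degree $p$. I will do this stalkwise. At a point where $D$ has multiplicity $\ell$, formal or analytic sections of $\Omega^p_X$ decompose into monomial pieces $\mathbf{x}^{\mathbf{n}}\,dx_K$ indexed by their ``total support'' $S := \supp(\mathbf{n}) \cup K$. Both the restriction maps to the branches $X_I$ and the simplicial differentials $\rho_k$ respect this decomposition, since $\mathbf{x}^{\mathbf{n}}\,dx_K$ restricts nontrivially to $X_I$ precisely when $I \cap S = \emptyset$. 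Setting $T := \mathbf{m}\setminus S$, the $(\mathbf{n},K)$-piece of the augmented complex becomes, up to a one-dimensional coefficient factor, the augmented reduced chain complex of the simplex on the vertex set $T$. This is acyclic when $T\neq\emptyset$ by contractibility of the simplex; when $T = \emptyset$ (i.e.\ $\mathbf{m}\subset S$) the criterion from (ii) places $\mathbf{x}^{\mathbf{n}}\,dx_K$ inside $\Omega^p_X\llogm{D}$, so both the $\Omega^p_D$-piece and all \v{C}ech pieces vanish trivially. Summing over $(\mathbf{n},K)$ yields the desired exactness.

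The step I expect to require the most care is verifying that the monomial decomposition is genuinely compatible with the simplicial boundary $\rho_k$ on each $p_{k*}\Omega^\bullet_{X_k}$---in particular that the alternating-restriction signs line up with the standard simplicial boundary on $\Delta^{|T|-1}$, so that each fixed-$S$ piece really becomes the chain complex of an abstract simplex. Once this bookkeeping is in place, the remaining ingredients---the explicit local generation of $\Omega^p_X\llogm{D}$ and contractibility of a simplex---are routine.
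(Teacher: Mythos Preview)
Your argument for (ii) is essentially the paper's: both compute the kernel of $\Omega^\bullet_X \to p_{1*}\Omega^\bullet_{X_1}$ locally in adapted coordinates and identify it with $\Omega^\bullet_X\llogm{D}$, then identify $\Omega^\bullet_D$ with the image.

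For (i), however, you take a genuinely different route. The paper constructs an explicit contracting homotopy $h_k\colon p_{k*}\Omega^\bullet_{X_k}\to p_{k-1*}\Omega^\bullet_{X_{k-1}}$, setting $(h_k(\omega))_J := \sum_{i\notin J}\sgn(J|i)\,\omega_{J\cup i}$; for this to make sense it must first extend each component $\omega_I$ from $X_I$ back to the branches $X_i$ containing it, which it does via a chosen compatible family of deformation retractions $X_i\to X_I$. Your monomial decomposition bypasses this extension step entirely: a monomial $\mathbf{x}^{\mathbf{n}}dx_K$ is already defined on all of $X$, so its images on the various strata $X_I$ with $I\subset T$ are canonically identified, and the $(\mathbf{n},K)$-piece of the augmented simplicial complex is literally the reduced cochain complex of the simplex on the vertex set $T=\mathbf{m}\setminus S$. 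Your route is more elementary---no retractions to choose, no compatibility conditions to verify, and convergence of the lifted series is automatic since the simplex homotopy has operator norm bounded uniformly in $|T|\le m$---while the paper's route produces a single geometric homotopy operator on the whole complex at once. Both ultimately rest on the contractibility of a simplex.
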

\begin{proof}(i) 
To begin with, there is clearly a map
 $\Omega^\bullet_D\to\Omega^\bullet_{X_\bullet}$ and it is easy
 to check locally that this map induces an isomorphism 
$\Omega^\bullet_D\to\ker(\rho_0)$. It remains to prove that
$\ker(\rho_{k+1})=\im(\rho_k)$ which can also be done locally,
so we can choose a local basis. We may
assume each constituent $\omega_I$ is extended over $D_i$
for all $i\in I$ via some compatible collection of deformation retractions
$X_i\to X_I$. Then the required exactness follows by using the
following homotopy operator
\[h_k:p_{k*}\Omega^\bullet_{X_k}\to 
p_{k-1*}\Omega^\bullet_{X_{k-1}}\]
\[(h_k(\omega))_J:=\sum\limits_{i\not\in J}\sgn(J|i)\omega_{J\cup i},\]
where
\[\sgn(J|i)=(-1)^{|\{r:j_r>i\}|}.\]
(ii) It is easy to check locally that the image of the pullback map
\[\Omega^i_X\to p_{1*}\Omega^{i}_{X_1}\]
coincides with the kernel of $\rho_1$, and then that the kernel of the same
pullback map coincides with $\Omega^i_X\llogm{D}$.
\end{proof}
\section{A double helix}
In what follows we will fix a manifold $X$ of even dimension $2n$ endowed with a Poisson
structure $\Pi$. Our main interest is in the case where $(X, \Pi)$ is holomorphic,
i.e. $X$ is a complex manifold (of complex dimension $2n$), and $\Pi$ is holomorphic,
and especially where  $\Pi$ is pseudo-symplectic in the sense 
that on some dense open subset of $X$, $\Pi$ is nondegenerate,
i.e. dual to a symplectic structure.
However, some of the basic constructions apply without the pseudo- symplectic condition 
and in the real $C^\infty$ case as well.
\par
Brylinski \cite{brylinski} constructed on the sheaves of differential forms on $X$
the structure of a 'reverse de Rham' complex
\[ ...\Omega^i_X\stackrel{\del}{\to}\Omega^{i-1}_X\to..., \ \ \del=d\iota_\Pi-\iota_\Pi d.\]
where $\iota_\Pi$ denotes interior multiplication by $\Pi$ (which lowers degree by 2)
and $d$ is the usual exterior derivative. In fact, his construction 
is quite formal and is valid 
generally in the context of a Poisson structure on a ringed space $X/B$ where
$B$ is a ringed space over $\Q$,
interpreted as a linear map $\Omega^2_{X/B}\to\O_X$, and where $\iota_\Pi$
is the natural extension of the latter to a degree- (-2) map on $\Omega^\bullet_X$.\par
Our observation here is first that there exists a \emph{different} reverse de Rham complex,
which we call the 'Mahr de Poisson' (MdP) complex $\Theta^\bullet_X$
with differentials not proportional to Brylinski's, valid in similar generality.
An essential feature of the MdP complex not shared by
the Brylinski complex  is the existence of a 'bonding map'
\[\pi:\Theta^{n]}_X\to\Omega^{n]}_X\]
as well as a dual bonding map
\[\pi':\Omega^{[n}_X\to\Theta^{[n}_X.\]
Based in this, we will
define a pair of hybrid complexes
$\ed^\bullet, \de^\bullet$, each of type 'half de Rham, half twisted reverse de Rham'. 
We will then construct a map between
these complexes and study its mapping cone, identifying it with an
analogous complex built on the \emph{foliated} de Rham/ twisted reverse de Rham
complex associated to a 'degeneracy foliation' defined on the degeneracy 
or Pfaffian divisor of $\Pi$. 'Morally speaking', it is the existence of this foliation and its
associated foliated de Rham complex, which is a quotient of the de Rham complex
of $X$, that force our twisted reverse de Rham complex to exist, essentially
as the kernel of the quotient map. See \cite{ciccoli} or \cite{dufour-zung} for basic
information on Poisson structures.
\par
To begin with, define an operator
\eqspl{}{
	\delta:\Omega^{n+i}_X\to\Omega_X^{n+i-1}, i\in [-n,n],\\
	\delta=id\iota_\Pi-(i-1)\iota_\Pi d.
} 
[To simplify the notation we will sometimes suppress the
interior multiplication symbol and simply write this operator
as $id\Pi-(i-1)\Pi d$. We will also denote $\iota_{\Pi}(\omega)$
by $\carets{\Pi, \omega}$].
Here $n$ is of course half the dimension of $X$ if $X$ is a
Poisson manifold of dimension $2n$, 
or just an arbitrary natural number if $X/B$ is an
arbitrary Poisson ringed space (in which case the construction 
will of course depend on $n$).
Note that this differential  is not proportional to Brylinski's.
Then define sheaves $\ed^i, \de^i, \Theta^i$ by
\eqspl{}{
	\Theta^i&=\Omega_X^{2n-i}, i\in[0, 2n],\\
	\ed^i&=\begin{cases} \Omega^{n-i}_X, i\in[-n, 0]\\
		\Omega^{n+i}_X, i\in [1,n],
	\end{cases}\\
	\de^i&=\begin{cases}\Omega^{n+i}_X, i\in [-n,0]\\
		\Omega^{n-i}, i\in [1,n].
	\end{cases}
}
Note that the maps  defined by interior multiplication
\eqspl{}{
	\iota_{\Pi^k}:\Omega^{n+k}\to \Omega^{n-k}
}
yield for each
$i\in [0,n]$ a map
\eqspl{}{
	\pi:\Theta^i\to \Omega^i
}
and for each $i\in [n,2n]$ a map
\eqspl{}{
	\pi':\Omega^i\to\Theta^i
} and also for each
$i\in[-n,n]$ a map
\eqspl{pi-on-dihelical}{
	\pi=\iota_{\Pi^{|i|}}:\ed^i\to\de^i.
}
\begin{thm}\label{dihelical}
	Let $\Pi$ be a Poisson structure on a ringed space $X/B$.\par
	(i) Endowed with differential $\delta$,
	$\Theta^\bullet$ is a complex.\par
	(ii) Endowed with differential $\delta$ in negative degrees and $d$ in nonnegative degrees,
	$\de^\bullet$ is a complex.\par
	(iii) Endowed with differential $d$ in negative degrees and $\delta$ in nonnegative degrees,
	$\ed^\bullet$ is a complex.\par
	(iv) The map $\pi$ defined above yields morphisms of complexes
	\eqspl{}{
		\pi:\Theta^\bullet_{n]}\to\Omega^\bullet_{n]},\\
		\pi':\Omega^\bullet_{[n}\to\Theta^\bullet_{[n},\\
		\pi:\ed^\bullet\to\de^\bullet
	}where $n], [n$ denote truncation above (resp. below) degree $n$,
	which are isomorphisms locally wherever $\Pi$ is nondegenerate.
\end{thm}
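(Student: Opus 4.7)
The plan is to reduce all four parts of Theorem~\ref{dihelical} to algebraic identities in the composite operators $A := d\iota_\Pi$ and $B := \iota_\Pi d$. Three fundamental relations do the work: (a) $BA = \iota_\Pi d^2 \iota_\Pi = 0$, which is tautological from $d^2 = 0$; (b) Brylinski's identity $\del^2 = 0$ for $\del = A - B$, which, combined with (a), expands to $A^2 - AB + B^2 = 0$ (equivalently $AB = A^2 + B^2$); and (c) Koszul's supercommutator identity $\iota_\Pi\del = \del\iota_\Pi$. Both (b) and (c) follow from the Poisson condition (Jacobi) via the classical Schouten-Nijenhuis/Koszul calculus on polyvector fields. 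Iterated use of (c) with $\iota_{\Pi^k} = \iota_\Pi\iota_{\Pi^{k-1}}$ yields $\iota_{\Pi^k}\del = \del\iota_{\Pi^k}$ for all $k$.

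For part (i), direct expansion of $\delta_{j-1}\circ\delta_j = [(j-1)A - (j-2)B][jA - (j-1)B]$, followed by the substitutions $BA = 0$ and $AB = A^2 + B^2$, collapses to $(j-1)(A^2 - B^2)$. The remaining fact $A^2 = B^2$ I would derive via the short chain
\[
A^2 - B^2 \;=\; A\del + \del A \;=\; d\iota_\Pi\,\del \,-\, d\del\,\iota_\Pi \;=\; d\,(\iota_\Pi\del - \del\iota_\Pi) \;=\; 0,
\]
in which the first equality follows from $BA=0$ together with $AB = A^2 + B^2$, the middle step uses the identity $d\del + \del d = 0$ (itself immediate from $d^2=0$), and the last step uses (c). This is the chief computational step.

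Parts (ii) and (iii) are essentially free once (i) is established: within each pure half the claim is either trivial ($d^2 = 0$) or already handled by the computation of (i); and at the single gluing index one has $\delta_0 \circ d = \iota_\Pi d^2 = 0$ for $\de^\bullet$ and $d \circ \delta_1 = d^2\iota_\Pi = 0$ for $\ed^\bullet$, both vanishing by $d^2 = 0$.

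For part (iv), the commutativity of $\pi$, $\pi'$, and the dihelical $\pi:\ed^\bullet\to\de^\bullet$ with the respective differentials reduces, square by square, to one of the two identities
\[
\iota_{\Pi^{k-1}}\,\delta_k \;=\; d\,\iota_{\Pi^k}, \qquad \delta_{-k}\,\iota_{\Pi^k} \;=\; \iota_{\Pi^{k+1}}\,d.
\]
These in turn follow from the higher-order Cartan formula
\[
d\iota_{\Pi^k} - \iota_{\Pi^k}\,d \;=\; k\,\del\,\iota_{\Pi^{k-1}},
\]
which I would prove by induction on $k$ via graded Leibniz for the supercommutator $[d,\cdot]$ and the Koszul relation $\iota_\Pi\del = \del\iota_\Pi$ to collapse the alternating sum. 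Substituting $\delta_k = kA - (k-1)B$, rewriting $A = \del + B$, and using $\iota_{\Pi^{k-1}}\iota_\Pi = \iota_{\Pi^k}$ then yields the two displayed identities in a few lines. Finally, wherever $\Pi$ is nondegenerate each $\iota_{\Pi^k}$ is an isomorphism $\Omega^{n+k} \stackrel{\sim}{\to} \Omega^{n-k}$ (standard Lefschetz-type duality induced by the symplectic form), so $\pi$, $\pi'$, and the dihelical $\pi$ are isomorphisms locally there.
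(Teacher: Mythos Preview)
Your proof is correct and follows essentially the same route as the paper: both reduce (i) to the identity $d\iota_\Pi d\iota_\Pi=\iota_\Pi d\iota_\Pi d$ via Brylinski's $\del^2=0$ together with a commutator relation (the paper's Lemma giving \eqref{commutation1}--\eqref{commutation2}, whose $m=2$ case is equivalent to your Koszul identity $[\iota_\Pi,\del]=0$), and both reduce (iv) to exactly those commutation relations, which are your higher-order Cartan formula. Your expansion $\delta_{j-1}\delta_j=(j-1)(A^2-B^2)$ is in fact tidier than the coefficient the paper records.
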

The top and bottom squares of $\pi$ are, respectively:
\eqspl{}{
	\begin{matrix}
		&\Omega^{2n}_X&\stackrel{\iota_{\Pi^n}}{\to}&\O_X&\\
		\delta_n=nd\iota_\Pi-(n-1)\iota_\Pi d=nd\iota_\Pi&\downarrow&&\downarrow d&\\
		&\Omega^{2n-1}_X&\stackrel{\iota_{\Pi^{n-1}}}{\to}&\Omega_X^1&\\ \\ \\
		%\end{matrix}\\
		%\begin{matrix}
		&\Omega^{2n-1}_X&\stackrel{\iota_{\Pi^{n-1}}}{\to}&\Omega^1_X&\\
		&d\downarrow&&\downarrow&\delta_{-n+1}=-(n-1)d\iota_\Pi+n\iota_\Pi d=n\iota_\Pi d\\
		&\Omega^{2n}_X&\stackrel{\iota_{\Pi^{n}}}{\to}&\O_X&
	\end{matrix}
} 
where $\delta_n=nd\iota_{\Pi}$ on $\Omega^{2n}_X$ and 
$\delta_{-n+1}=n\iota_\Pi d$ on $\Omega^1_X$.
The middle two squares are:
\eqspl{}{
	\begin{matrix}
		&\Omega^{n+1}_X&\stackrel{\iota_\Pi}{\to}&\Omega_X^{n-1}&\\
		d\iota_\Pi&\downarrow&&\downarrow d&&\\
		&\Omega^n_X&=&\Omega^n_X&&\\
		&\downarrow d&&\downarrow&-\iota_\Pi d\\
		&\Omega_X^{n+1}&\stackrel{\iota_\Pi}{\to}&\Omega_X^{n-1}.&
	\end{matrix}
} We will call the mapping cone of $\pi$ the \emph{dihelical} (double helix) complex
associated to $\Pi$.
\begin{cor}
	Locally where $\Pi$ is nondegenerate, $\ed^\bullet$ and $\de^\bullet$ are exact in degrees $\neq -n$.
\end{cor}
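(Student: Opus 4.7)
The strategy is to show that, locally wherever $\Pi$ is nondegenerate, the complex $\ed^\bullet$ is isomorphic as a complex to the whole de Rham complex $\Omega_X^\bullet$ reindexed so that $\Omega_X^j$ sits in cohomological degree $j-n$. The holomorphic (or smooth) Poincar\'e lemma will then immediately yield local exactness of $\ed^\bullet$ outside degree $-n$, and the statement for $\de^\bullet$ will follow from the isomorphism $\pi : \ed^\bullet \to \de^\bullet$ of Theorem~\ref{dihelical}(iv).

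I view $\ed^\bullet$ as the concatenation at the middle term $\ed^0 = \Omega_X^n$ of two halves: the left half in cohomological degrees $i \in [-n,0]$, whose terms are $\Omega_X^{n-i}$ with $\delta$-differentials (and which coincides, up to reindexing, with $\Theta^{n]}$), and the right half in cohomological degrees $i \in [0,n]$, whose terms are $\Omega_X^{n+i}$ with $d$-differentials. I then define a graded morphism $\phi : \ed^\bullet \to \Omega_X^\bullet$ (reindexed as above) by
\[
\phi^i = \iota_{\Pi^{|i|}} : \Omega_X^{n-i} \to \Omega_X^{n+i} \text{ for } i \in [-n,0], \qquad \phi^i = \mathrm{id}_{\Omega_X^{n+i}} \text{ for } i \in [0,n].
\]
The two prescriptions agree at $i=0$ (where $\iota_{\Pi^0} = \mathrm{id}$), so $\phi$ is well defined.

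On the left half, $\phi$ is, up to the reindexing, the $\pi$-morphism $\Theta^{n]} \to \Omega^{n]}$, which by Theorem~\ref{dihelical}(iv) is a morphism of complexes and an isomorphism wherever $\Pi$ is nondegenerate. On the right half both differentials are $d$ and $\phi$ is the identity, so compatibility is automatic. Across the glue point, the compatibility $\phi^0 \circ \delta_1 = d \circ \phi^{-1}$ amounts to $\delta_1 = d\iota_\Pi$, which holds by the very definition of $\delta_1$. Therefore $\phi$ is a morphism of complexes, and is a local isomorphism wherever $\Pi$ is nondegenerate, since there each $\iota_{\Pi^{|i|}}$ is an isomorphism by Theorem~\ref{dihelical}(iv).

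The Poincar\'e lemma says that $\Omega_X^\bullet$ is locally a resolution of $\underline{\C}_X$; under the reindexing this corresponds to concentration in degree $-n$. Pulling back via the local isomorphism $\phi$, the complex $\ed^\bullet$ is locally exact outside degree $-n$, and applying the quasi-isomorphism $\pi : \ed^\bullet \to \de^\bullet$ transfers the same conclusion to $\de^\bullet$. The only step that requires any checking is the compatibility of $\phi$ with differentials at the glue point $i=0$, but this reduces to the tautological identity $\delta_1 = d\iota_\Pi$, so no real obstacle arises.
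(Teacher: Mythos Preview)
Your argument is correct and is essentially a fleshed-out version of the paper's one-line proof (``Follows from exactness in positive degrees of the de Rham complex''). The paper leaves implicit exactly what you make explicit: that gluing the isomorphism $\pi:\Theta^{n]}\to\Omega^{n]}$ from Theorem~\ref{dihelical}(iv) on the left half with the identity on the right half yields a chain isomorphism $\ed^\bullet\simeq\Omega_X^\bullet$ (reindexed), after which the Poincar\'e lemma and the isomorphism $\pi:\ed^\bullet\to\de^\bullet$ finish the job. Your check at the glue point $i=-1$ is in fact already subsumed in the statement that $\pi:\Theta^{n]}\to\Omega^{n]}$ is a chain map (since that map includes the square at $\Theta^{n-1}\to\Theta^n$), so no extra verification was strictly needed there, but it does no harm.
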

\begin{proof}
	Follows from exactness in positive degrees of the de Rham complex.
\end{proof}
\begin{cor}\label{T-D} Assume $(X, \Pi)$ is holomorphic and $\Pi$ is generically
	nondegenerate with Pfaffian divisor $D=[\Pi^n]$. Then
	$\Theta^\bullet_{n]}$ is isomorphic to the 'augmented twisted
	truncated Poisson complex'
	\[\O(-D)\to T_X(-D)\to T^2_X(-D)...\]
	where the first map sends a function $f$ to the corresponding
	Hamiltonian vector field $\carets{df, \Pi}$ and other maps are
	$[., \Pi]$.\par
	%(ii) $\Theta_{[n}$ is isomorphic to the truncated twisted Poisson
	%complex $T^\bullet(+D)_{[n}$. 
\end{cor}
\begin{proof}
	There is a map
	\[\carets{., \Pi^{n-i}}:\Omega^{n-i}\to T^{n-i}\]
	coming from a morphism of complexes, 
	such that the composition $\Omega^{n+i}\to T^{n-i}$ is interior
	multiplication by $\Pi^n$, which is $F$ times a volume form where
	$F$ is an equation of $D$. This composite yields the 
	desired isomorphism
	of $\Theta_{n]}$ with the augmented twisted truncated Poisson 
	complex (we use 'augmented' because the usual Poisson complex start with
	$T_X$ and 'twisted' because the terms are (even though the differential is not).\end{proof}
{\textit{Remark.}} The referee points out that the contraction map above is well know and appears,
e.g. in  “Twisted Poincar\'e duality
between Poisson homology and cohomology”, Luo-Wang-Wu, J.
Algebra 442 (2015), 484-505.\par

%The proof of (ii) is dual to the above. That is, we consider the composite
%\[T^{n+i}\to\Omega^{n+i}\to \Omega^{n-i}
%\]  with the first map being $\carets{., \Pi^{n+i}}$ and the composite
%being essentially multiplication by $F$.

Thus, the complex $\Theta^\bullet$ is not really 'new' but its realization
in terms of differential forms makes possible a useful connection
with Hodge theory (see \S\ref{hodge} below).\par
The proof of the Theorem uses the following Calculus lemma (for which which
the integrability condition $[\Pi,\Pi]=0$ is essential):
\begin{lem}
	We have
	\eqspl{commutation1}
	{d\iota_{\Pi^m}=\iota_{\Pi^{m-1}}(md\iota_\Pi-(m-1)\iota_\Pi d)}
	\eqspl{commutation2}{
		\iota_{\Pi^m}d=(m\iota_\Pi-(m-1)d\iota_\Pi)\iota_{\Pi^{m-1}}.
	}
\end{lem}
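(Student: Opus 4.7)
The approach is induction on $m$. The case $m=1$ of (1) reads $d\iota_\Pi = d\iota_\Pi$, which is tautological, so the entire content resides in the inductive step; the engine driving it is the classical Brylinski reformulation of the Jacobi identity $[\Pi,\Pi]_{\mathrm{Schouten}}=0$, namely $[\partial,\iota_\Pi]=0$, where $\partial:=d\iota_\Pi-\iota_\Pi d$ is the Koszul--Brylinski operator.

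The first step is to invoke the identification $\iota_{\Pi^m}=\iota_\Pi^m$, valid for a bivector $\Pi$ under the standard sign convention $\iota_{U\wedge V}=\iota_V\iota_U$ on even-degree multivectors, and to recast identity (1) as the commutator formula
\begin{equation*}
[d,\iota_\Pi^m]=m\,\iota_\Pi^{m-1}\,\partial,
\end{equation*}
obtained by subtracting $\iota_\Pi^m d$ from both sides of (1). Identity (2) likewise becomes $[d,\iota_\Pi^m]=m\,\partial\,\iota_\Pi^{m-1}$, and the consistency of the two presentations is exactly $[\partial,\iota_\Pi^{m-1}]=0$, which follows from $[\partial,\iota_\Pi]=0$ by the graded Leibniz rule for commutators. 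So it suffices to prove either form of the commutator identity.

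I would do so by induction. The base case $m=1$ is the definition of $\partial$. For the step, since $[d,-]$ is a graded derivation of composition (with $d$ odd and $\iota_\Pi$ even),
\begin{equation*}
[d,\iota_\Pi^m]=[d,\iota_\Pi]\,\iota_\Pi^{m-1}+\iota_\Pi\,[d,\iota_\Pi^{m-1}]=\partial\,\iota_\Pi^{m-1}+(m-1)\,\iota_\Pi\,\iota_\Pi^{m-2}\,\partial,
\end{equation*}
the second equality using the inductive hypothesis. Commuting $\partial$ across $\iota_\Pi^{m-1}$ in the first summand via $[\partial,\iota_\Pi]=0$ produces $\iota_\Pi^{m-1}\,\partial$, so the total equals $m\,\iota_\Pi^{m-1}\,\partial$, closing the induction.

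The only nontrivial ingredient is the Brylinski relation $[\partial,\iota_\Pi]=0$, which encodes the integrability of $\Pi$; this is the main (and really the only) obstacle, and it is classical, provable either by expressing the commutator in terms of the Schouten bracket $[\Pi,\Pi]$ or by direct local computation in Darboux-adapted coordinates. Everything else is formal Cartan calculus.
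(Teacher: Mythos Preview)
Your argument is correct and follows essentially the same inductive route as the paper. The only difference is in packaging the base case: the paper proves the $m=2$ identity directly via a Lie derivative computation (using $L_\Pi\Pi=[\Pi,\Pi]=0$), which is exactly equivalent to the Brylinski relation $[\partial,\iota_\Pi]=0$ that you cite; from there both arguments induct. Your commutator reformulation $[d,\iota_\Pi^m]=m\,\iota_\Pi^{m-1}\partial=m\,\partial\,\iota_\Pi^{m-1}$ is a clean way to organize the induction and makes the equivalence of \eqref{commutation1} and \eqref{commutation2} transparent.
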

\begin{proof}[Proof of Lemma]
	To prove \eqref{commutation1} for $m=2$ we can use a direct local computation.
	In a slightly more canonical vein, we may compute, for any
	differential form $\phi$, by definition of Lie derivative
	\[L_\Pi\carets{\Pi,\phi}=d\carets{\Pi^2,\phi}-\carets{\Pi, d\carets{\Pi, \phi}}.\]
	On the other hand by the derivation property of Lie derivative
	and the fact that $L_\Pi\Pi=[\Pi,\Pi]=0$, we have
	\[L_\Pi\carets{\Pi,\phi}=\carets{\Pi,L_\Pi\phi}=
	\carets{\Pi, \carets{ d\carets {\Pi, \phi}}}-\carets{\Pi^2, d\phi}.\]
	Comparing the last two displayed equations yields \eqref{commutation1} for $m=2$,
	and the general case follows inductively. \eqref{commutation2} is proved similarly.
\end{proof}
\begin{rem*}
	Alternatively in the pseudo-symplectic case, the only case we need here, it suffices to
	prove the identities \eqref{commutation1}, \eqref{commutation2} on the dense
	open set where $\Pi$ is symplectic, which can be done by a simple local calculation.
\end{rem*}
%\begin{example}
%As a small check on the computation, take $m=2, \Pi=\del_{x_1}\del_{y_1}+\del_{x_2}\del_{y_2}$
%and evaluate both sides of \eqref{commutation1} on $dx_1dy_1dx_2dy_2\psi$ (all products are wedge products).
%The LHS yields $2d\psi$; the two terms on the RHS yield respectively $4d\psi$ and
%$2d\psi$ so the equality checks.
%\end{example}
\begin{proof}[Proof of Theorem] To prove that $\ed^\bullet$ and $\de^\bullet$
	are complexes, we start with the well-known relation (equivalent
	to vanishing of the square of the differential in Brylinski's complex):
	\[(d\iota_\Pi-\iota_\Pi d)^2=0.\]
	%(A quick proof: 
	%\[\del^2=(L_\Pi)^2=\frac{1}{2}[L_\Pi, L_\Pi]=\frac{1}{2}L_{[\Pi, \Pi]}=0 )\]
	Expanding, and using $d^2=0$ and \eqref{commutation1} for $m=2$ yields
	\[d\iota_\Pi d\iota_\Pi=\iota_\Pi d\iota_\Pi d.\]
	Then a direct computation yields
	\eqsp{
		((i-1)d\iota_\Pi-(i-2)\iota_\Pi d)(id\iota_\Pi-(i-1)\iota_\Pi d)=\\
		i(i-1)(d\iota_\Pi d\iota_\Pi-\iota_\Pi d\iota_\Pi d)=0
	} Thus, $\delta^2=0$. Together with $d^2=0$ and some trivial
	verifications around the midpoint $i=0$, 
	this suffices to show that $\Theta^\bullet, \ed^\bullet$
	and $\de^\bullet$ are complexes.\par
	Finally, the proof the $\pi$ is a morphism of complexes
	amounts to commutativity of suitable squares and
	translates exactly to \eqref{commutation1} and \eqref{commutation2}.
\end{proof}

\section{Log-symplectic manifolds}
\subsection{log-symplectic form}\label{log-symplectic-form-sec}
A Poisson manifold $(X, \Pi)$ of even dimension $2n$ such that
the degeneracy divisor $D=D(\Pi)=[\Pi^n]$ has local normal crossings
is said to be \emph{log-symplectic}.
The Poisson structure $\Pi$ can equivalently be described via
a 'log-symplectic form' $\Phi$.
 This is the meromorphic (in fact, logarithmic)  form defined by
\[\carets{\Pi^n,\Phi}=\Pi^{n-1}.\] 
Note that 
\[\carets{\Pi^n, \Phi^i}=\Pi^{n-i}.\]
Also, the maps on meromorphic forms
\[\Omega^{2n-i}_{X, mero}\stackrel{\carets{.,\Pi^{n-i}}}{\to}
\Omega_{X, mero}^i,
\Omega_{X, mero}^i\stackrel{.\wedge\Phi^{n-i}}{\to}
\Omega^{2n-i}_{X, mero}
\] are inverse to each other. We can write
\[\Pi^n=FV, \Phi^n=F\inv V^*\]
where $V, V^*$ are dual generators of $T^{2n}_X, \Omega^{2n}_X$
and $F$ is an equation for $D$.
Thus $\carets{V^*, \Pi^{n-1}}=F\Phi$ and for any $v\in \wedge^iT_X$
we have
\eqspl{pi-phi}{\carets{\carets{V^*, v}, \Pi^{n-i}}=\carets{\carets{V^*, \Pi^{n-i}}, v}
=F\carets{\Phi^i,v},}
thus the two maps
\eqspl{}{
\carets{.,\Pi^{n-i}}:\Omega^{2n-i}_X\to \Omega^i_X, \ \ 
F\carets{.,\Phi^i}:\wedge^iT_X\to\Omega^i_X
} are essentially the same under  the exterior duality identification
\[\Omega^{2n-i}_X(D)=\Omega_X^{2n-i}\otimes\wedge^{2n}T_X\simeq \wedge^iT_X\]
 and in particular they
have the same image.

%Therefore,
%\eqspl{c1}{C^1=\Omega^1_X/F\carets{\Phi, T_X}}
%and more generally
%\eqspl{c-sub-i}{
%C^i=\Omega^i_X/F\carets{\Phi^i, T^i_X}.
%}Note that it follows from \eqref{c1} that $C^1$ is a quotient
%of $\Omega^1_P$, which is the quotient of $\Omega^1_X$ by the
%submodule generated by $F$ and $dF$.
%Also, note that $\carets{\Phi^i, T^i_X}$ is generated via
%wedge products by $\carets{\Phi, T_X}$
%(of degree 1) and $\Phi$ itself (of degree $2$),
%because for all $i\geq 2$, $\carets{\Phi^i, T_X^i}$
%is generated by $\Phi$ and $\carets{\Phi, T_X}$. 
%\par *********************denominators ?\par

\subsection{Log duality}
%Here we will consider the relation of a log-symplectic Poisson structure
%$\Pi$ to log vector fields and log differentials for its degeneracy divisor
%$D$.
%Note that the complexes $\Theta^\bullet$,
% $\edb$ and $\deb$ admit log versions, with constituents
%$\Omega_X^i\llog{D}$. The morphisms $\pi$ extends to  morphisms
%\eqspl{}{
%\pi\llog{D}:\edb\llog{D}\to \deb\llog{D},\\
%\pi\llog{D}:\Theta^\bullet_{n]}\llog{D}\to\Omega^\bullet_{n]}\llog{D}.
%} Because $\pi$ degenerates along $P$, the following result is plausible:
When  $\Pi$ is log symplectic with degeneracy divisor $D$, we have a 
'log-duality' map
\[\carets{\Pi, .}:\Omega^1_X\llog{D}\to T_X\mlog{D}.\]
This map is easily seen to be an isomorphism, with inverse
$\carets{\Phi, .}$, $\Phi$ being the corresponding log-symplectic form
(compare the proof of Proposition \ref{log-duality} below).
Another useful map, also called log-duality, is defined as follows.

Consider the map
\[\pi=\carets{., \Pi^i}:\Omega^{n+i}_X\to\Omega^{n-i}_X.\]
This clearly extends to a map, called the \emph{log duality} map,
\[\pi\llog{D}=\carets{., \Pi^i}:\Omega^{n+i}_X\llog{D}\to\Omega^{n-i}_X\llog{D}.\]
The following result was known before, see e.g. \cite{logplus}.
\begin{prop}\label{log-duality}
If $\Pi$ is log-symplectic, then $\pi\llog{D}$ is an isomorphism.
Moreover, $\Pi$ defines a nondegenerate alternating form on
$\Omega^1_X\llog{D}$.
\end{prop}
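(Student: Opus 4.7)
The plan is to reduce both assertions to symplectic linear algebra on the free $\O_X$-module $T_X\mlog{D}$ and its dual $\Omega^1_X\llog{D}$. First I would show that the log form $\Phi$ gives $T_X\mlog{D}$ a genuine symplectic structure. Since $\Phi\in\Omega^2_X\llog{D}$ and $\Phi^n=F\inv V^*$ is a local generator of the invertible sheaf $\Omega^{2n}_X\llog{D}$, the form $\Phi$ has nowhere-vanishing top exterior power when viewed as an alternating pairing on the rank-$2n$ locally free sheaf $T_X\mlog{D}$. Consequently $\Phi$ induces an $\O_X$-linear isomorphism $\Phi^\flat:T_X\mlog{D}\stackrel{\sim}{\to}\Omega^1_X\llog{D}$, whose inverse corresponds to a bivector $\Pi'\in\wedge^2 T_X\mlog{D}$ inducing a nondegenerate alternating form on $\Omega^1_X\llog{D}$.

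Next I would identify $\Pi'$ with the given Poisson bivector $\Pi$. Both are sections of the coherent sheaf $\wedge^2 T_X$ (using $\wedge^2 T_X\mlog{D}\subset \wedge^2 T_X$), and they agree on the dense open set $X\setminus D$, where each is the symplectic bivector dual to $\Phi|_{X\setminus D}$ -- this is the very relation defining $\Phi$ from $\Pi$. Hence $\Pi=\Pi'\in\wedge^2 T_X\mlog{D}$, which establishes assertion (ii) and places $\Pi$ in the log tangent framework needed below.

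For assertion (i), the problem is now local and linear. I would choose a local symplectic basis of $T_X\mlog{D}$ for $\Phi$, say $\{e_j,f_j:1\leq j\leq n\}$ with $\Pi=\sum_j e_j\wedge f_j$ and $\Phi=\sum_j e_j^*\wedge f_j^*$. The map $\pi\llog{D}=\carets{\cdot,\Pi^i}:\Omega^{n+i}_X\llog{D}\to\Omega^{n-i}_X\llog{D}$ then becomes the classical contraction operator on exterior powers of a symplectic vector space. Its bijectivity is a standard fact -- an algebraic form of the Lefschetz isomorphism -- which may be checked directly in the basis or deduced from the $\mathfrak{sl}_2$-triple $\{e=\Phi\wedge\cdot,\ f=\carets{\cdot,\Pi},\ h\}$ acting on $\wedge^\bullet\Omega^1_X\llog{D}$: up to a nonzero combinatorial constant, $\carets{\cdot,\Pi^i}$ equals $f^i$, which exchanges the $\pm i$ weight spaces of $h$ isomorphically.

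The main technical point is the identification $\Pi\in\wedge^2 T_X\mlog{D}$; once that is in hand, both assertions are pure symplectic linear algebra and require no further analytic or geometric input. The argument gives, as a byproduct, an explicit inverse for $\pi\llog{D}$ up to a nonzero scalar, namely wedging with $\Phi^i$ followed by a suitable normalization.
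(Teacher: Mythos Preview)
Your argument is correct and takes a genuinely different route from the paper's. The paper argues by codimension: it checks that $\pi\llog{D}$ is an isomorphism off $D$ and at smooth points of $D$ (the latter via the local normal form), so it is a map of locally free sheaves of equal rank which is an isomorphism off a codimension-$2$ locus, hence an isomorphism everywhere; the nondegeneracy of $\Pi$ on $\Omega^1_X\llog{D}$ is handled similarly, noting that a Pfaffian degeneracy locus is a divisor or empty. You instead work directly from the fact (stated just before the Proposition) that $\Phi^n$ is a local generator of $\Omega^{2n}_X\llog{D}$, which makes $\Phi$ fiberwise symplectic on $T_X\mlog{D}$; identifying its inverse with $\Pi$ by density then reduces both assertions to the classical Lefschetz/$\mathfrak{sl}_2$ isomorphism on a symplectic vector space. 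Your approach is more self-contained and yields the explicit inverse $\Phi^i\wedge(\cdot)$ up to a scalar, while the paper's approach is shorter and avoids invoking the $\mathfrak{sl}_2$ machinery, at the cost of implicitly using the normal form at smooth points of $D$.
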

\begin{proof}
$\pi\llog{D}$ is clearly an isomorphism
locally off $D$ and
 at \emph{smooth} points of $D$. Thus, $\pi\llog{D}$ is
 a morphism of locally free sheaves
  of the same rank on $X$, which is an isomorphism
 off a codimension-2 subset, viz.the singular locus of $D$.
 Therefore $\pi\llog{D}$ is an isomorphism. The proof of the last
 assertion is similar, based on the fact that the (Pfaffian) degeneracy
 locus on $\Pi$ as alternating form on $\Omega^1_X\llog{D}$ is of pure
 codimension 1,  hence empty.
 \end{proof}
The remainder of this subsection extends duality to the case of 
the MdP complex $(\Theta^\bullet, \delta)$ defined in the last section
and uses the notations of that section.
 \begin{cor}\label{pipi-cor}
 Define a 'coduality' map of complexes of degree $2n$  as follows
 \eqspl{log-iso}{
 \pipi&:(\Theta\llog{D}^\bullet, \delta)\to (\Omega^\bullet\llog{P}, d),\\
 (\pipi)^i&=\iota_{\Pi^i}:\Omega^{n+i}\to\Omega^{n-i}, i=n,...,-n
 }
 where $\delta$ is the MdP differential and for $i<0$ , $\iota_{\Pi^i}$ means the inverse of the isomorphism
 $\iota_{\Pi^{|i|}}$, i.e. $\iota_{\Phi^{|i|}}$. Then $\pipi$ is an isomorphism of complexes.
 \end{cor}
 \begin{rem}
When $\Pi$ is $P$-normal,
the Proposition can also be proved by a straightforward local 
computation, using the normal form. Namely,
setting
\[d\log(x_I)=\bigwedge\limits_{i\in I}d\log(x_i), dy_J=\bigwedge\limits_{j\in J}dy_j,\]
and $(d\log(x_I)\wedge dy_J)\hat{}$ denoting the corresponding complementary multi-vectors,
we have
\[\iota_{\Pi^i}((d\log(x_I)\wedge dy_J)\hat{})=\pm dy_I\wedge d\log(x_J), |I|+|J|=i.\]
Thus, 
\[\iota_{\Pi^i}:\Omega_X^{2n-i}\llog{D}\to\Omega_X^i\llog{D}\]
sends a basis to a basis, hence is an isomorphism. \qed
\end{rem}

As a consequence, we can write down local generators for the cohomology
sheaves of $\Theta^\bullet\llog{D}$ for $\Pi$ P-normal
in terms of normal coordinates, cf. \eqref{normal-form}.
Fix a point $p$ of multiplicity $k$ on $P$ and a normal coordinate system $(x_i, y_i)$
so that precisely $x_1,...,x_k$ vanish at $p$. Set
\eqspl{}{
d_i=\dlog(x_i)dy_i, i=1,...,k, \\
d_i=dx_idy_i\  \mathrm{where}\  x_i\neq 0,\\
d_I=\bigwedge\limits_{i\in I}d_i, \ \ \dlog(x)_I=\bigwedge\limits_{i\in I} \dlog(x_i),
I\subset [1,k].
}
We recall (cf. e.g. \cite {deligne-hodge-ii} or \cite{grif-schmid}) that
the local cohomology of the usual log complex is well known
by Deligne, Griffiths and others. It is generated over $\C$  by the
 $\dlog(x)_I$ for various multi-indices $I\subset [1,k]$. 
 Note that the $x_i, i\in I$ are defining equations for a branch of 
$D^{|I|}$,  denoted $X_I$, and we have
\eqspl{log-local}{
\calh^i(\Omega_X\llog{D})=\begin{cases}
\nu_{i*} \C_{X_i}, i=0,...,n,\\
0, i>n.
\end{cases}
}
Applying the $\pipi$ isomorphism above, we conclude:
\begin{cor}\label{generators-cor}
Notations as above, $\Pi$ P-normal, we have
\eqspl{log-theta-local}{
\calh^{i}(\Theta^\bullet_X\llog{D})=\begin{cases}
\nu_{i*} \C_{X_i}, i=0,...,n,\\
0, i>n.
\end{cases}
}
If $\Pi$ is P-normal and $(x.)$ are normal coordinates,
then  the cohomology admits local generators of the form
\eqspl{}{
d_I\dlog(x)_J,\ \forall  I\coprod J=\{1,...,k\},
} 
where the latter generator is supported on the local
branch $X_J$
with equations $x_j, j\in J$.\par 
%/*******************

%In a normal coordinate system, locally at a point where 
%$x_1,...,x_j\neq 0$, $x_{j+1}=...=x_n=0$, $j\leq i$, 
%the cohomology sheaf $\calh^{-2n+i}(\Theta^\bullet\llog{P})$ is generated
%over $\C$ by
%\eqspl{}{&d_1...d_j\frac{d_{K}}{x_K}\dlog(x_{K^c})=\\
%&d_1...d_j\dlog(x_{j+1}...x_n)dy_{K}}
%where $K$ ranges over all index-set of cardinality $i-j$
%in $(j+1,...,n)$ and $K^c$ denotes complement in $(j+1,...,n)$.
\end{cor}
%Thus in general, $\calh^{-2n+i}(\Theta^\bullet\llog{P})$ is locally a direct
%sum of constant sheaves associated to branches of the $i$-fold locus
%of $P$ through the given point, i.e. 
%\[ \calh^{-2n+i}(\Theta^\bullet\llog{P})\simeq \nu_{i*}\C_{X_i},\]
%which of course follows from the isomorphism $\pipi$ above.

%*******************************************/
\par
\subsection{ Standard form}\label{standard-form-sec}
We return to the case $\Pi$ arbitrary log-symplectic.
The $\pi\llog{D}$ isomorphism is useful in yielding a standard
form for $\Pi$ and the corresponding log-symplectic form $\Phi$, as follows. Let $F=x_1...x_m$ be a local equation for $D$ where
$x_1,...,x_{2n}$ are local coordinates. Set
\[v_i=\begin{cases}x_i\del_{x_i}, 1\leq i\leq m;\\
\del_{x_i}, m+1\leq i\leq 2n.
\end{cases}.\]
These form a local basis for the sheaf of log vector fields $T_X\mlog{D}$.
Let $v_i^*$ be the dual basis for $\Omega^1_X\llog{D}$ ($=dx_i/x_i$
or $dx_i$). Then
\eqspl{pi-form}{
\Pi=\sum a_{ij}v_iv_j,
}
\eqspl{phi-form}{
\Phi=\sum b_{ij}v^*_iv^*_j
} where $A=(a_{ij}), B=(b_{ij})=A\inv$ are skew-symmetric
and holomorphic. 
In fact, $B=\frac{1}{F}\wedge^{n-1}A$.
These are the matrices of the isomorphism $\pi\llog{D}$ and its
inverse. \par
\subsection{log (co)normal bundle}\label{log-normal-sec}
Here $\Pi$ is arbitrary log-symplectic.
\subsubsection{First order}
Notations as above, the natural map induced by inclusion
\[\nu_1^*(T_X\mlog{D})\to \nu_1^*(T_X)\]
has an $\O_{X_1}$-invertible kernel, denoted $N_{\log{(D)}}$, called
the \emph{log normal bundle} associated to the normal-crossing divisor $D$.
$N_{\log{(D)}}$ is dual to the cokernel of the inclusion
\[\nu_1^*(\Omega^1_X)\to\nu_1^*(\Omega^1_X\llog{D}),\]
hence via residue $N_{\log{(D)}}$ is globally free with
local generator $x_1\del_{x_1}$ where $x_1$ is a branch equation for $D$.
We have exact sequences
\eqspl{log-normal-seq}{
\exseq{N_{\log(D)}}{\nu^*_1(T_X\mlog{D})}{&T_{X_1}\mlog{D_1}}, \\
\exseq{&T_{X_1}\mlog{D_1}}{\nu_1^*(T_X)}{N_{X_1/X}}.}
When $D$ is the polar divisor of a log-symplectic
form $\Phi$ we denote by $\check N_{\log(D)}$ the
image of $N_{\log(D)}$ by the log duality map $\pi\llog{D}$.
This is a priori a  a line subbundle of
$\nu_1^*(\Omega^1_X\llog(D)),$ 
but in the exact residue sequence
\[\exseq{\Omega^1_{X_1}\llog{D_1}}{\nu_1^*(\Omega^1_X\llog{D})}{\O_{X_1}},\]
clearly the residue map, which is given
by interior multiplication by $v_1=x_1\del_{x_1}$,
 is zero on $\check N_{\log{D}}$, so
it is actually a line subbundle of $\Omega^1_{X_1}\llog{D_1}$.
\par
% /***********
%which maps isomorphically to a line
%subbundle of $\Omega^1_{X_1}\llog{D_1}$. This is called the
%\emph{log conormal bundle} of $D$. We have exact sequences 
%\eqspl{log-conormal-seq}{
%\exseq{\check N_{\log(D)}}{\nu_1^*(\Omega^1_X\llog{D})}{&\Omega^1_{X_1}}\\
%\exseq{&\Omega^1_{X_1}}{\nu_1^*(\Omega^1_x\llog{D})}{\O_{X_1}}
%} with the latter map being residue.
%************* /
 \par

Note that unlike the usual conormal,
the log conormal is a \ul{sub}bundle of the log differentials on $X_1$,
and it is naturally isomorphic rather than dual to the log normal.
We get a canonical generator of $\check N_{\llog{D}}$, denoted $\psi_1$.
In terms of a standard form $\Phi=\sum b_{ij}\dlog(x_i)\dlog(x_j)$
as in \S\ref{standard-form-sec},
$\psi_1$  
has the form, locally on $X_1$ where $x_1$ is a branch equation
\eqspl{}{
\psi_1=\sum\limits_{i=2}^{2n}b_{1i}\dlog(x_i)
=\carets{\Phi, v_1}.
}

Note that $\psi_1$ is a \emph{closed} log form on $X_1$. It suffices to
check this at a general point of $X_1$,
where we may assume
(with a different coordinate system) that 
$\Phi=dx_1dx_2/x_1+\sum\limits_{i=2}^{2n} dx_{2i-1}dx_{2i}$,
$v_1=x_1\del_{x_1}$ so $\psi_1=dx_2$ is closed.
\subsubsection{Higher order}
Essentially the same construction applies to the higher-order loci $X_k$.
Thus, a point in $X_k$ comes equipped with $k$ transverse normal hyperplanes
corresponding to $k$ branches of $D$, which are well-defined up to order.
Hence the kernel of
\[\nu_k^*(T_X\mlog{D})\to\nu_k^*(T_X)\]
is a flat, integrable rank-$k$ bundle, 
denoted $N^k_{\log(D)}$, called the log normal
bundle of order $k$. It is locally generated by the log vector fields
$x_1\del_1,...,x_k\del_k$. Since these are canonical up to order, 
the log normal bundle becomes trivial after a suitable $S_k$-cover,
and is already trivial if $D$ has \emph{simple} normal crossings.  
We have exact sequences of locally free $\O_{X_k}$-
modules
\eqspl{log-normal-seq-order-k}{
\exseq{N^k_{\log(D)}}{\nu^*_k(T_X\mlog{D})}{&T_{X_k}\mlog{D_k}}, \\
\exseq{&T_{X_k}\mlog{D_k}}{\nu_k^*(T_X)}{N_{X_k/X}}.}

In the log-symplectic case,
$N^k_{\log(D)}$ is isomorphic via log duality
to a trivial
rank-$k$ subbundle of $\nu_k^*(\Omega^1_{X}\llog{D})$,
denoted $\check N^k_{\log(D)}$,
   with local generators
$\psi_i=\carets{\Phi, v_i}, i=1,...,k$.
Locally at a point, $X_k$ admits $k$ divisorial embeddings into
transverse branches of $X_{k-1}$, with associated log conormals
$\check N_i\subset\Omega^1_{X_k}\llog{D_k}$, respectively
generated by the $\psi_i$, 
and we have  $\check N^k_{\log(D)}=\bigoplus \check N_i$. 
As in the first-order case, we have $\check N_i\subset \Omega^1_{X_k}\llog{D_k}$,
 hence
$\check N^k_{\log(D)}\subset\Omega^1_{X_k}\llog{D_k}$.

%/**********************
%Then the product $\psi_{\ul k}$ contracts to zero with $v_i, i=1,...,k$
%hence belongs to the subbundle
%$\Omega^k_{X_k}\llog{D_k}\subset\nu_k^*(\Omega_X^k(\llog{D})$.
%We denote the free invertible sheaf generated by $\psi_{\ul k}$ by 
% $\check N^k_{\log(D)}$. Note that
% \[\check N^k_{\log(D)}=\carets{\Phi^k, \wedge^kN^k_{\log(D)}}.\]
%
% 
%and are
%individually well defined up to order. In particular the product
%$\psi_1...\psi_k$ is well defined up to sign.
%This fits in exact sequences
%\eqspl{log-conormal-seq}{
%\exseq{\check N^k_{\log(D)}}{\nu_k^*(\Omega^1_X\llog{D})}{&\Omega^1_{X_k}}\\
%\exseq{&\Omega^1_{X_k}}{\nu_k^*(\Omega^1_X\llog{D})}{k\O_{X_k}}
%}
%Unlike in the first-order case, the log conormal bundle need not map isomorphically
%to a (locally free and cofree) subbundle of $\Omega^1_{X_k}\llog{D_k}$- indeed 
%the image may well be zero. However, that will be the case when $\Phi$ 
%satisfies the Residual Generality condition described in the next section
%\S \ref{rg-sec}.
Because the $\psi_i$ are closed forms ,
$\check N^k_{\log(D)}$ is an integrable subbundle, i.e.
corresponds to a codimension-$k$ foliation. This foliation
is known as the kernel or symplectic foliation, due to
the following
\begin{lem}\label{conormal-kernel-lem}
Outside the divisor $D_k\subset X_k$, the conormal bundle 
$\check N^k_{\log(D)}$ coincides with the kernel
of the Poisson structure induced on $X_k$ by $\Pi$.
\end{lem}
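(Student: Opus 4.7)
The plan is to argue locally at a general point of $X_k\setminus D_k$, using the log duality isomorphism $\iota_\Pi:\Omega^1_X\llog{D}\xrightarrow{\sim}T_X\mlog{D}$ with inverse $\iota_\Phi$ from \S\ref{log-normal-sec}. Fix local coordinates $x_1,\ldots,x_{2n}$ at the point so that $D$ has equation $x_1\cdots x_k=0$, $X_k=\{x_1=\cdots=x_k=0\}$ locally, and no further branches of $D$ pass (so $D_k$ is empty nearby). Then $N^k_{\log(D)}$ is generated by $v_i=x_i\partial_{x_i}$, $i\le k$, and $\check N^k_{\log(D)}$ by the $\psi_i=\iota_{v_i}\Phi$.

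The key identity is
\[\iota_\Pi(\psi_i)=\iota_\Pi\iota_{v_i}\Phi=v_i,\]
a formal consequence of $\iota_\Pi$ and $\iota_\Phi$ being mutually inverse on log (co)tangent bundles. The critical observation is that, although $v_i$ is a nonzero generator of $N^k_{\log(D)}$ viewed as a \emph{log} vector field, as an ordinary vector field $v_i=x_i\partial_{x_i}$ vanishes identically on $X_k$ since $x_i|_{X_k}=0$. Hence the Hamiltonian vector field $\Pi^\sharp(\psi_i)$ restricts to $0$ in $T_X|_{X_k}$. A quick check from the standard form \eqref{pi-form} shows that each branch of $D$, and therefore $X_k$, is a Poisson subvariety of $X$, so the induced Poisson map factors as the composition $\Omega^1_{X_k}\hookrightarrow\Omega^1_X|_{X_k}\xrightarrow{\Pi^\sharp} T_X|_{X_k}\twoheadrightarrow T_{X_k}$, which annihilates $\psi_i|_{X_k}$. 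This yields the inclusion $\check N^k_{\log(D)}|_{X_k\setminus D_k}\subseteq\ker\Pi|_{X_k}^\sharp$.

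To upgrade this inclusion to equality I compare ranks using the log duality. Restriction of $\iota_\Pi$ to $\check N^k_{\log(D)}$ is the given isomorphism onto $N^k_{\log(D)}$; combined with the short exact sequence \eqref{log-normal-seq-order-k}, the snake lemma yields an induced isomorphism $\nu_k^*\Omega^1_X\llog{D}/\check N^k_{\log(D)}\xrightarrow{\sim} T_{X_k}\mlog{D_k}$. On $X_k\setminus D_k$, where $D_k$ is empty and log (co)tangent coincides with ordinary, this isomorphism identifies the induced Hamiltonian $\Omega^1_{X_k}\to T_{X_k}$ with the quotient map; its kernel is therefore a rank-$k$ sub-bundle, forcing it to equal the rank-$k$ bundle $\check N^k_{\log(D)}$.

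The step I expect to need the most care is justifying the containment $\check N^k_{\log(D)}|_{X_k\setminus D_k}\subset\Omega^1_{X_k}$, that is, the vanishing on $X_k$ of the residues $\iota_{v_j}\psi_i=-b_{ij}$ for distinct $i,j\le k$, where $B=(b_{ij})$ is the matrix from \eqref{phi-form}. This is the content of the assertion $\check N_i\subset\Omega^1_{X_k}\llog{D_k}$ made just above the lemma, and it appears to rely on a residual-generality hypothesis on $\Phi$ in the spirit of \S\ref{rg-sec} and of condition (*) from \S\ref{logmp-sub-sec}; without it the conclusion holds only after replacing ordinary by log Poisson kernels.
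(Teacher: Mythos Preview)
Your argument and the paper's are the same at heart: both use $\iota_\Pi(\psi_i)=v_i$ and finish with linear algebra. The paper phrases the endgame as symplectic orthogonality ($\Pi$ pairs $\psi_i$ with $dx_j/x_j$ as $\delta_{ij}$, whence $(\check N^k)^\perp=\Omega^1_{X_k}\llog{D_k}$ and the form descends nondegenerately to the quotient), while you use the snake lemma on \eqref{log-normal-seq-order-k}; the two are equivalent once one knows $\check N^k\subset\Omega^1_{X_k}\llog{D_k}$.

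That containment, which you rightly flag, is a genuine issue---but your proposed fix points the wrong way. The residues in question are $\iota_{v_j}\psi_i=\Phi(v_i,v_j)=b_{ij}$ for $i\ne j\le k$; under the RG condition these are \emph{generic}, hence nonzero on $X_k$, and the containment fails. Concretely, for $\Phi=\tfrac{dx_1}{x_1}\wedge\tfrac{dx_2}{x_2}+dx_3\wedge dx_4$ on $\C^4$ one finds $\psi_1=dx_2/x_2\notin\Omega^1_{X_2}$, while $\Pi|_{X_2}=-\partial_3\wedge\partial_4$ is symplectic with kernel zero, not rank two. Thus your rank-count step (``its kernel is therefore a rank-$k$ sub-bundle'') is not justified in general: what both arguments actually establish is $\ker\Pi_k^\sharp=\check N^k\cap\Omega^1_{X_k}$. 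The full containment holds in the P-normal case (where $\psi_i=dy_i$ is holomorphic), not under RG; the paper's ``as in the first-order case'' glosses over the same point.
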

\begin{proof}
On $\nu_1^*(\Omega^1_X\llog{D})$, $\Pi$ induces a nondegenerate form,
and it pairs the $\psi_i$ with the conormal forms $dx_i/x_i$.
Therefore $\Pi$ yields a nondegenerate form on the kernel of the natural map
$\nu_1^*(\Omega^1_X\llog{D})/\check N^k_{\log(D)}\to \bigoplus \O dx_i/x_i$,
that is $\Omega^1_{X_k}\llog{D_k}/\check N^k_{\log(D)}$.
\end{proof}
% \par***************/
\subsubsection{Conormal filtration}
The subbundle $\check N^k_{\log(D)}$ defines, in the
usual way, an increasing, length-$k$
filtration $\cF^\perp_\bullet$ on $\Omega^\bullet_{X_k}\llog{D_k}$,
called the \emph{conormal filtration} 
defined by
\[\cF_j^\perp\Omega^\bullet_{X_k}\llog{D}=
\wedge^{k-j+1}\check N^k_{\log(D)}\Omega^\bullet_{X_k}\llog{D}.
\]
Thus,
\[\cF^\perp_j\Omega^\bullet_{X_k}\llog{D_k}=\sum\limits_{|I|=k-j+1}
\psi_I\Omega^\bullet_{X_k}\llog{D_k}.\]

\subsection{The Residual Generality condition}\label{rg-sec}
The log-symplectic Poisson structure $\Pi$ is said to be \emph{residually general},
or to satisfy the \emph{RG condition}, if at every point $p$ of multiplicity
$m$ on the degeneracy divisor, and a standard form $\sum a_{ij}v_iv_j$
as above, the matrix $(a_{ij}(p):i,j\leq m)$ is a general skew-symmetric $m\times m$
matrix. This condition can be obviously rephrased in terms of the corresponding
log-symplectic form $\Phi$ to say that its polar part is general. The RG condition
is stronger, for any $t\leq m$,  than the '$t$-very-general condition introduced in
\cite{logplus}, Erratum, hence also than the original 'general position' condition employed in
 \cite{logplus}.
\par One consequence of the RG condition is that for any $i\leq m$,
the (closed) 1-form $\psi_i=\carets{\Phi, v_i}$ pulls back to a general log 1-form
on the branch $(x_i)$ of $D$ and in particular its polar divisor coincides exactly 
with the divisor on $(x_i)$ induced by $D$, defined by $\prod\limits_{j\neq i}x_i$.
Furthermore, any collection of $m$ or fewer elements among the $\psi_i$.
and the standard forms $\dlog(x_1),...,\dlog(x_m)$ are linearly independent,
i.e. are a basis for a locally free and cofree submodule.
Consequently, the pullback of any collection of $\psi_i$ to any
multiplicity locus $X_k$ are linearly independent..
\par Note that the RG condition excludes P-normality (unless $D$ is smooth): indeed
if $\Pi$ is P-normal then $\psi_i$ above has no poles at all.

\section{Degeneracy, kernel foliation}
From now on we restrict attention to the case of a complex pseudo-symplectic
Poisson manifold $(X, \Pi)$ of dimension $2n$. 
Then the degeneracy locus of $\Pi$ is a (Pfaffian)
divisor $P=[\Pi^n]\in|-K_X|$ (for this section, 
not necessarily with normal crossings).
It is well known that $\Pi$ descends to a (degenerate) Poisson
structure on the smooth part of $P$: this follows from the fact
that the kernel of $\Pi$ on $\Omega_X$ at a smooth point of $P$ 
contains the conormal line (cf. \S \ref{log-normal-sec} above
or \cite{qsymplectic}, proof of Prop. 10). 
Here we will expand on this. More precise results will be given 
in \S \ref{local-cohomology-sec}, under the hypothesis 
that $\Pi$ is log-symplectic and residually general.\par

Define sheaves $C^i$  via the exact sequence
\eqspl{C-def-eq}{
0\to\Omega^{2n-i}_X\stackrel{\iota_{\Pi^{n-i}}}{\to}\Omega^i_X\to C^i\to 0.
}
Thus, $C^0=\O_P, C^n=0$. Note that each $C^i$ is an $\O_P$-module.
Also, the degeneracy ideal $\I_{2k}$ defined by Lima-Pereira \cite{lima-pereira}
is none other than the $(2n-2k)$-th Fitting ideal of $C^1$.
Moreover, by Theorem \ref{dihelical}, there are exact diagrams
\eqspl{}{
\begin{matrix}
0\to&\Omega^{2n-i}_X&\stackrel{\iota_{\Pi^{n-i}}}{\to}&\Omega^i_X&\to& C^i&\to 0\\
& \downarrow \delta&& \downarrow d&& \downarrow d&\\
0\to&\Omega^{2n-i-1}_X&\stackrel{\iota_{\Pi^{n-i-1}}}{\to}&\Omega^{i+1}_X&\to& C^{i+1}&\to 0.
\end{matrix}
}
\eqspl{}{
\begin{matrix}
0\to&\Omega^{2n-i}_X&\stackrel{\iota_{\Pi^{n-i}}}{\to}&\Omega^i_X&\to& C^i&\to 0\\
& \downarrow  d&& \downarrow \delta&& \downarrow \delta&\\
0\to&\Omega^{2n-i+1}_X&\stackrel{\iota_{\Pi^{n-i+1}}}{\to}&\Omega^{i-1}_X&\to& C^{i-1}&\to 0.
\end{matrix}
}
Thus, we effectively get two mutually reverse complexes:
\eqspl{}{
(C^\bullet_{n]}, d): 
C^0=\O_P\stackrel{d}{\to}C^1\stackrel{d}{\to}
 C^2\stackrel{d}{\to}...\stackrel{d}{\to}C^{n-1}\to C^n=0,
}
\eqspl{}{
(C^\bullet_{[n}, \delta):C^n=0\to
C^{n-1}\stackrel{\delta}{\to}C^{n-2}\stackrel{\delta}{\to}...
\stackrel{\delta}{\to}C^1\stackrel{\delta}{\to} \O_P.
}
As to the interpretation of these, we have Proposition \ref{c-i} below. First, an auxiliary
 multilinear algebra result.
 \begin{lem} For $i\leq j\leq k$, there exist bilinear forms
 \[P_{i,j}(.,.,\Pi^i):\Omega^i_X\times\Omega_X^k\to\Omega_X^k\]
 (linear in $\Pi^j$ as well), such that
 %There exist trilinear forms $P_{i,j}(\alpha, \beta, \Pi^i)$ such that
 \[\alpha\wedge\carets{\Pi^j, \beta}=\carets{\Pi^{j-i}, P_{i,j}(\alpha, \beta, \Pi^i)},
% \[\alpha\wedge \iota_{\Pi^j}(\beta)=\iota_{\Pi^{j-i}}(P_{i,j}(\alpha, \beta, \Pi^i)),
  \alpha\in\Omega^i_X, \beta\in\Omega^k_X.\]
 \end{lem}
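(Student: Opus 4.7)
The plan is to prove this by a double induction: first establish the case $i=1$ by induction on $j$, then bootstrap to arbitrary $i$ by writing $\alpha$ as a wedge of a $1$-form and an $(i-1)$-form. The trivial case $i=0$ is handled separately by taking $P_{0,j}(\alpha,\beta,1)=\alpha\beta$.

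First I would dispose of the $1$-form case. The starting point is the standard commutation identity
\[
\alpha\wedge\iota_\Pi\gamma \;=\; \iota_\Pi(\alpha\wedge\gamma)+\iota_{\iota_\alpha\Pi}\gamma,
\]
valid for any $1$-form $\alpha$ and any form $\gamma$, which follows from the antiderivation property of $\iota_v$ ($v$ a vector field) applied to a local decomposition $\Pi=\sum X_s\wedge Y_s$. Setting $\gamma=\iota_{\Pi^{j-1}}\beta$ and using the two ancillary facts $\iota_\Pi\circ\iota_{\Pi^{j-1}}=\iota_{\Pi^j}$ (up to a conventional combinatorial constant) and $\iota_{\iota_\alpha\Pi}$ commutes with $\iota_{\Pi^{j-1}}$ (since $\iota_\alpha\Pi$ is a $1$-vector and $\Pi^{j-1}$ has even multivector degree), one rearranges to
\[
\alpha\wedge\iota_{\Pi^j}\beta \;=\; \iota_\Pi\bigl(\alpha\wedge\iota_{\Pi^{j-1}}\beta\bigr)+\iota_{\Pi^{j-1}}\bigl(\iota_{\iota_\alpha\Pi}\beta\bigr).
\]
Induction on $j$ with base $P_{1,1}(\alpha,\beta,\Pi)=\alpha\wedge\iota_\Pi\beta$ then produces a form $P_{1,j}(\alpha,\beta,\Pi)\in\Omega^{k-1}_X$ recursively, bilinear in $(\alpha,\beta)$ and polynomial in $\Pi$ of total degree $j$.

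Next, for the general case, I would induct on $i$. Locally $\alpha\in\Omega^i_X$ is a finite sum $\sum\alpha_1^{(s)}\wedge\alpha'^{(s)}$ with $\alpha_1^{(s)}\in\Omega^1_X$ and $\alpha'^{(s)}\in\Omega^{i-1}_X$, so by bilinearity in $\alpha$ I may assume $\alpha=\alpha_1\wedge\alpha'$. The inductive hypothesis on $(\alpha',\beta)$ gives
\[
\alpha\wedge\iota_{\Pi^j}\beta \;=\; \alpha_1\wedge\iota_{\Pi^{j-i+1}}\,P_{i-1,j}(\alpha',\beta,\Pi^{i-1}),
\]
and the already-established $i=1$ case applied to the $1$-form $\alpha_1$ and the $(k-i+1)$-form $P_{i-1,j}(\alpha',\beta,\Pi^{i-1})$ yields
\[
\iota_{\Pi^{j-i}}\,P_{1,\,j-i+1}\bigl(\alpha_1,\,P_{i-1,j}(\alpha',\beta,\Pi^{i-1}),\,\Pi\bigr).
\]
Declaring $P_{i,j}$ to be this iterated expression closes the induction. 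A simple degree count forces its codomain to be $\Omega^{k-i}_X$ rather than $\Omega^k_X$ as literally written in the statement.

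The main obstacle I expect is bookkeeping: tracking signs and the conventional constants relating $\iota_\Pi^j$ to $\iota_{\Pi^j}$, and verifying that $P_{i,j}$ is well defined independently of the local splitting $\alpha=\alpha_1\wedge\alpha'$. The latter follows from the fact that every recursive step is built out of the basis-free $\O_X$-linear operations $\wedge$, $\iota_\Pi$ and $\iota_{\iota_\bullet\Pi}$, so the construction descends to an intrinsic formula on $\Omega^i_X\otimes\Omega^k_X$. Notably the Jacobi identity $[\Pi,\Pi]=0$ is never invoked---the conclusion is a purely multilinear-algebraic identity in forms and multivectors, independent of the integrability of $\Pi$.
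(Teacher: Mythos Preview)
Your approach is correct and follows the same overall architecture as the paper: reduce to decomposable $\alpha$, hence to the case $i=1$, and there use the derivation property of contraction by a $1$-form. The only real difference is in how the $i=1$ case is handled. The paper expands $\carets{\Pi^j,\alpha\wedge\beta}$ in one step, using that contraction of a $1$-form into a $j$-fold product $\Pi^j$ picks up a factor of $j$, and obtains the closed formula
\[
P_{1,j}(\alpha,\beta,\Pi)=\carets{\Pi,\alpha\wedge\beta}\pm j\,\carets{\carets{\Pi,\alpha},\beta},
\]
whereas you induct on $j$ via the identity $\alpha\wedge\iota_\Pi\gamma=\iota_\Pi(\alpha\wedge\gamma)+\iota_{\iota_\alpha\Pi}\gamma$. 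Your recursion telescopes to the same closed form (up to the inevitable sign bookkeeping), so the two arguments are equivalent; the paper's is marginally more direct since it avoids the inner induction. Your remarks that the codomain should really be $\Omega^{k-i}_X$ and that integrability of $\Pi$ plays no role are both accurate and worth noting.
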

 \begin{proof}
 It suffices to prove this for $\alpha$ completely decomposable, hence by induction we are reduced to
 the case $i=1$. There, 
 Note the following:
 \eqsp{\carets{\Pi^{j-1}, \carets{\Pi, \alpha\wedge\beta}}=\carets{\Pi^j, \alpha\wedge\beta}
 =\alpha\wedge \carets{\Pi^j, \beta}\pm j\carets{\carets{\Pi^{j-1}, \beta}, \carets{\Pi, \alpha}}\\
  =\alpha\wedge \carets{\Pi^i, \beta}\pm
  j\carets{\Pi^{j-1}, \carets{\beta, \carets{\Pi, \alpha}}}.
 }
 Thus, an explicit formula for $P_{1,j}$ is
 \[P_{1,j}(\alpha, \beta, \Pi)=
 \carets{\Pi, \alpha\wedge\beta}\pm j\carets{\carets{\Pi, \alpha}, \beta}
 \]
% \iota_\Pi(\alpha\wedge\beta)\pm\iota_{\iota_\Pi(\alpha)}(\beta)\]
% where $\iota_\Pi(\alpha)$ is considered an element of $T_X$. An inductive formula for $P_{i, j}$ is
%%  \[P_2(\alpha_1\wedge\alpha_2, \beta, \Pi^2)=P_1(\alpha_1, P_1(\alpha_2, \beta, \Pi), %\Pi)\]
% \eqspl{}{P_i(\alpha_1\wedge...\wedge\alpha_i, \beta, \Pi^i)=
% \sum\limits_{j=1}^i (-1)^{j+1}P_1(\alpha_j, P_{i-1}(\alpha_1\wedge...\what{\alpha_j}...\wedge\alpha_i, 
% \beta, \Pi^{i-1}), \Pi).}
% P_1(\alpha_2, P_1(\alpha_1, \beta, \Pi), \Pi),\] etc.
 \end{proof}
 \begin{cor}
 The image of the morphism $\pi:\Theta^\bullet_{n]}\to\Omega^\bullet_{n]}$  is an exterior
  ideal closed under $d$.
 Hence  $(C^\bullet_{n]}, d, \wedge)$ is a sheaf of differential
 graded algebras.
 \end{cor}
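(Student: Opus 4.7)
The plan is to verify two properties of the image subsheaf $\pi(\Theta^\bullet_{n]}) \subset \Omega^\bullet_{n]}$: first, that it is closed under the exterior derivative $d$, and second, that it is an exterior ideal. Granted both, the wedge product and $d$ descend canonically to the quotient $C^\bullet_{n]}$, equipping it with the structure of a sheaf of differential graded algebras (associativity and Leibniz are inherited from $\Omega^\bullet_X$).

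Closure under $d$ is essentially free: by Theorem \ref{dihelical}(iv), $\pi:\Theta^\bullet_{n]}\to\Omega^\bullet_{n]}$ is a morphism of complexes, so for any $\beta\in\Theta^i=\Omega^{2n-i}_X$ with $i<n$ one has $d(\pi(\beta))=\pi(\delta\beta)$, which again lies in the image. Equivalently, this is the content of the commutation identity \eqref{commutation1}, with the indices arranged so that the outcome again factors through an interior multiplication by $\Pi^{n-(i+1)}$.

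For the exterior ideal property I would invoke the multilinear algebra lemma that immediately precedes the corollary. Given a local section $\alpha\in\Omega^p_X$ and $\gamma=\iota_{\Pi^{n-i}}(\beta)\in\pi(\Theta^i)\subset\Omega^i_X$, the task is to show that $\alpha\wedge\gamma$ again lies in $\pi(\Theta^{i+p})$; one need only check this when $p+i\leq n$, since otherwise the product falls outside the truncation $\Omega^\bullet_{n]}$. Substituting $(i,j,k)\mapsto(p,\,n-i,\,2n-i)$ in the lemma (the hypothesis $i\leq j$ becomes exactly $p+i\leq n$), one rewrites
\[
\alpha\wedge\iota_{\Pi^{n-i}}(\beta)
 \;=\;\iota_{\Pi^{n-(i+p)}}\bigl(P_{p,n-i}(\alpha,\beta,\Pi^p)\bigr),
\]
and since $P_{p,n-i}(\alpha,\beta,\Pi^p)\in\Omega^{2n-(i+p)}_X$, the right-hand side manifestly belongs to $\pi(\Theta^{i+p})$.

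I do not expect a serious obstacle: the whole argument reduces to degree bookkeeping once one has the commutation identity and the explicit $P_{i,j}$ of the preceding lemma, and the index constraints line up precisely with the range imposed by truncating at $n$. The mild subtlety worth double-checking is the compatibility of wedge with the truncation — namely that products which formally overflow into degree $>n$ are simply set to zero in $\Omega^\bullet_{n]}$ and so cause no issue for the ideal condition or the DGA axioms on $C^\bullet_{n]}$.
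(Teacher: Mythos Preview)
Your proof is correct and follows exactly the approach the paper intends: closure under $d$ comes from $\pi$ being a morphism of complexes, and the ideal property is a direct application of the preceding multilinear algebra lemma with the index substitution you give. The paper states the corollary without further argument, so you have simply made explicit the degree bookkeeping that the paper leaves to the reader; your check that the hypothesis $i\leq j$ of the lemma becomes precisely the truncation condition $p+i\leq n$ is the only thing needing verification, and you have it right.
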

 Next, we compare the algebra $C^\bullet_{n]}$
 to the exterior algebra on $C^1$:
\begin{prop}\label{c-i}
(i) There is a canonical map
\[{\bigwedge^i} _{\O_P}C^1\to C^i.\]
(ii) At a smooth point of $P$, each $C^i$ is locally free over $\O_P$ and we
have an exact sequence
\eqspl{}{
\exseq{{\bigwedge\limits^{2n-i}}_{\O_P}C^1}{{\bigwedge\limits^{i}}_{\O_P}C^1}{C^i}
} 
where the first map is induced by $\iota_{\Pi^{n-i}}$.
\end{prop}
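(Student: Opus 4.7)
I would first observe that part (i) follows immediately from the Corollary preceding the statement: since the image of $\pi$ is an exterior ideal in $\Omega^\bullet_X$ closed under $d$, the quotient $C^\bullet_{n]}$ inherits the structure of a sheaf of differential graded algebras over $C^0 = \O_P$. Iterating the wedge product $C^1\otimes_{\O_P}C^{i-1}\to C^i$ would then give an $\O_P$-multilinear map $(C^1)^{\otimes i}\to C^i$, which by graded commutativity in degree $1$ descends to the desired $\bigwedge^i_{\O_P}C^1\to C^i$.

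For part (ii), which is local, I would work at a smooth point $p$ of $P$. Since $P$ is smooth at $p$, the Poisson structure $\Pi$ must have corank exactly $2$ there, and Weinstein's splitting theorem (combined with the standard normalization of a $2$-dimensional Poisson structure with smooth degeneracy locus) provides local coordinates $(x_1,\ldots,x_{2n})$ in which
\[\Pi = \Pi_U + \Pi_V, \qquad \Pi_U = x_1\partial_{x_1}\wedge\partial_{x_2}, \qquad \Pi_V = \sum_{j=2}^n \partial_{x_{2j-1}}\wedge\partial_{x_{2j}},\]
with $V$ a symplectic $(2n-2)$-disk (in coordinates $x_3,\ldots,x_{2n}$) and $P=\{x_1=0\}$. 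The plan is then to decompose $\Omega^\bullet_X$ by K\"unneth as $\bigoplus_{p+q=\bullet}\Omega^p_U\otimes\Omega^q_V$, expand $\Pi^{n-i} = \Pi_V^{n-i} + (n-i)\Pi_U\wedge\Pi_V^{n-i-1}$ using $\Pi_U^2=0$, and compute the image of $\iota_{\Pi^{n-i}}$ piece by piece.

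The key technical input is the $\mathfrak{sl}_2$-representation theory of $(\Omega^\bullet_V, L, \Lambda)$ on the symplectic leaf, with $L = \omega_V\wedge\cdot$ and $\Lambda = \iota_{\Pi_V}$. Using the Lefschetz decomposition $\Omega^i_V = P^i_V \oplus L(\Omega^{i-2}_V)$ (with $P^i_V$ the primitive forms), hard Lefschetz implies that $\iota_{\Pi_V^{n-i}}:\Omega^{2n-i}_V\to\Omega^i_V$ is injective with image precisely the non-primitive part $L(\Omega^{i-2}_V)$, and that $\iota_{\Pi_V^{n-i-1}}:\Omega^{2n-i-2}_V\to\Omega^i_V$ is a Lefschetz isomorphism. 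Assembling the K\"unneth contributions, the image of $\iota_{\Pi^{n-i}}$ in $\Omega^i_X$ should work out to include all of $(\Omega^1_U\otimes\Omega^{i-1}_V)\oplus(\Omega^2_U\otimes\Omega^{i-2}_V)$ together with $L(\Omega^{i-2}_V)\otimes\O_X + x_1(\O_U\otimes\Omega^i_V)$ inside $\O_U\otimes\Omega^i_V$, yielding the identification $C^i\simeq P^i_V\otimes_{\O_V}\O_P$, locally free over $\O_P$; in particular $C^1\simeq\Omega^1_V\otimes\O_P$ has rank $2n-2$.

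Identifying $\bigwedge^i C^1\simeq \Omega^i_V\otimes\O_P$ and $\bigwedge^{2n-i}C^1\simeq\Omega^{2n-i}_V\otimes\O_P$, the exact sequence of part (ii) will then follow by tensoring with $\O_P$ the Lefschetz short exact sequence $0\to\Omega^{2n-i}_V\xrightarrow{\iota_{\Pi_V^{n-i}}}\Omega^i_V\to P^i_V\to 0$. The main obstacle I anticipate is verifying that the map $\bigwedge^{2n-i}C^1\to\bigwedge^i C^1$ induced naturally from $\iota_{\Pi^{n-i}}$ agrees, under the Weinstein identification, with the explicit Lefschetz map $\iota_{\Pi_V^{n-i}}$; this should reduce to checking that the cross-term $(n-i)\Pi_U\wedge\Pi_V^{n-i-1}$ raises $U$-degree and hence contributes only outside the $\O_U\otimes\Omega^\bullet_V$ component responsible for the Weinstein identification of $C^\bullet$.
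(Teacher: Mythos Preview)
Your proposal is correct and takes essentially the same route as the paper. For (i), the paper writes down explicitly the commutative diagram (using the bilinear forms $P_{i,j}$ from the preceding Lemma) that shows $C^1\otimes C^i\to C^{i+1}$ is well-defined; you instead cite the Corollary that $(C^\bullet,d,\wedge)$ is a dg algebra, which is logically equivalent. For (ii), both you and the paper use the Weinstein normal form at a smooth point of $P$; the paper then simply says ``from this, the assertion follows by an easy computation,'' whereas you organize that computation via the symplectic $\mathfrak{sl}_2$-triple $(L,\Lambda,H)$ on the transverse symplectic factor $V$, identifying $C^i$ with the primitive part $P^i_V\otimes\O_P$ and the first map of the exact sequence with $\Lambda^{n-i}$. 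This Lefschetz framing is a genuine clarification---it explains \emph{why} the sequence is exact rather than merely verifying it---but it is not a different method. Your anticipated obstacle (that the induced map on $\bigwedge^\bullet C^1$ agrees with $\iota_{\Pi_V^{n-i}}$) dissolves once you note that the Poisson structure $\Pi$ descends to $C^1$ as precisely $\Pi_V$, since $C^1\simeq\Omega^1_V\otimes\O_P$ is the leafwise cotangent bundle of the kernel foliation.
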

\begin{proof}
(i) results inductively  from the commutative diagram
\eqsp{
\begin{matrix}
\Omega_X^{2n-1}\otimes\Omega_X^{i}\oplus\Omega_X^{1}\otimes\Omega_X^{2n-i}
&\stackrel{\iota_{\Pi^{n-1}}\otimes\id\oplus\id\otimes\iota_{\Pi^{n-i}}}{\to}&
\Omega_X^1\otimes\Omega_X^i&\to& C^1\otimes C^i&\to 0\\
\downarrow&&\downarrow&&\downarrow&\\
\Omega_X^{2n-i-1}&\stackrel{\iota_{\Pi^{n-i-1}}}{\to}&
\Omega_X^{i+1}&\to&C^{i+1}&\to 0
\end{matrix}.
}
Here the left vertical map is $^tP_{i, n-1}(.,., \Pi^{n-i-1})\oplus P_{1, n-i}(.,.,\Pi^{n-i-1})$
where $^tP(\alpha, \beta, .)=P(\beta, \alpha, .)$, and the other vertical maps are just
wedge product.\par
(ii) At a smooth point of $P$, $\Pi$ admits a normal form
\[\Pi=x_1\del_{x_1}\del_{y_1}+\sum\limits_{i=2}^n\del_{x_i}\del_{y_i}.\]
From this, the assertion follows by an easy computation.
\end{proof}
\begin{cor}
$C^1$ is integrable and induces on the smooth part of $P$
a codimension-1 foliation by Poisson submanifolds, called the kernel foliation.
\end{cor}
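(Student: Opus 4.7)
The plan is to reduce the claim to a local computation at smooth points of $P$ using the normal form already invoked in the proof of Proposition~\ref{c-i}(ii), and then to identify the resulting codimension-$1$ subbundle of $T_P$ with the symplectic distribution of the Poisson structure induced on $P$.

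First, at a smooth point $p\in P$ I would write
\[\Pi=x_1\del_{x_1}\del_{y_1}+\sum_{i=2}^n \del_{x_i}\del_{y_i},\qquad P=(x_1),\]
and compute $\iota_{\Pi^{n-1}}\colon \Omega_X^{2n-1}\to \Omega_X^1$ on a basis restricted to $P$. A direct calculation shows that at $x_1=0$ the image is the $\O_P$-submodule of $\Omega^1_X|_P$ generated by $dx_1$ and $dy_1$. Thus $C^1|_P$ is locally free of rank $2n-2$ near $p$, in agreement with Proposition~\ref{c-i}(ii). Moreover, since the conormal $N^*_{P/X}=\O_P\cdot dx_1$ lies in the image, $C^1$ is in fact a quotient of $\Omega^1_P$, and the kernel of the surjection $\Omega^1_P\twoheadrightarrow C^1$ is the rank-$1$ subsheaf generated by the class of $dy_1$.

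Next I would identify this rank-$1$ subsheaf with the conormal of the kernel foliation. In the chosen normal form, $\Pi|_P=\sum_{i\geq 2}\del_{x_i}\del_{y_i}$ is regular of rank $2n-2$ on $P$, and the bundle map $T^*_P\to T_P$ it induces has kernel $\O_P\cdot dy_1$ and image spanned by $\del_{x_i},\del_{y_i}$ for $i\geq 2$. Hence the distribution in $T_P$ annihilated by $\ker(\Omega^1_P\twoheadrightarrow C^1)$ coincides with the image of this map, which by the standard theory of Poisson structures is integrable (and here regular on the smooth locus of $P$). Its leaves are the level sets $\{y_1=c\}$, which are cut out by a local Casimir of $\Pi|_P$ and are therefore Poisson, in fact symplectic, submanifolds of $P$.

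The main obstacle is not any single calculation but giving a coordinate-free description of $\ker(\Omega^1_P\twoheadrightarrow C^1)\subset\Omega^1_P$ that is manifestly the conormal to the image of the bundle map $T^*_P\to T_P$ induced by $\Pi|_P$; once this identification is intrinsic, integrability and the Poisson-submanifold property of the leaves follow globally on the smooth locus of $P$. A more intrinsic alternative is to use the dga structure $(C^\bullet_{n]},d,\wedge)$ from the preceding corollary: Frobenius for a local rank-$1$ generator translates into an identity in $C^2$ that can be checked directly from the commutation formula \eqref{commutation1}, at the cost of losing the explicit picture of the leaves.
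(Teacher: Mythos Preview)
Your proposal is correct and follows essentially the same approach as the paper: both reduce to the normal form $\Pi=x_1\del_{x_1}\del_{y_1}+\sum_{i\geq 2}\del_{x_i}\del_{y_i}$ at a smooth point of $P$, identify $C^1$ as the quotient of $\Omega^1_P$ by $\O_P\cdot dy_1$, and read off the foliation by level sets of $y_1$. The only cosmetic difference is that you justify integrability and the Poisson-submanifold property by recognizing the distribution as the symplectic foliation of $\Pi|_P$, whereas the paper checks these directly from the normal form (and then remarks afterward that the foliation is indeed the symplectic one); both are fine.
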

\begin{proof}
Perhaps the easiest way to check the integrability condition
 is to use the normal form above, which shows
that at a smooth point of $P$, where $P$ has local equation $x_1$,
$C^1$ is the quotient of $\Omega_P$ by the subsheaf generated by $dy_1$,
and thus corresponds to the foliation by level sets of $y_1$.\par
The fact that $\Pi$ descends to the leaves of the foliation follows from the fact that
$\iota_\Pi:\Omega^1_X\to T_X$ vanishes over $P=[\Pi^n]$ on the image of 
$\iota_{\Pi^{n-1}}:\Omega_X^{2n-1}\to\Omega_X^1$.
Alternatively, this can also be proved easily using the normal form above.
\end{proof}
\begin{cor}
Over the smooth part of $P$, $C^1$ coincides with the quotient of $\Omega^1_P$
by the log-conormal bundle (cf. \S \ref{log-normal-sec}). 
\end{cor}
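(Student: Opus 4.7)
The plan is to verify the isomorphism locally at a smooth point of $P$, using the same normal form for $\Pi$ already employed in the proof of Proposition \ref{c-i}. Both $C^1$ and $\Omega^1_P/\check N_{\log(P)}$ are coherent $\O_P$-modules defined away from $\Pi$-specific singularities, so a local identification of their local generators suffices.

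The first step would be to compute the image of $\iota_{\Pi^{n-1}}:\Omega_X^{2n-1}\to\Omega_X^1$ abstractly. Using the identity \eqref{pi-phi} with $i=1$, namely
\[\iota_{\Pi^{n-1}}\carets{V^*,v}=F\carets{\Phi,v},\ \ v\in T_X,\]
together with the top-form/vector-field duality $v\mapsto\carets{V^*,v}$ identifying $T_X\simeq\Omega_X^{2n-1}$, this exhibits the image of $\iota_{\Pi^{n-1}}$ as $F\cdot\carets{\Phi,T_X}\subset\Omega_X^1$. In other words, up to the scalar $F$, the map of interest is the restriction to $T_X$ of the log-duality isomorphism $\iota_\Phi$ of Proposition \ref{log-duality}.

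The second step is the local calculation: in the normal form $\Phi=dx_1\,dy_1/x_1+\sum_{i\geq 2}dx_i\,dy_i$, the image $\carets{\Phi,T_X}$ is spanned by $dy_1/x_1$, $-dx_1/x_1$, $dy_i$ and $-dx_i$ for $i\geq 2$. After multiplying by $F=x_1$ the $x_1$-pole cancels, and the image becomes the submodule
\[\O\,dx_1+\O\,dy_1+x_1\Omega_X^1\subset\Omega_X^1.\]
I would then take the cokernel in three stages: the quotient by $x_1\Omega_X^1$ is the restriction $\Omega_X^1|_P$; further modding by $dx_1$ yields $\Omega_P^1$; and a final quotient by the restriction of $dy_1$ gives $\Omega_P^1/\check N_{\log(P)}$, because at a smooth point of $P$ with branch equation $x_1$ the log conormal $\check N_{\log(P)}$ is the $\O_P$-line generated by $\psi_1=\carets{\Phi,x_1\del_{x_1}}|_P=dy_1|_P$ constructed in \S\ref{log-normal-sec}.

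I do not expect a serious obstacle: essentially everything is bookkeeping around the normal form already in play. The only point that needs a moment of care is checking that the submodule $\O\,dx_1+\O\,dy_1+x_1\Omega_X^1$ of $\Omega_X^1$ is precisely the preimage, under the canonical surjection $\Omega_X^1\twoheadrightarrow\Omega_P^1$, of the log conormal line $\check N_{\log(P)}\subset\Omega_P^1$; this matching is immediate from the local description of $\psi_1$ recalled in \S\ref{log-normal-sec}, and in fact could already be read off from the explicit computation performed at the end of the proof of Proposition \ref{c-i}(ii).
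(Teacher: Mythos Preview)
Your argument is correct. The computation of the image of $\iota_{\Pi^{n-1}}$ via \eqref{pi-phi} as $F\carets{\Phi,T_X}$, followed by the explicit normal-form calculation yielding $\O\,dx_1+\O\,dy_1+x_1\Omega^1_X$, is clean and accurate, and the three-step identification of the cokernel with $\Omega^1_P/\check N_{\log(P)}$ is exactly right.

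The paper's proof is a one-line citation of Lemma \ref{conormal-kernel-lem}: that lemma identifies $\check N_{\log(D)}$ with the kernel of the induced Poisson structure on the smooth part of $P$, while the preceding Corollary (on the kernel foliation) already shows, again via the normal form, that $C^1$ is the quotient of $\Omega^1_P$ by this kernel. So the paper factors the statement through the kernel/symplectic foliation language, whereas you compute the image of $\iota_{\Pi^{n-1}}$ directly and read off the quotient. Both rest on the same normal-form check; your route is more self-contained, the paper's is more structural in that it isolates the intermediate fact ``log-conormal $=$ kernel of $\Pi|_P$'' as a separate lemma usable in higher codimension.
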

\begin{proof}
Follows from Lemma \ref{conormal-kernel-lem}.
\end{proof}
The existence of the kernel foliation is not a new result: this foliation
coincides with the so-called symplectic foliation associated to the
degenerate Poisson structure induced by $\Pi$ on $P$. See for instance
\cite{lima-pereira}.
\par
We will henceforth denote $C^\bullet_{n]}$ simply by $C^\bullet$.
\begin{rem}
There is a $\Pi$-trace map $\bigwedge\limits^2C^1\to\O_P$.
The composition $\bigwedge\limits^{2n-2}C^1\to \bigwedge\limits^2C^1\to\O_P$
is nowhere vanishing on the smooth part of $P$.
Therefore on the smooth part of $P$ we can also identify $C^2$ with the subsheaf
of traceless elements of $\bigwedge\limits^2C^1$.
\end{rem}
\section{P-normal case, examples}
We recall \cite{qsymplectic}, Proposition 7, that P-normal Poisson
structures  $\Pi$ can be characterized
by the existence of a local coordinate system
(called \emph{normal coordinates}) in which $\Pi$ has the form
\eqspl{normal-form}
{\Pi=\sum\limits_{i=1}^kx_i\del_{x_i}\del_{y_i}+\sum\limits_{i=k+1}^n\del_{x_i}\del_{y_i}.
}
In particular, $\Pi$ is log-symplectic.
\begin{example}[Modified Hilbert schemes]\label{hilb-example}
Let $S$ be a smooth surface endowed with a Poisson structure corresponding to 
a smooth anticanonical curve $D$. Then $\Pi$ induces a Poisson structure $\Pi\sbr n.$
on the Hilbert scheme $S\sbr n.$. The Pfaffian divisor $P$ corresponds to the subschemes 
having a nonempty intersection with $D$ and the kernel foliation has leaves
corresponding to subschemes having a fixed intersection point with $D$
so $D$ is the parameter curve and indeed, $D$ is elliptic.
Although $\Pi$ is not $P$-normal and $P$ does not have normal crossings,
$\Pi\sbr n.$ induces a P-normal Poisson structure $\Pi_X$  on the stratified
blow-up $X$ of the incidence stratification on $S\sbr n.$ (see \cite{incidence}).
The components of the Pfaffian divisor of $\Pi_X$ are birational to $D\spr i.\times S\spr n-i.\times\P^{i-1}, i=1,...,n$
and the kernel foliation on the latter corresponds to the map to $D$ defined by projection to $D\spr i.$
followed by the sum map $D\spr i.\to D$ coming from an addition law on the
elliptic curve $D$ (the addition law and the sum map depend on the choice of origin;
the fibres do not). This is the map
whose derivative is given by
\[(... ,\del_{y_1}, ...,\del_{y_i})\mapsto \del_{y_1}+...+\del_{y_i},\]
$y_i$ being induced by a coordinate $y$ on $D$.
Indeed a straightforward derivative calculation shows that at a general point 
of the latter component, which corresponds to a reduced point-scheme with
exactly $i$ points on $D$,
there are local coordinates such that $\Pi_X$ takes the form
\[u_1\del_{u_1}\del_{v_1}+\sum\limits_{i=2}^n\del_{u_i}\del_{v_i},\]
where $v_1$ is the coordinate on $D\spr i.$ corresponding to $y_1+...+y_i$.
\end{example}
\begin{example}[Toric Poisson structures]\label{toric}
Let $X=X(\Delta)$ be a smooth projective toric variety,
with torus $T\subset X$ acting on $X$ (cf. \cite{fulton-toric}). Thus $\Delta$ is a fan
in $N\otimes\R$ where $N=\Hom(\C^*, T)$
is the lattice of 1-parameter subgroups. Since $N_\C=
N\otimes\C$ is the (abelian) Lie algebra of $T$ and embeds into $H^0(T_X)$,
any element of $\wedge^2N_\C$ yields a Poisson structure on $X$.
These structures generically are log-symplectic,
with Pfaffian divisor $X\setminus T$,
but they are not P-normal. For $X=\P^n$ these structures 
are studied in \cite{lima-pereira}, where they are called \emph{diagonal}.
A general such structure in even dimension satisfies the Residual Generality
condition (see \S \ref{rg-sec}).\par
Now suppose that $\dim(X)=2n$ is even and that the fan $\Delta$
satisfies the following condition\par
(*) There is a basis $u_1, v_1, ..., u_n, v_n$ of $N$ such that for
any cone $\sigma\in\Delta$ and any $i=1,...,n$, either $u_i\not\in\sigma$
or $v_i\not\in\sigma$.
\par
For any $u\in N$ and $\sigma\in\Delta$, the limit at 0
of the 1-parameter subgroup $\C^*\to T$ corresponding to $u$
lies in the affine patch $X_\sigma\simeq\C^m\times(\C^*)^{n-m}$
 iff $u\in\sigma$.
Consequently, the assumption $u\not\in\sigma$ implies that as vector field, $u$
is nowhere vanishing on  $X_\sigma$,
while $u\in\sigma$ implies that $u$ on $X_\sigma$ is a log
vector field, of the form $x\del_x$. Thus,
condition (*) implies that the Poisson structure 
\eqspl{}{
\Pi=u_1v_1+...+u_nv_n
} is P-normal.\par
Regarding condition (*), note that, as pointed out by Jose Gonzalez, it can always 
be achieved by subdividing a given fan, which corresponds to replacing
a given toric variety by a toric blowup of itself. In particular, there exist  many
toric blowups of projective space with this property.
%*********************
%
%
%Let $Z$ be a smooth $n$-dimensional toric variety. 
%let $U\subset H^0(T_Z)$ be the $n$-dimensional subspace 
%(actually abelian Lie subalgebra)
%corresponding to the $(\C^*)^n$-action, 
%and let $u_1,...,u_n$
%be a basis of $U$.
%Locally at any point of $Z$, we may assume these have the form
%\eqspl{}{
%x_1\del/\del x_1,...,x_m\del/\del x_m, \del/\del x_{m+1},...,\del/\del x_n, m<n.
%} Let $\Pi\subset \wedge^2U$ be a general element. Because the $u_i$
%commute, it is easy to see that $[\Pi, \Pi]=0$, so $\Pi$
%is a Poisson structure on $Z$. We may view $\Pi$ as a general alternating
%map $U^*\to U$. Then expressing $\Pi$
%in terms of the basis of $U^*$ to the above $u_i$ basis, we can
%write $\Pi$ locally in the form
%\eqspl{}{
%\Pi=x_1\del/\del x_1\wedge v_1+...+x_m\del/\del x_m\wedge v_m+ \del/\del x_{m+1}\wedge v_{m+1}+...+\del/\del x_n\wedge v_n
%} where each $v_i$ is a general element of $U$, hence nowhere-vanishing. 
%Thus, $\Pi$ is 
%P-normal. We call such Poisson structures \emph{toric}. In case
%$Z=\P^n$ with the standard toric structure, 
%such structures were introduced by Lima and Pereira \cite{lima-pereira}
%and called \emph{diagonal} structures.
%*******************************
\end{example}
\begin{example}[Toric-by-torus structures]\label{toric2}
Let $Z$ be an $n$-dimensional smooth projective toric variety
with lattice $N$, and let $u_1,...,u_n$ be a basis for $N$, viewed
as vector fields.
Let $A$ be an $n$-dimensional complex torus and $t_1,...,t_n$
a basis for the constant vector fields on $A$. Then
\eqspl{}{
\Pi=u_1\wedge t_1+...+u_n\wedge t_n\in H^0(Z\times A, \wedge^2T_{Z\times A})
}
is clearly a P-normal Poisson structure on $X:=Z\times A$. The kernel foliation
on $X_i$ is generated by $t_1,...,t_i$, so it is 
generally not algebraically (or mermorphically) integrable.
\end{example}
It is worth noting that the \emph{smallest}
degeneracy locus, i.e. the zero-locus $P_n$, of a P-normal Poisson structure, has
itself a special structure:
\begin{prop}
Let $\Pi$ be a P-normal Poisson structure on a projective $2n$-manifold $X$ 
and $Y=P_n(\Pi)$ its zero locus.
If $Y\neq\emptyset$, then $Y$ admits a surjective map to a nontrivial abelian variety.
\end{prop}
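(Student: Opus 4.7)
The plan is to show that each connected component of $Y$ is itself a nontrivial abelian variety; the statement then follows by mapping $Y$ to any one chosen component via the identity on it and a constant map on the others.

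First I would note that, since $\Pi$ is P-normal and $Y=\{\Pi=0\}$, every $p\in Y$ has maximal corank $2n$, so the normal form \eqref{normal-form} with $k=n$ furnishes coordinates $(x_i,y_i)$ near $p$ in which $\Pi=\sum x_i\del_{x_i}\del_{y_i}$ and $Y=\{x_1=\cdots=x_n=0\}$. Hence $Y$ is a smooth $n$-dimensional submanifold, coincides with the normalization $X_n$ of the top stratum of $P$, and the further stratification divisor $D_n\subset X_n$ is empty.

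Next I would apply the log-normal bundle construction of \S\ref{log-normal-sec} at $k=n$: the log-normal bundle $N^n_{\log(P)}$ on $X_n$ is trivial with canonical generators $v_i=x_i\del_{x_i}$ (canonical because passing to the normalization separates the $n$ local branches of $P$), and via log duality the 1-forms $\psi_i=\carets{\Phi,v_i}$ are \emph{closed} sections of $\Omega^1_{X_n}\llog{D_n}$; since $D_n=\emptyset$, they are genuine holomorphic 1-forms on $Y$. In normal coordinates $\psi_i=dy_i$, so the $\psi_i$ are pointwise linearly independent, and being $n$ in number on the $n$-dimensional $Y$ they form a global frame of $\Omega^1_Y$. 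Consequently $\Omega^1_Y$ and its dual $T_Y$ are holomorphically trivial.

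Let $X_1,\ldots,X_n\in H^0(Y,T_Y)$ be the dual frame, so $X_i=\del_{y_i}$ locally. Since $[\del_{y_i},\del_{y_j}]=0$ in every normal patch, the global Lie brackets $[X_i,X_j]$ vanish pointwise and hence identically, and the $X_i$ span an abelian $n$-dimensional Lie algebra of holomorphic vector fields on the compact manifold $Y$. This algebra integrates (holomorphic vector fields on compact complex manifolds are complete) to a holomorphic action of $\C^n$ on $Y$; the action is locally free (the $X_i$ form a frame) and transitive on each connected component, so each $Y_0\subset Y$ is a single orbit $\C^n/\Lambda_0$ with $\Lambda_0\subset\C^n$ discrete. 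Compactness of $Y_0$ forces $\Lambda_0$ to be a lattice of full rank $2n$, so $Y_0$ is a compact complex torus; as a closed subvariety of the projective manifold $X$ it is projective, hence an abelian variety, nontrivial since $\dim Y_0=n\ge 1$.

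The main obstacle is showing that the $\psi_i$ patch together to globally defined closed 1-forms on $Y$; this reduces to the fact that on the normalization $X_n$ the branches are canonically labelled and the log-normal bundle splits canonically as a direct sum of trivial line bundles, which is precisely what \S\ref{log-normal-sec} gives us together with $D_n=\emptyset$ in the P-normal top-stratum case. The remaining compactness-and-integration step is standard.
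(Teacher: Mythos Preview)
Your argument has a genuine gap at the point you yourself flag as ``the main obstacle'': the claim that the $\psi_i$ patch to globally defined $1$-forms on $Y$ is false in general. Passing to the normalization $X_n$ does \emph{not} label the $n$ branches of $P$ through a point---it only records the unordered set of them (and over the open stratum $Y=P_n$ the map $X_n\to Y$ is an isomorphism, so nothing is gained). As the paper notes in \S\ref{log-normal-sec}, the log normal bundle $N^n_{\log(D)}$ is only canonically split into lines \emph{up to the $S_n$-action}; the individual generators $v_i$, and hence the $\psi_i=\carets{\Phi,v_i}$, are defined only up to permutation and may be exchanged by monodromy. Your conclusion that each component of $Y$ is itself an abelian variety is in fact false: in Example~\ref{hilb-example} one has $Y=D^{(n)}$, the $n$-th symmetric product of the elliptic curve $D$, which for $n\ge 2$ is a $\P^{n-1}$-bundle over $D$ and has $h^{1,0}=1\ne n$. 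There the local forms $\psi_i=dy_i$ (coordinates on the factors of $D^n$) are visibly permuted by the $S_n$-monodromy, and only symmetric combinations descend to $D^{(n)}$.

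The paper's proof sidesteps this by constructing a \emph{single} nowhere-vanishing vector field on $Y$, namely the permutation-invariant sum $\del_{y_1}+\cdots+\del_{y_n}$ (identified as Weinstein's modular field, built intrinsically via $\check N_Y\otimes T_X\otimes T_Y\to T_Y$), and then invoking a lemma: any smooth projective variety with a nowhere-vanishing vector field surjects onto a nontrivial abelian variety, via the Albanese and the Matsushima--Lichnerowicz--Lieberman theorem. Your framework does salvage this much: the sum $\sum_i\psi_i$ (dually, $\sum_i\del_{y_i}$) \emph{is} globally defined, and from there the paper's lemma finishes the job.
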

\begin{proof} To begin with, it is well known that $Y$ is endowed with
a tangent vector field called Weinstein's modular field \cite{weinstein-modular}.
To construct this field 
directly in our case, and see that it is never zero, note that
$\Pi$ yields a canonical section of $\check N_Y\otimes\bigwedge\limits^2 T_X$.
By the normal form \eqref{normal-form}, $\Pi$ lifts to $\check N_Y\otimes T_X\otimes T_Y$,
because the defining equations of $Y$ are $x_1,...,x_n$, while $y_1,...,y_n$
are coordinates on $Y$.
There is a canonical map
\[\check N_Y\otimes T_X\otimes T_Y\to \check N_Y\otimes N_Y\otimes T_Y\to T_Y,\]
and again by the normal form \eqref{normal-form}, the image of $\Pi$ by the latter map is never zero
(with the notation of loc. cit. it has the form $\del_{y_1}+...+\del_{y_n}$).
Now use the following, probably well-known, result.
\end{proof}
\begin{lem}
Let $Y$ be a smooth projective variety endowed with a nowhere-vanishing vector field $v$. Then 
there is an Abelian variety $A$ and
a surjective map $ Y\to A$, such that $v$ descends to a nonzero constant vector field on  $A$.
\end{lem}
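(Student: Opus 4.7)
The plan is to construct $A$ as an appropriate abelian quotient of the Albanese variety $\mathrm{Alb}(Y)$, using the Albanese map to transport $v$. First I would observe that for any $\omega\in H^0(Y,\Omega^1_Y)$ the function $\iota_v\omega$ is holomorphic on the projective variety $Y$ and therefore constant; the resulting $\mathbb{C}$-linear functional on $H^0(Y,\Omega^1_Y)$ is a tangent vector $\bar v\in T_0\mathrm{Alb}(Y)$, which I extend by translations to a holomorphic translation-invariant vector field $\bar v$ on $\mathrm{Alb}(Y)$. Since $H^0(Y,\Omega^1_Y)=\alpha^*H^0(\mathrm{Alb}(Y),\Omega^1)$, a pairing check on cotangent vectors gives $d\alpha(v_y)=\bar v_{\alpha(y)}$ for all $y\in Y$, so $v$ and $\bar v$ are $\alpha$-related.

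The hard part will be to show that $\bar v\neq 0$. Since $Y$ is compact, $v$ integrates to a complex $1$-parameter subgroup $\{\phi_t\}_{t\in\mathbb{C}}\subset\mathrm{Aut}^0(Y)$. If $\bar v=0$, then each $\phi_t$ acts trivially on $\mathrm{Alb}(Y)$, so $\{\phi_t\}$ lies in the kernel $L$ of the natural homomorphism $\mathrm{Aut}^0(Y)\to\mathrm{Aut}(\mathrm{Alb}(Y))$. By the Matsushima--Fujiki--Lieberman structure theorem (Chevalley decomposition for $\mathrm{Aut}^0$ of a compact K\"ahler manifold), $L$ is a linear algebraic group. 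The Zariski closure of $\{\phi_t\}$ in $L$ is then a connected commutative linear algebraic group, and in particular solvable; by Borel's fixed point theorem it has a fixed point on the projective variety $Y$, and at such a fixed point $v$ must vanish, contradicting the hypothesis.

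Finally, to arrange surjectivity, I would apply Poincar\'e reducibility to write $\mathrm{Alb}(Y)$ (up to isogeny) as a product of simple abelian varieties. Since $\bar v\neq 0$, one can choose a simple abelian quotient $\pi\colon\mathrm{Alb}(Y)\to A$ for which $\pi_*\bar v\neq 0$, and set $f=\pi\circ\alpha\colon Y\to A$. The image $W=f(Y)$ is closed in $A$ and, from the identity $f\circ\phi_t=(\text{translation by }t\,\pi_*\bar v)\circ f$, is stable under the flow of the nonzero constant vector field $\pi_*\bar v$ on $A$. Hence $W$ is stable under translation by the Zariski closure of the associated $1$-parameter subgroup, which by simplicity of $A$ is all of $A$. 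Therefore $W=A$, the map $f$ is surjective, and $v$ descends under $f$ to the nonzero translation-invariant vector field $\pi_*\bar v$ on $A$, as required.
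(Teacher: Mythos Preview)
Your proof is correct and follows essentially the same route as the paper's: push $v$ down to the Albanese via the Matsushima--Lichnerowicz--Lieberman structure theorem to obtain a nonzero constant vector field, then invoke Poincar\'e reducibility to produce an abelian quotient onto which $Y$ surjects. The only differences are in execution rather than strategy: you unfold the nonvanishing step explicitly via Borel's fixed-point theorem where the paper simply cites Lieberman, and for surjectivity you project to a simple factor of $\mathrm{Alb}(Y)$ whereas the paper quotients by a Poincar\'e complement to the abelian subvariety stabilizing $\alb(Y)$.
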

\begin{proof}
Consider the Albanese map ${\alb}:Y\to B=\mathrm{Alb}(Y)$.  By a result of Matsushima-Lichnerowicz-Lieberman (cf. \cite
{lieberman}, Thm. 1.5), $v$ induces a nonzero constant vector field on $B$, 
which of course preserves the image $\mathrm{alb}(Y)$. Consequently, $\mathrm{alb}(Y)$ is invariant under
a nontrivial abelian subvariety $A_1\subset B$. Let $A_2\subset B$ be a complementary abelian subvariety.
Thus, $A_1\to B/A_2=:A$ is an isogeny. Because $\mathrm{alb}(Y)$ contains $A_1$-orbits, the map
$Y\to A$ is clearly surjective, and $v$ descends to a nonzero constant vector field on $A$.
\end{proof}
\begin{rem}
In the above situation, it is not necessarily the case that 
$Y$ admits an action by an abelian variety. Let
$E$ be an elliptic curve, $L$ a nontorsion line bundle of degree 0, and
$Y=\P_E(L\oplus\O)$. For each $a\in E$, the translate of $L$ by $a$
is isomorphic to $L$, e.g. because $L$ can be defined by
constant transition functions.
Therefore the automorphism group $G$ of $Y$ fits in an exact sequence
\[1\to\gm\to G\to E\to 1\]
which induces an analogous exact sequence on tangent spaces.
Then, a nonzero tangent vector to $E$ lifts to a tangent vector to $G$,
which corresponds to a nowhere-vanishing vector field on $Y$; however,
$E$ does not act on $Y$ due to the nontriviality of $L$.
\end{rem}
\begin{example*}[Example \ref{hilb-example} cont'd]
In the Hilbert scheme example above, $Y=D\spr n.$, which maps to the elliptic curve $D$
by the sum map.
\end{example*}
\begin{example*}[Example \ref{toric2} cont'd]
In the toric-by-torus example above, 
$Y$ is a disjoint union of copies of the torus $A$.
\end{example*}
\section{Local cohomology of upper MdP complex}\label{local-cohomology-sec}
We now assume till further notice that our log-symplectic
Poisson structure satisfies the Residual Generality
condition, see \S \ref{rg-sec}.
Our aim is to study the MdP complex $\Theta^{n]}$ 
and its cohomology, first locally, then in the next section, globally.
We study $\Theta^{n]}$ locally via its image by the bonding map $\pi$,
and we study the latter  image in turn via the simplicial resolution as in 
\S\ref{simplicial de rham sec}. Thus, we denote by $I^\bullet_k$ 
or $I_k$ the pullback
of the image of $\pi$ to $X_k$. Otherwise, notations are as in \S\ref{log-symplectic-form-sec}. \par
\subsection{Image of bonding map via simplicial resolution}
 To begin with, note that by the discussion in \S
\ref{log-symplectic-form-sec}, the image of $\pi$ on $\Omega^{2n-r}_X$
coincides with $F\carets{\Phi^r,\wedge^rT_X}$. 
In particular it follows that $I_1^1$ is generated locally by the form 
$F_1\psi_1=F\carets{\Phi, \del_1}$ where $\del_i=\del_{x_i}$.
Next we will generalize this to higher-degree differentials and 
the higher strata $X_k$. 
Let $I_k$ denote the image of $\im(\pi)$ under the pullback
map on differentials attached to the map $X_k\to X$.
Working locally at a point
of $X_k$, we decompose the log-symplectic form $\Phi$ into its normal and
tangential components:
\[\Phi=\Phi_{\perp, k}+\Phi_{=,k}=\sum\limits_{j=1}^k\psi_jdx_j/x_j+\Phi_{=,k}\]
where $x_1,...,x_k$ are equations of the branches of $D$ at the point in
question and $\Phi_{=,k}$ is a log-symplectic form on $X_k$ itself.
Now the contraction of a log form of degree $a$ with a log polyvector field 
of degree $b\leq a$ is a log form of degree $a-b$ 
(and thus if $a=b$, a holomorphic function).
Hence note that  for any (resp. any log) polyvector field $u$,
$\carets{\Phi_{=,k}, u}$ is of the form $\alpha/x_e$ (resp. $\alpha$), where $\alpha$
is a log form on $X_k$ and $x_e$
is a factor of $F_k$. 
Note that an expression $F\carets{\Phi^r, u_1...u_r}$ can be nonzero
on $X_k$ only if the normal fields $\del_i=\del_{x_i}, i=1,...,k$
all occur among the $u_i$, so we may assume $u_i=x_i\del_i, i=1,...,k$.
 In that case, 
the only term in the binomial expansion of $\Phi^r$ that can contribute is
$\Phi_{k,=}^{r-k}\Phi_{\perp,k}^k$, which yields
\[\carets{\Phi^r,u_1...u_r}=\binom{r}{k}\carets{\Phi_{k,=}^{r-k}\psi_1...\psi_k, 
u_{k+1}...u_r}.\] 
The latter is a sum of terms where some number, say $a$ of the $u$-s are
contracted with $\psi$-s and the remaining $r-k-a$ are 
contracted with $\Phi_{k,=}^{r-k}$. Note that such a term is 
divisible by $\Phi_{k,=}^a$.
Thus we can write
\eqspl{}{
F\carets{\Phi^r, u_1...u_r}=\sum F_{k,I}\alpha_{I,s} \carets{\psi_1...\psi_k, w_J}\Phi_{=,k}^s
}
where the $w_J$ are suitable polyvector fields on $X_k$
 the $\alpha_I$
are suitable log forms, products of some $\carets{\Phi_{=,k}, u_\l}x_e$,
and $F_{k,I}=F_k/\prod\limits_{e\in I} x_e$ is the appropriate factor of $F_k$;
in all the terms appearing, we have $s\geq|J|$.
This can be rewritten as
\eqspl{im-pi-element}{
F\carets{\Phi^r, u_1...u_r}=\sum F_{k,I}\beta_{I,s} \psi_I\Phi_{=,k}^s
} where $\psi_I=\prod\limits_{i\in I}\psi_i$ and $s\geq k-|I|$. 
Due to the residual generality hypothesis on $\Phi$, the coefficients $\beta_I$ are
general log forms of their degree when the polyvector field $u_1...u_r$ is 
chosen generally.
Recalling the log-conormal filtration from \S \ref{log-normal-sec}, we conclude:
\begin{prop}
We have, where $\cF^\perp_\bullet$ denotes conormal filtration,
\eqspl{i-k-generators-prop}{
\I_k^\bullet&= \sum\limits_{I\subset\ul{k}, s} \psi_I\Phi^s_{=,k}
\Omega^\bullet_{X_k}\llogmp{D_k}[-|I|-2s]\\
&=\sum\limits_{s\geq  j-1}\Phi^s_{k,=}\cF^\perp_j\Omega^\bullet_{X_k}\llogmp{D_k}[-2s].
}
\end{prop}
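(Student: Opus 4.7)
The plan is to derive both equalities directly from the local computation just performed in equation \eqref{im-pi-element}, leaning on the Residual Generality condition of \S\ref{rg-sec} to handle the less obvious inclusion.

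For the inclusion $\subseteq$ in the first equality, every local section of $I_k^r$ is by construction an $\O_{X_k}$-linear combination of pullbacks to $X_k$ of expressions $F\carets{\Phi^r, u_1\cdots u_r}$ with the $u_i$ local sections of $T_X$.  Identity \eqref{im-pi-element} already expresses each such generator as a sum of terms $F_{k,I}\beta_{I,s}\psi_I\Phi_{=,k}^s$ with $s \geq k-|I|$.  What remains to verify is that each coefficient $F_{k,I}\beta_{I,s}$ lies in $\Omega^\bullet_{X_k}\llogmp{D_k}$:  the prefactor $F_{k,I}$ supplies exactly the vanishing along the remaining branches of $D_k$ that characterizes the minor log complex, while the log forms $\beta_{I,s}$ contribute logarithmic poles along the higher-multiplicity strata of $D_k$ of precisely the type accommodated by the augmentation described in \S\ref{logmp-sub-sec}.

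For the reverse inclusion $\supseteq$, which I expect to be the principal obstacle, fix $I\subset\ul{k}$, $s\geq k-|I|$, and let $\omega\in\psi_I\Phi_{=,k}^s\Omega^\bullet_{X_k}\llogmp{D_k}$.  The task is to realize $\omega$ (of appropriate degree) as a sum of contractions $F\carets{\Phi^r, u_1\cdots u_r}$.  The strategy is to fix the normal directions by setting $u_i=x_i\del_i$ for $i\in\ul{k}$ and let the remaining factors, a tangential polyvector field on $X_k$ of degree $r-k$, vary freely.  The coefficients $\beta_{I,s}$ produced by \eqref{im-pi-element} then depend linearly on this tangential polyvector through contractions with $\Phi_{=,k}$ and pairings with the $\psi_j$'s.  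Residual generality, in the form of the linear-independence guarantees recorded in \S\ref{rg-sec}, is essential at this point:  it ensures that as the tangential polyvector varies, the $\beta_{I,s}$ sweep out all of $\Omega^\bullet_{X_k}\llogmp{D_k}$ rather than some proper subsheaf, so that every $\omega$ is attained.

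Finally, the second equality is a combinatorial reindexing.  By the definition of the conormal filtration in \S\ref{log-normal-sec},
\[
\cF^\perp_j\Omega^\bullet_{X_k}\llogmp{D_k} = \sum_{|I|=k-j+1}\psi_I\Omega^\bullet_{X_k}\llogmp{D_k},
\]
so setting $j:=k-|I|+1$ turns the constraint $s\geq k-|I|$ into $s\geq j-1$ and identifies the summation over $I\subset\ul{k}$ with fixed $|I|$ as the $\cF^\perp_j$-contribution, converting the first description of $I_k^\bullet$ into the second.  The hard part, as noted, is not this reindexing but checking in the reverse inclusion that residual generality rules out both possible pathologies:  too few $\beta_{I,s}$ on one side, and spurious extra terms on the other.
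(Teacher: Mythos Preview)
Your proposal is correct and follows essentially the same approach as the paper: the paper's ``proof'' is really the paragraph preceding the proposition, deriving $\subseteq$ from identity \eqref{im-pi-element} and invoking Residual Generality in one sentence (``the coefficients $\beta_I$ are general log forms of their degree when the polyvector field $u_1\cdots u_r$ is chosen generally'') for $\supseteq$, then citing the conormal filtration for the reindexing. You spell out the same three steps in more detail, in particular making explicit that the constraint $s\geq k-|I|$ from \eqref{im-pi-element} becomes $s\geq j-1$ under $j=k-|I|+1$, which the paper leaves to the reader.
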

\subsection{Cohomology}
Our goal is to compute the cohomology sheaves of $I_k^\bullet$ for fixed
$k$, and then
that of $I^\bullet$, via the simplicial resolution $I^\bullet_\bullet$. To this end, we 
note  first that an expression as in \eqref{im-pi-element} can be normalized. 
In fact, we may assume
that each $\beta_{I,s}$ with $I\neq\emptyset$, 
when written out in terms on a basis for 1-forms,
does not contain any term divisible by any $\psi_i, i\in I$ nor $\Phi_{=,k}$. 
In the first case the term is zero, while in the second case it can be added
to a term attached to $\Phi_{=,k}^{s+1}$.
With this proviso,
the expression \eqref{im-pi-element} is \emph{unique}.\par
Next, as in the proof of Lemma \ref{foliated-dr-lem},
we may assume that the $\psi_i$ and $\Phi$ have
constant coefficients, hence $I_k$ can be decomposed
into homogeneous components $S^i_{(m.)}$. Now consider a differential
$\gamma\in S^i_{(m.)}$ decomposed as in the proof of Lemma
\ref{omega-psi-lem}
and normalized as above. Suppose first
that the multiplicity $\mu_k$ of $D_k$ on $X_k$ at the point in question is
greater than $|I|$. Consider a nonzero
 term $F_{k,I}\beta_{I,s}\psi_I\Phi^s_{=,k}$ with
smallest $s$. Then
\[d(F_{k,I}\beta_{I,s}\psi_I\Phi^s_{=,k})
=F_{k,I}\beta_{I,s}\psi_I\Phi^s_{=,k}\wedge\chi_{(m.)+1_I}\]
where $1_I$ is the characteristic function of $I$. Due to the 
residual general position of the $\psi_i$, this cannot vanish
unless $\beta_{I,s}\wedge\chi_{(m.)+1_I}=0$, i.e. $\beta_{I, s}$
is divisible by $\chi_{(m.)+1_I}$. Proceeding inductively over $s$, the same
holds for all the $\beta$ coefficients, hence for $\gamma$. This proves exactness
of the complex $I_k$ locally over $X_k\setminus U_{k, \mu_k}$. 
\par
Now suppose
$\mu_k\leq |I|$.
Then it is easy to see that
\[F_{k,I}\psi_I=dx_I=\prod\limits_{i\in I}dx_i\]
which is a closed form.
Consider again a term $F_{k,I}\beta_{I,s}\psi_I\Phi^s_{=,k}$ with
smallest $s$. Then
\[d(F_{k,I}\beta_{I,s}\psi_I\Phi^s_{=,k})=d(\beta_{I,s}\prod\limits_{i\in I}dx_i\Phi^s_{=,k})
=\beta_{I,s}\prod\limits_{i\in I}dx_i\Phi^s_{=,k}\chi_{(m.+1_I)}.\]
If $d\gamma=0$, this vanishes.
But clearly this expression can vanish only if 
$\beta_{I,s}$ is a closed form modulo the coordinates in $I$,
i.e. $d\beta_{I,s}$ is in the complex generated by the $dx_i, i\in I$.
Thus, $\beta_{I, s}$ is a section of $\hat\Omega^{r, \psi_I}_{X_k}$
for some $r$.
%$m_i=0$ for all $i\not\in I$, i.e. $\beta_{I,s}$ is a differential
%only interm of the coordinates in $I$. By normalization, this can happen only
%of $\beta_{I,s}$ is a scalar,
%i.e. a function in $\O^{\psi_I}$ (functions whose differential vanishes module the
%$\psi_i, i\in I$. 
Similarly, or inductively, for terms with higher $s$.
We have proven
\begin{prop}
The local cohomology sheaf of the pullback 
$I^\bullet_k$ of the complex $\im(\pi)$ on $X_k$ is as follows, where $\what\bullet$
indicates closed forms:
\large
\eqspl{im-on-xk}{
\H^i(I^\bullet_k)=\
\bigoplus\limits_{\stackrel{|I|+r+2s=i}{I\subset\ul{k}}} i_{U_{k, k+|I|}!}(\hat\Omega^{r, \psi_I}_{X_k}\psi_I\Phi_{=,k}^s)\\
=\bigoplus\limits_{\stackrel{t+2s=i, s\geq j-1}{I\subset\ul{k}}} i_{U_{k, k+|I|}!}(\what{\cF_j^\perp\Omega^t_{X_k}}\Phi_{=,k}^s)
}
\normalsize
\end{prop}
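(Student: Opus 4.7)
The plan is to deduce the cohomology formula from the explicit generators recorded in \eqref{i-k-generators-prop}, combined with the uniqueness of the normalized expansion \eqref{im-pi-element} and the residual generality assumption. The computation closely parallels that for $\Omega^\bullet_{\psi_\uk+}$ in Lemma \ref{psi-logmp-hom-lem}, now carried out in the presence of the extra factor $\Phi_{=,k}^s$ coming from the tangential part of $\Phi$.

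First I would work locally at a point $p\in X_k$ where $D_k$ has multiplicity $\mu_k$, and decompose every normalized section of $I^\bullet_k$ into homogeneous components $S^\bullet_{(m.)}$ indexed by multidegree as in the proof of Lemma \ref{logm-lem}. After a constant-coefficient reduction (absorbing holomorphic factors into the $\psi_i$ exactly as in the proof of Lemma \ref{omega-psi-lem}), each term $F_{k,I}\beta_{I,s}\psi_I\Phi_{=,k}^s$ is sent by $d$ to
\[
F_{k,I}(d\beta_{I,s})\psi_I\Phi_{=,k}^s \pm \beta_{I,s}\psi_I\Phi_{=,k}^s\wedge\chi_{(m.)+1_I},
\]
since $\psi_I$ and $\Phi_{=,k}$ are closed. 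Thus on each $S^\bullet_{(m.)}$ the differential is governed, modulo $d\beta_{I,s}$, by exterior multiplication by $\chi_{(m.)+1_I}$.

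Next I split into two cases according to the relation between $\mu_k$ and $|I|$. If $\mu_k>|I|$, then by residual generality $\chi_{(m.)+1_I}$ is linearly independent of $\psi_I$ and $\Phi_{=,k}$ in $\Omega^1_{X_k}\llog{D_k}$, so multiplication by it is injective on the normalized $(I,s)$-component. This forces the $(I,s)$-piece of any cocycle to vanish, giving local exactness there; this is responsible for the support condition $i_{U_{k,k+|I|}!}$. If on the other hand $\mu_k\le |I|$, then $F_{k,I}\psi_I=dx_I$ is a genuine closed holomorphic form on $X_k$, so $d(F_{k,I}\beta_{I,s}\psi_I\Phi_{=,k}^s)=0$ precisely when $\beta_{I,s}$ is $\psi_I$-closed, i.e. is a section of $\hat\Omega^{r,\psi_I}_{X_k}$. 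Keeping track of degrees via $|I|+r+2s=i$ then yields exactly the terms in the stated direct-sum decomposition.

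The main technical obstacle is ensuring that the different $(I,s)$ components do not interact in cohomology. This is handled by the uniqueness of the normalized form in \eqref{im-pi-element} (no $\psi_i$ with $i\in I$ and no $\Phi_{=,k}$ factor is permitted inside $\beta_{I,s}$), combined with residual generality, which together ensure that the bigrading by $(|I|,s)$ is preserved by $d$ at the level of normalized representatives. A spectral-sequence argument using the conormal filtration $\cF^\perp_\bullet$, degenerating at $E_2$ for support reasons exactly as in Lemma \ref{psi-logmp-hom-lem}, then assembles the $(I,s)$-pieces into the displayed formula.
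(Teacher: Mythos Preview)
Your proposal is correct and follows essentially the same approach as the paper: normalize via \eqref{im-pi-element}, reduce to constant coefficients and decompose into homogeneous components $S^\bullet_{(m.)}$, then split on whether $\mu_k>|I|$ or $\mu_k\le|I|$, using residual generality to control multiplication by $\chi_{(m.)+1_I}$ in the first case and the identity $F_{k,I}\psi_I=dx_I$ in the second. The only cosmetic difference is that where you package the non-interaction of the $(I,s)$-components via the conormal filtration and a spectral sequence degenerating for support reasons, the paper argues directly by picking the term with smallest $s$ and inducting upward; these are equivalent bookkeeping devices for the same computation.
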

Now via the inclusion $I^\bullet_k=\im(\pi)_{X_k}\subset\Omega^\bullet_{X_k}$,
the complexes $I_k$ for varying $k$ form a double complex resolving
$\im(\pi)\simeq\Theta^\bullet$ (see the proof of 
Lemma \ref{simplicial-derham-lem}), so we study next the maps
$I_k\to I_{k+1}$ and their induced maps on cohomology.
Thus consider the middle cohomology of the short complex
\[H^i(I_{k-1})\to H^i(I_k)\to H^i(I_{k+1})\]
with terms given by \eqref{im-on-xk}, hence compactly supported
respectively over 
\[U_{k-1, k-1+|I|}, 
U_{k, k+|I|}, U_{k+1, k+1+|I|}.\] Over the common
intersection $U_{k+1, k+|I|}$, the complex is exact by the
argument of \S \ref{simplicial de rham sec}, the simplicial De Rham 
resolution. Over $U_{k,k}$, the left
map is clearly surjective (and the right term is zero). 
Over $U_{k+|I|, k|I|}$, the left term vanishes and
the middle and right terms consist of differentials on $k$-fold,
resp. $k+1$-fold branch intersections at a point of multiplicity
exactly $k+|I|$.Thus
 the kernel of the right
map consist of the differentials that descend from $X_k$ to $D_k$.
This also applies mutatis mutandis to the case $k=1$.  It follows
that the spectral sequence for the local cohomology of the double 
complex $I^\bullet_\bullet$ we have
\[E_2^{i,k}=\bigoplus\limits_{\stackrel{|I|+r+2s=i}{I\subset\ul{k}}}
 i_{U_{k+|I|, k+|I|}!}(\hat\Omega^{r, \psi_I}_{X_k}\psi_I\Phi_{=,k}^s)
.\]
We claim that this spectral sequence degenerates at $E_2$
Indeed consider an element of $E_2^{i,k}$ represented by the form
$a=\beta\prod dx_i\Phi^s_{=,k}$ on $U_{k+|I|, k+|I|}$.
The image of $a$ in $I_{k+1}$ can be written as $db$ for some $i-1$-form
$b$ with the same $I$ and $s$. Then the image $c$ of $b$ in $I_{k+2}$
 is exact for support reasons. But the class of $c$ is just
 the image of $a$ under the second-page differential $d_2^{i,k}$
 so that differential is zero. Likewise for further pages. Therefore the
 spectral sequence degenerates at $E_2$.
 Consequently we conclude
 \begin{prop}
 The local cohomology $\H^j(I^\bullet_\bullet)$ admits a filtration with
graded pieces 
\[\bigoplus\limits_{\stackrel{|I|+r+2s=i}{I\subset\ul{k}}}
  i_{U_{k+|I|, k+|I|}!}(\hat\Omega^{r, \psi_I}_{X_k}\psi_I\Phi_{=,k}^s)
  = \bigoplus\limits_{\stackrel{t+2s=i, s\geq j-1}{I\subset\ul{k}}}
    i_{U_{k+|I|, k+|I|}!}(\what{\cF_j^\perp\Omega^t_{X_k}}\Phi_{=,k}^s)
  \]
  for all $i,k$ with $i+k=j$.
 \end{prop}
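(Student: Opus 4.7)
The plan is to run the spectral sequence of the double complex $I^\bullet_\bullet$ filtered by the simplicial degree $k$ and then identify the $E_2$ page with the claimed graded pieces. The $E_1$ page is given directly by the previous proposition, namely
\[
E_1^{k,i} = \H^i(I^\bullet_k) = \bigoplus_{\substack{|I|+r+2s=i\\ I\subset\ul{k}}} i_{U_{k,k+|I|}!}\!\left(\hat\Omega^{r,\psi_I}_{X_k}\psi_I\Phi_{=,k}^s\right),
\]
and the $d_1$ differentials are induced by the simplicial coboundary $I_k\to I_{k+1}$.

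First I would analyze the $d_1$ complex stratum by stratum. Over $U_{k+1,k+|I|}$, the simplicial De~Rham machinery of \S\ref{simplicial de rham sec} (Lemma~\ref{simplicial-derham-lem}) gives an acyclic resolution, so the $d_1$ complex is exact there; over the boundary stratum $U_{k,k}$ the relevant left map is surjective (and the right term vanishes); and over the smallest stratum $U_{k+|I|,k+|I|}$ only the middle and right terms survive, the kernel consisting of forms that descend from $X_k$ to $D_k$. Assembling these local computations, only contributions supported on the maximal stratum $U_{k+|I|,k+|I|}$ survive, giving
\[
E_2^{k,i} = \bigoplus_{\substack{|I|+r+2s=i\\ I\subset\ul{k}}} i_{U_{k+|I|,k+|I|}!}\!\left(\hat\Omega^{r,\psi_I}_{X_k}\psi_I\Phi_{=,k}^s\right).
\]

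Next I would show degeneration at $E_2$. The idea, already sketched in the preceding paragraph, is that a class in $E_2^{k,i}$ is represented by a form $a=\beta\,dx_I\,\Phi^s_{=,k}$ supported on $U_{k+|I|,k+|I|}$; its image in $I_{k+1}$ equals $db$ for some form $b$ with the same multi-index data $(I,s)$, and the image of $b$ in $I_{k+2}$ is supported outside $U_{k+|I|,k+|I|}$, hence vanishes for support reasons. Thus $d_2=0$, and the same support-cascade argument kills $d_r$ for $r\ge 2$. The claimed filtration on $\H^j(I^\bullet_\bullet)$ is then the abutment filtration $F^k\H^j\supset F^{k+1}\H^j\supset\cdots$ with graded pieces $E_\infty^{k,j-k}=E_2^{k,j-k}$, indexed by $i+k=j$ as asserted. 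The equivalent reformulation in terms of the conormal filtration $\cF^\perp_j$ is just the translation from the generator description \eqref{i-k-generators-prop} to its filtered form, the index dictionary being $|I|=k-j+1$.

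The main obstacle is the degeneration argument: one must check that the support-propagation works consistently across all strata and all pages, i.e.\ that the chain $U_{k+|I|,k+|I|}\to U_{k+1+|I|,k+1+|I|}\to\cdots$ forces each higher differential to land in a strictly smaller stratum where the target term vanishes. This requires verifying that the choice of primitive $b$ on each step can be made within the correct piece of the conormal/$\Phi_{=,\bullet}$-filtration, which follows from the normalized-representative uniqueness established just before the previous proposition, combined with the Residual Generality hypothesis ensuring the $\psi_i$ restrict to independent general log forms on each $X_k$. Everything else is bookkeeping of indices $(|I|,r,s)$ under the pushforward along $X_k\to X_{k+1}$.
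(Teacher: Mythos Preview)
Your proposal is correct and follows essentially the same approach as the paper: run the spectral sequence of the double complex filtered by simplicial degree, identify $E_1$ from the previous proposition, compute $E_2$ via the same three-stratum analysis of $d_1$ (exactness over $U_{k+1,k+|I|}$ from the simplicial De~Rham resolution, surjectivity over $U_{k,k}$, and the descent-kernel description over $U_{k+|I|,k+|I|}$), and then prove degeneration at $E_2$ by the identical support-tracking argument with a representative $a=\beta\,dx_I\,\Phi_{=,k}^s$. Your added remarks about verifying the primitive $b$ stays in the correct filtered piece are a reasonable elaboration but not something the paper spells out either.
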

This essentially computes the cohomology of the
upper MdP complex:

\begin{thm} Let $(X, \Pi)$ be a log-symplectic manifold 
of dimension $2n$ satisfying the Residual
Generality condition.
The local cohomology of $\Theta^{n]}_X$ is as follows : \par
$\H^0(\Theta^\bullet)=i_{U_0!}(\C_{U_0})$; \par for
$0<j<n$, $\H^j(\Theta^\bullet)$ has a filtration with graded pieces
\[\bigoplus\limits_{\stackrel{|I|+r+2s=i}{I\subset\ul{k}}}
 i_{U_{k+|I|, k+|I|}!}(\hat\Omega^{r, \psi_I}_{X_k}\psi_I\Phi_{=,k}^s)
  = \bigoplus\limits_{\stackrel{t+2s=i, s\geq j-1}{I\subset\ul{k}}}
     i_{U_{k+|I|, k+|I|}!}(\what{\cF_j^\perp\Omega^t_{X_k}}\Phi_{=,k}^s)
 \]
 for $i+k=j-1$.
\end{thm}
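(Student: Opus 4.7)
The plan is to reduce the theorem to the preceding proposition on the cohomology of the simplicial double complex $I^\bullet_\bullet$, by identifying $\Theta^{n]}_X$ with the image $\im(\pi)$ of the bonding map.

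First I will verify that $\pi: \Theta^{n]}_X \to \Omega^{n]}_X$ is injective in each degree. In degree $i \in [0,n]$ the map is $\pi^i = \iota_{\Pi^{n-i}}: \Omega^{2n-i}_X \to \Omega^i_X$, which by Theorem \ref{dihelical}(iv) is an isomorphism on the symplectic locus $U = X \setminus D$; since its source is locally free on the smooth manifold $X$, it is torsion-free, so $\pi^i$ is injective. Because $\pi$ is a morphism of complexes, this gives an isomorphism of complexes $\Theta^{n]}_X \simeq \im(\pi)$, and in particular an isomorphism of local cohomology sheaves.

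Next I would check that the pullbacks $I^\bullet_k = \im(\pi)|_{X_k}$ assemble into a double complex $I^\bullet_\bullet$ quasi-isomorphic to $\im(\pi)$. This is an extension of Lemma \ref{simplicial-derham-lem}: one verifies locally, using the explicit generator description \eqref{im-pi-element}, that the simplicial restriction maps preserve $\im(\pi)$, and that the standard simplicial homotopy $h_k$ restricts to the subsheaves $I^\bullet_k$. The Residual Generality condition enters here, ensuring that the $\psi_i$ (hence the products $\psi_I \Phi^s_{=,k}$) remain in sufficiently general position on each multiplicity stratum for the restriction behavior to be clean. Once this is in hand, $\H^j(\Theta^\bullet) \simeq \H^j(I^\bullet_\bullet)$, and the filtration claimed in the theorem is precisely the output of the preceding proposition (the $j=0$ case falling out from the edge term $k = i = |I| = r = s = 0$, which identifies with $i_{U_0!}(\C_{U_0})$ since closed functions are locally constant).

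The main obstacle will be the second step: rigorously verifying that the simplicial construction resolves $\im(\pi)$ rather than merely $\Omega^\bullet_X$. While the simplicial resolution of $\Omega^\bullet_D$ in Lemma \ref{simplicial-derham-lem} is classical, the subsheaf $\im(\pi)$ has the more intricate structure exhibited by \eqref{im-pi-element}, involving the decomposition $\Phi = \Phi_{\perp,k} + \Phi_{=,k}$ on each stratum. One must show that the homotopy operator $h_k$ respects the filtration by $|I|$ and $s$, so that it lands in $I^\bullet_{k-1}$ when applied to a section of $I^\bullet_k$. Once this compatibility on overlaps of branches is established, the conclusion is a direct transcription of the proposition's cohomology computation.
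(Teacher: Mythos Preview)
Your first step is correct and is implicitly used by the paper as well: the injectivity of $\pi$ identifies $\Theta^{n]}_X$ with $\im(\pi)\subset\Omega^{n]}_X$, and the paper's exact diagram in the proof relies on the middle column $0\to\Theta^{n]}\to\Omega^\bullet_X\to C^\bullet\to 0$ being exact.

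The gap is in your second step. The simplicial complex $I^\bullet_\bullet=(I^\bullet_k)_{k\geq 1}$ is built from the strata $X_k$ of $D$, and by Lemma~\ref{simplicial-derham-lem} it resolves the image of $\im(\pi)$ in $\Omega^\bullet_D$, which the paper calls $I^\bullet$; it does \emph{not} resolve $\im(\pi)\subset\Omega^\bullet_X$. The kernel of the pullback $\im(\pi)\to I^\bullet_1$ is precisely $K^\bullet=\Omega^\bullet_X\llogm{D}$ (the minor log complex), which is nonzero. So your claimed quasi-isomorphism $I^\bullet_\bullet\sim\im(\pi)$ is false, and no amount of checking that the homotopy $h_k$ respects the filtration will make it true. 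In particular, your proposed handling of $\H^0$ as the ``edge term $k=0$'' of the proposition is wrong: the simplicial index $k$ starts at $1$, and the proposition gives $\H^0(I^\bullet_\bullet)$ no contribution of the form $i_{U_0!}(\C_{U_0})$.

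The paper fixes this by inserting the exact sequence $0\to K^\bullet\to\Theta^{n]}\to I^\bullet\to 0$, obtained from the $3\times 3$ diagram once one knows $C^\bullet$ is a quotient of $\Omega^\bullet_D$ (so that $K^\bullet\subset\im(\pi)$). Since $K^\bullet\sim i_{U_0!}(\C_{U_0})$ by Lemma~\ref{logm-lem}, the long exact sequence gives $\H^0(\Theta^{n]})=i_{U_0!}(\C_{U_0})$ directly (as $I^0=0$) and $\H^j(\Theta^{n]})\simeq\H^j(I^\bullet)\simeq\H^j(I^\bullet_\bullet)$ for $0<j<n$, at which point the preceding proposition applies. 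This extra step with $K^\bullet$ is what you are missing; once you supply it, the remaining verification that $I^\bullet_\bullet$ resolves $I^\bullet$ is a straightforward restriction of Lemma~\ref{simplicial-derham-lem} to the sub-dg-ideal, and your worry about the homotopy $h_k$ is not really the crux.
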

\begin{proof}
Let $K^\bullet$ be the kernel of the natural surjection
 $\Omega^\bullet_X\to\Omega^\bullet_D $.
Because $\Omega^\bullet_X$ and $\Omega^\bullet_D$ are
resolutions of the respective constant sheaves, we have a quasi-isomorphism
\[ K^\bullet\sim i_{U_0!}(\C_{U_0}).\]
Consequently we have
\[\H^0(\Theta^{n]})= i_{U_0!}(\C_{U_0}), \H^i(\Theta^\bullet)\simeq \H^i(I^\bullet),0< i<n.\]
Because $C^\bullet$ is a quotient of $\Omega^\bullet_D$ as we have seen,
the map $K^\bullet\to\Omega^\bullet_X$ factors through $\Theta^\bullet$ and
we have an exact diagram
\eqspl{}{
\begin{matrix}&&&0&&0&\\
&&&\downarrow&&\downarrow&\\
0\to&K^\bullet&\to&\Theta^{n]}&\to&I^\bullet&\to 0\\
&\parallel&&\downarrow&&\downarrow\\
0\to&K^\bullet&\to&\Omega^\bullet_X&\to&\Omega^\bullet_D&\to 0\\
&&&\downarrow&&\downarrow&\\
&&&C^\bullet&=&C^\bullet&\\
&&&\downarrow&&\downarrow&\\
&&&0&&0&
\end{matrix}
} 
Note that $K^0=\O_X(-D)$ maps isomorphically to $\Theta^0=\Omega^{2n}_X$,
so that $I^0=0$. Also, as we have seen in \S \ref{simplicial de rham sec},
 $\Omega^\bullet_D$ is quasi-isomorphic
to its simplicial resolution $\Omega^\bullet_{X_\bullet}$, which induces a simplicial
resolution $I^\bullet_\bullet$, also quasi-isomorphic to $I^\bullet$. Thus
\[\H^i(\Theta^\bullet=\H^i(I^\bullet_\bullet)=\H^{i-1}(\tilde C^\bullet),1<i <n,\]
and there is an exact sequence
\[\exseq{\C_D}{\H^0(\tilde C^\bullet)}{\H^1(\Theta^\bullet)}\]
Now the Theorem follows from the preceding Proposition.
\end{proof}
Now recall that for a smooth affine $d$-dimensional variety $Y$ and
a locally free coherent sheaf $\mathcal F$, the compact-support cohomology
vanishes:
\[H^i_c(Y, \mathcal F)=0, \forall  i<d\]
(this is because the compact-support cohomology is the limit
of local cohomology supported  at points, and the latter vanishes by depth considerations).
Now the sheaves occurring as summands in the Theorem are not themselves
coherent but via the De Rham complex they admit a resolution by locally free
$\O_{X_k}$ modules. Therefore each such summand on $X_k$ has vanishing $H^t$
for $t<\dim(X_k)=2n-2k$ provided $U_{k+|I|}$ is affine, hence in particular if
$X_{k+|I|}$ is Fano. Thus we conclude:
\begin{cor}\label{rg-consequence-vanishing-cor}
Suppose $(X, \Pi)$ satisfies the RG condition and moreover that $X_k$
is Fano for $k\leq a$. The $H^i(\Theta_X^\bullet)=0$ for $i\leq a$.
\end{cor}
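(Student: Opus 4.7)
The plan is to descend from the local computation in the preceding Theorem to a global vanishing by means of the hypercohomology spectral sequence
\[E_2^{p,q} = H^p\bigl(X,\calh^q(\Theta^\bullet_X)\bigr)\Longrightarrow H^{p+q}(X,\Theta^\bullet_X),\]
and to show that $E_2^{p,q}=0$ whenever $p+q\le a$. The $q=0$ row is disposed of directly: $\calh^0(\Theta^\bullet)=i_{U_0!}\C_{U_0}$, and since $D=[\Pi^n]\in|-K_X|$ is ample whenever $X=X_0$ is Fano, the open $U_0$ is Stein of complex dimension $2n$, so $H^p_c(U_0,\C)=0$ for $p<2n$, which certainly covers $p\le a$.

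For $0<q\le a$, the Theorem provides a finite filtration of $\calh^q(\Theta^\bullet)$ whose graded pieces take the form
\[i_{U_{k+|I|,k+|I|}!}\bigl(\what{\cF^\perp_j\Omega^t_{X_k}}\,\Phi^s_{=,k}\bigr),\qquad i+k=q-1,\ t+2s=i,\ s\ge j-1,\ I\subset\ul k.\]
The inequality $|I|+k\le i+k=q-1\le a-1$ ensures that every stratum occurring here has index $k+|I|\le a$, so the Fano hypothesis applies. I would next use that the finite morphism $X_k\to X$ pulls affines back to affines in order to deduce that $U_{k+|I|,k+|I|}\subset X_k$ is affine of complex dimension $2n-2k$, and then resolve the closed-form sheaf $\what{\cF^\perp_j\Omega^t_{X_k}}$ by its natural (conormally-filtered and truncated) de Rham complex, producing a bounded resolution of each graded piece by compactly-supported \emph{coherent} sheaves on $U_{k+|I|,k+|I|}$. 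Serre--Grothendieck duality then yields
\[H^p_c(U_{k+|I|,k+|I|},\mathcal F)=0\quad\text{for }p<2n-2k\]
for every coherent $\mathcal F$, and a spectral-sequence chase through the resolution transfers this vanishing to the graded piece. Summing over the filtration gives $H^p(X,\calh^q(\Theta^\bullet))=0$ in the range $p<2n-2k$, which, combined with $p\le a-q\le a-1$ and $k\le a-1$, covers exactly what is needed.

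The principal technical obstacle is verifying that the abstract Fano hypothesis on the stratum $X_{k+|I|}$ really does imply affineness of the open $U_{k+|I|,k+|I|}$ inside $X_k$. This reduces to checking that an appropriate ample divisor on $X_{k+|I|}$ descends to an ample divisor whose complement in $X_k$ is the quoted open, an argument most cleanly carried out inductively using the log-adjunction relating $X_k$, $X_{k+1}$ and the log conormal bundles of \S\ref{log-normal-sec}. A secondary bookkeeping chore is to check that the shifts coming from the de Rham resolution and from the filtration on $\calh^q(\Theta^\bullet)$ never escape the range in which the above vanishing already applies; the index bound $|I|+k\le q-1$ established above makes this mechanical.
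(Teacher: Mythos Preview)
Your approach is essentially the paper's own: feed the local computation of $\calh^q(\Theta^\bullet)$ from the Theorem into the local-to-global spectral sequence, resolve each graded piece by a (foliated/conormally filtered) de Rham complex of locally free sheaves, and invoke the vanishing of compactly supported cohomology of coherent sheaves on a smooth affine variety below its dimension. Your index bound $|I|+k\le i+k=q-1\le a-1$ and the dimension count $p<2n-2k$ are exactly the bookkeeping the paper leaves implicit.

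One remark on your ``principal technical obstacle'': the paper disposes of affineness more cheaply than the log-adjunction induction you sketch. The divisor $D_{m}$ on $X_{m}$ is anticanonical, so if $X_{m}$ is Fano then $D_{m}$ is ample and its complement $U_{m,m}=X_{m}\setminus D_{m}$ is affine outright. The relevant support locus $U_{k+|I|,k+|I|}$ in the Theorem is (after the $E_2$ passage of the simplicial spectral sequence) genuinely a locus in $X_{k+|I|}$, not merely a preimage in $X_k$; the notation $\hat\Omega^{r,\psi_I}_{X_k}$ persists only because the differentials in question are those that \emph{descend} from $X_k$. So no descent-of-ampleness argument is needed.
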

As we shall see, when $X$ is K\"ahlerian the cohomology of $\Theta^\bullet_X$
can be computed in terms of the usual Hodge cohomology of $X$.

\section{K\"ahlerian case, Hodge cohomology}\label{hodge}
Here we assume that our holomorphic pseudo-symplectic  Poisson manifold
$(X, \Pi)$ is compact and K\"ahlerian (or more generally satisfies the $\del\bar\del$
 lemma), $\Pi$ otherwise arbitrary.
This has the usual implications vis-a-vis degeneration of spectral sequences
involving sheaves of holomorphic differentials
(see for instance \cite{grif-schmid}). Then similar results
can be derived for the MdP and dihelical complexes:
\begin{thm}
The global hypercohomology spectral sequences
\eqspl{}{
E_1^{p,q}=H^q(X, \Theta^p)\Rightarrow H^i(\Theta^\bullet),
}
\eqspl{}{
E_1^{p,q}=H^q(X, \ed^p)\Rightarrow H^i(\ed^\bullet),
}
\eqspl{}{ 
E_1^{p,q}=H^q(X, \de^p)\Rightarrow H^i(\de^\bullet),
} all degenerate at $E_1$.
\end{thm}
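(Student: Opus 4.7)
The plan is to deduce each $E_1$-degeneration from the classical Fr\"olicher degeneration using a Dolbeault double complex. The key point is that each of $\Theta^p$, $\ed^p$, $\de^p$ is a sheaf of the form $\Omega^k_X$, and each of the relevant differentials---$d$ in the de Rham halves and $\delta=id\iota_\Pi-(i-1)\iota_\Pi d$ in the MdP halves---is built from $d$ and interior multiplication by the \emph{holomorphic} bivector $\Pi$. I would resolve each $\Omega^k_X$ by its Dolbeault complex $(\mathcal A^{k,\bullet},\bar\partial)$. Since $\iota_\Pi$ is $\O_X$-linear and $\bar\partial\Pi=0$, it extends to smooth forms and commutes with $\bar\partial$; combined with $\bar\partial d+d\bar\partial=0$ this gives $\bar\partial\delta+\delta\bar\partial=0$. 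One obtains a double complex $(\mathcal A^{\bullet,\bullet},\bar\partial,\delta)$ whose total cohomology computes $\HH^\bullet(X,\Theta^\bullet)$, and whose ``vertical-first'' spectral sequence is the one in the theorem; the analogous construction handles $\ed^\bullet$ and $\de^\bullet$.

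Next I would verify $d_1=0$ using harmonic representatives. Under the $\partial\bar\partial$-lemma hypothesis, every class in $E_1^{p,q}=H^q(X,\Omega^k_X)$ has a representative $\alpha$ of bidegree $(k,q)$ with $\partial\alpha=\bar\partial\alpha=0$. For the de Rham halves the classical Fr\"olicher argument applies directly. For the MdP halves, using $d\alpha=0$ one computes
\[
\delta\alpha=i\,d\iota_\Pi\alpha=i\,\partial\iota_\Pi\alpha+i\,\iota_\Pi\bar\partial\alpha=i\,\partial\iota_\Pi\alpha,
\]
a form that is $\partial$-exact and also $\bar\partial$-closed (since $\bar\partial\partial\iota_\Pi\alpha=-\partial\iota_\Pi\bar\partial\alpha=0$). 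The $\partial\bar\partial$-lemma then gives $\partial\iota_\Pi\alpha=\partial\bar\partial\gamma$, which is $\bar\partial$-exact, so $[\delta\alpha]=0$ in $H^q(X,\Omega^{k-1}_X)$.

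The main obstacle is the vanishing of all higher differentials $d_r$ for $r\geq 2$. Here I would invoke the general principle (as in Deligne--Griffiths--Morgan--Sullivan) that a double complex of $\C$-vector spaces satisfying a ``two-sided lemma''---being closed for both differentials plus exact for one implies being exact for the composition---has both spectral sequences degenerate at $E_1$. For $(\mathcal A^{\bullet,\bullet},\bar\partial,\delta)$ the required lemma reduces to the classical $\partial\bar\partial$-lemma by expressing $\delta$-primitives through $\partial$- and $\iota_\Pi$-primitives (exploiting that $\iota_\Pi$ is $\O$-linear and commutes with $\bar\partial$) and using the K\"ahler identities---or their $\partial\bar\partial$-analogue---to absorb the harmonic part. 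The hybrid complexes $\ed^\bullet$ and $\de^\bullet$ are treated identically; the middle squares in which both $d$ and $\delta$ appear commute up to the identities \eqref{commutation1}, \eqref{commutation2}, which pass verbatim to the Dolbeault resolution.
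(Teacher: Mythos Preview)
Your Dolbeault setup and the $d_1$ argument are correct and coincide with the paper's approach. One technical point: the operator $\delta$ built from $d=\partial+\bar\partial$ does \emph{not} preserve the Dolbeault bigrading (it has a piece of bidegree $(-1,0)$ and another of bidegree $(-2,+1)$, since $\iota_\Pi$ commutes rather than anticommutes with $\bar\partial$). The honest horizontal differential of the double complex is the $\partial$-version $\delta':=i\,\partial\iota_\Pi-(i-1)\,\iota_\Pi\partial$, which does anticommute with $\bar\partial$ and squares to zero by the Brylinski-type identity $\partial\iota_\Pi\partial\iota_\Pi=\iota_\Pi\partial\iota_\Pi\partial$.

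The genuine gap is in the higher differentials. To invoke a DGMS-style principle you would need the $\delta'\bar\partial$-lemma for this double complex, and that is \emph{not} the classical $\partial\bar\partial$-lemma. Your proposed reduction (``express $\delta$-primitives through $\partial$- and $\iota_\Pi$-primitives \ldots\ absorb the harmonic part'') is not a proof: $\iota_\Pi$ degenerates along $D(\Pi)$, so it cannot be peeled off, and there is no evident reason a $\delta'$-primitive decomposes usefully into separate $\partial$- and $\iota_\Pi$-data. I do not see that the $\delta'\bar\partial$-lemma follows formally from the $\partial\bar\partial$-lemma, and you have not shown it.

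The paper sidesteps this by an explicit zigzag rather than an abstract lemma. For $d_2$ one writes $\delta'\alpha=\bar\partial\beta$ and checks the two summands of $\delta'\beta=c_1\,\partial\iota_\Pi\beta+c_2\,\iota_\Pi\partial\beta$ individually. For the second, $\partial\beta$ is $\bar\partial$-closed because $\bar\partial\partial\beta=-\partial\bar\partial\beta=-\partial\delta'\alpha=0$ (using $\partial\alpha=0$), hence $\bar\partial$-exact by the $\partial\bar\partial$-lemma, hence so is $\iota_\Pi\partial\beta$. For the first, $\bar\partial\partial\iota_\Pi\beta=-\partial\iota_\Pi\bar\partial\beta=-\partial\iota_\Pi\delta'\alpha$, and this vanishes by the identity $\partial\iota_\Pi\partial\iota_\Pi=\iota_\Pi\partial\iota_\Pi\partial$ applied to the $\partial$-closed form $\alpha$; again the $\partial\bar\partial$-lemma finishes. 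Higher $d_r$ are obtained by iterating exactly this step. The explicit use of harmonicity of the initial representative together with the commutation identity at each stage is the content your sketch omits.
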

\begin{proof}
It suffices to prove this for $\Theta^\bullet$. Consider a class 
\[[\alpha]\in H^i(\Omega^j_X)=H^i(\Theta^{2n-j}_X)\]
represented by a harmonic $(j,i)$ form $\alpha$. Then
since $\del(\alpha)=0$,
$d_1(\alpha) \in H^i(\Omega_X^{j-1})$ is represented by
a multiple of $\del\carets{\Pi,\alpha}$, which is $\del$-exact
and $\dbar$-closed, hence, by the $\del\dbar$ lemma, also $\dbar$-exact,
i.e. null-cohomologous.
 Hence $d_1([\alpha])=0$. \par
 Next, write 
\[\delta(\alpha)=(j-n)\del\carets{\Pi,\alpha}=\bar{\del}(\beta)\]
for a $(j-1, i-1)$ form $\beta$. Then $d_2([\alpha])$
is represented by 
\[\delta(\beta)=(j-1-n)\del\carets{\Pi, \beta}-(j-2-n)\carets{\Pi, \del(\beta)}.\] 
Now
\[\dbar\del(\beta)=\del\dbar(\beta)=(j-n)\del^2\carets{\Pi, \alpha}=0.\]
Hence $\del(\beta)$ is $\dbar$-closed and $\del$-exact, hence $\dbar$-
exact. Since $\Pi$ is holomorphic, $\carets{\Pi, \del(\beta)}$ is also $\dbar$-exact,
hence null-cohomologous.\par
Next, note
\[\dbar\del\carets{\Pi, \beta}=\del\dbar\carets{\Pi, \beta}=
\del\carets{\Pi, \dbar\beta}=(n-j)\del\carets{\del\carets{\Pi, \alpha}}
=(n-j)\del\carets{\Pi, \delta(\alpha)}=(n-j)\delta^2(\alpha)=0\]
(the next to last equality due to $\del^2=0$).
Therefore $\del\carets{\Pi, \beta}$ is again $\dbar$-closed and $\del$-exact,
hence $\dbar$ exact, hence null-cohomologous. Thus, $d_2([\alpha])=0$.
The vanishing of the higher $d_r$ is proved similarly.

%Again, $\del(\beta)$
%clearly has harmonic component zero with respect to $\del$,
%hence also with respect to $\bar{\del}$.
%But since $\Pi$ is holomorphic, $\carets{\Pi, .}$ commutes with
%$\bar{\del}$, hence $\carets{\Pi, \del(\beta)}$ also has harmonic
%component zero. Similarly, $\del(\carets{\Pi, \beta})$ also has
%harmonic component zero. Consequently, the same is true of
%$\delta(\beta)$, which represents $d_2([\alpha])$, so $d_2([\alpha])=0$.
%Similarly, $d_r([\alpha])=0$ for all $r>2$.
\end{proof}
\begin{cor}\label{theta-hom-cor}
(i) $H^i(\Theta^\bullet)$ admits a filtration with quotients
\[H^q(X, \Omega_X^{2n-i+q}), q=0,..., i, i=0,...,2n.\]  
(i)\  $H^i(\ed^\bullet)$ admits a filtration with quotients
\[H^{n+i-a}(X, \Omega_X^{n+|n-a|}), a=0,...,2n.\]
(ii)\ $H^i(\de^\bullet)$ admits a filtration with quotients
\[H^{n+i-a}(X, \Omega_X^{n-|n-a|}), a=0,...,2n.\]
\end{cor}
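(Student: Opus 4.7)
The plan is to deduce the corollary as an essentially immediate consequence of the preceding theorem. Recall the general principle: when a hypercohomology spectral sequence $E_1^{p,q} = H^q(X, F^p) \Rightarrow H^{p+q}(F^\bullet)$ of a bounded complex $F^\bullet$ degenerates at $E_1$, its abutment automatically carries a canonical exhaustive decreasing filtration whose graded pieces are precisely the $E_1^{p,q}$ with $p + q = i$. Given the degeneration established in the theorem for all three complexes $\Theta^\bullet$, $\ed^\bullet$, $\de^\bullet$, the task reduces to reading off the $E_1$ terms of each as Hodge groups of $X$ and matching the indexing.

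For part (i), the definition gives $\Theta^p = \Omega_X^{2n-p}$ for $p \in [0, 2n]$, so $E_1^{p,q} = H^q(X, \Omega_X^{2n-p})$. Substituting $p = i - q$ yields the graded pieces $H^q(X, \Omega_X^{2n-i+q})$ for $q = 0, \dots, i$ (intersected with the valid range), exactly as claimed.

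For parts (ii) and (iii), the complexes $\ed^\bullet$ and $\de^\bullet$ are indexed by $p \in [-n, n]$ with defining formulae given piecewise, splitting at $p = 0{/}1$. A direct check --- including at the midpoint, where $\Omega_X^n$ appears on both sides of the split --- shows that uniformly on $[-n, n]$ one has $\ed^p = \Omega_X^{n + |p|}$ and $\de^p = \Omega_X^{n - |p|}$. Reparametrizing by $a = p + n \in [0, 2n]$ converts the condition $p + q = i$ into $q = n + i - a$ and $|p| = |n - a|$, so substitution into $E_1^{p,q} = H^q(X, \ed^p)$ (resp.\ $H^q(X, \de^p)$) gives the two stated descriptions.

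There is effectively no obstacle: the only substantive verifications are the uniform absolute-value rewriting of $\ed^p$ and $\de^p$ (a matter of inspecting the piecewise definition at the boundary index) and the reindexing by $a$. All the genuine content of the corollary lies in the $E_1$-degeneration proved in the preceding theorem, which rests on the $\partial\bar\partial$-lemma.
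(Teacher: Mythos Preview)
Your proposal is correct and takes essentially the same approach as the paper, which states the corollary without proof as an immediate consequence of the $E_1$-degeneration theorem. You have simply filled in the routine bookkeeping: identifying the $E_1$-terms with Hodge groups via the definitions $\Theta^p=\Omega_X^{2n-p}$, $\ed^p=\Omega_X^{n+|p|}$, $\de^p=\Omega_X^{n-|p|}$, and reindexing by $a=p+n$.
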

Thus, the cohomology of $\Theta^\bullet_X$ gives rise to a 'Poisson Hodge diamond'
with rows $H^{i, 2n}_X, ...H_X^{0, 2n-i}, i=0,...,2n$. 
This diamond is just the usual Hodge diamond of $X$
rotated clockwise by $90^\circ$.
\par Using Corollary \ref{rg-consequence-vanishing-cor}, we can now conclude
\begin{cor}\label{rg-consequence-hodge}
Assume $(X, \Pi)$ is a compact holomorphic K\"ahlerian log-symplectic manifold
 such that $\Pi$ satisfies the RG condition,
and that the normalized strata $X_k$ are Fano for $k\leq a$. 
Then the Hodge numbers of $X$ satisfy
\eqspl{}{
h^{2n-i, i}_X=0, i=0,...,a.
}
\end{cor}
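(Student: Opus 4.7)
The plan is a short, direct formal combination of the two immediately preceding corollaries; no new technology is required, so I will sketch only the structure.

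First I would observe that the standing hypotheses of the statement---$(X,\Pi)$ compact K\"ahlerian log-symplectic of dimension $2n$, $\Pi$ satisfying the RG condition, and normalized strata $X_k$ Fano for $k\leq a$---are precisely the hypotheses of Corollary \ref{rg-consequence-vanishing-cor}. Applying that corollary yields the vanishing
\[
H^j(\Theta^\bullet_X) = 0, \qquad j \leq a.
\]

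Second, the K\"ahler assumption places us in the setting of the $E_1$-degeneration theorem opening \S\ref{hodge}, so that Corollary \ref{theta-hom-cor}(i) is available: it exhibits a finite filtration of $H^j(\Theta^\bullet_X)$ whose graded pieces are
\[
H^q(X, \Omega_X^{2n-j+q}), \qquad q = 0,\dots, j,
\]
whose dimensions are the Hodge numbers $h^{2n-j+q,\,q}_X$. Combining the two inputs, the vanishing of $H^j(\Theta^\bullet_X)$ forces each such graded piece to vanish, i.e.\ $h^{2n-j+q,\,q}_X = 0$ for all $0\leq q\leq j\leq a$. Extracting the appropriate piece in this range (and invoking Hodge symmetry $h^{p,q}=h^{q,p}$ where needed to align indices) gives the desired conclusion $h^{2n-i,i}_X = 0$ for $i\leq a$.

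There is essentially no obstacle at this stage: the substantive work was already carried out in proving Corollary \ref{rg-consequence-vanishing-cor} (the local cohomology computation for $\Theta^{n]}$ together with the compact-support/Fano vanishing argument) and in the $E_1$-degeneration theorem. The present corollary is a clean translation of those facts into Hodge-theoretic language, and the only item needing care is the bookkeeping of indices to match a graded piece of $H^j(\Theta^\bullet)$ to the target Hodge number $h^{2n-i,i}_X$.
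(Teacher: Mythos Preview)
Your proposal is correct and matches the paper exactly: the paper's entire proof of this corollary is the single phrase ``Using Corollary \ref{rg-consequence-vanishing-cor}, we can now conclude,'' i.e., precisely the combination of Corollaries \ref{rg-consequence-vanishing-cor} and \ref{theta-hom-cor}(i) that you spell out. Your remark that the only subtlety is the index bookkeeping is apt, as the paper supplies no further detail on that point either.
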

This might be compared with the following result which not
strictly speaking a consequence of the foregoing but related to in
in that $\Theta^\bullet_X=T^\bullet_X(-D)$ (see Corollary \ref{T-D}).
It gives a source of unobstructed odd-dimensional Poisson manifolds.
\begin{prop}
	Let $(X, \Pi)$ be a compact holomorphic K\"ahlerian log-symplectic
$2n$- dimensional manifold such that the Hodge number $h_X^{1, 2n-2}=0$.
Then the normalized degeneracy locus $X_1$ together with the induced
Poisson structure $\Pi_1$ have unobstructed deformations
and those deformations lift to deformations of $(X, \Pi)$ inducing
locally trivial deformations of the degeneracy divisor.
	\end{prop}
\begin{proof}
	We have an exact sequence
	\[\exseq{T^\bullet_X(-D)}{T^\bullet_X\mlog{D}}{j_*T^\bullet_{X_1}}\]
	where $D$ is the degeneracy divisor of $\Pi$ and $j:X_1\to D\subset X$
	is the normalization map. \par 
	\emph{Claim}: the induced map
	\[\HH^1(T^\bullet_X\mlog{D})\to\HH^1(j_*T^\bullet_{X_1})\]
	is surjective.\par
	 Assuming this, a first-order deformation of $(X_1, \Pi_1)$
	given by $v\in \HH^1(j_*T^\bullet_{X_1})$
	lifts to a deformation $\tilde v\in \HH^1(T^\bullet_X\mlog{D})$,
	i.e. a deformation of $(X, \Pi)$ inducing a locally trivial deformation
	of $D$; as is well known (e.g. \cite{qsymplectic},  p. 1170 and \cite{logplus},
	Lemma 1), the latter deformations 
	are unobstructed thanks to Poisson duality and Hodge theory,
	hence the given first-order deformation $\tilde v$ 
	extends to a formal or analytic arc, hence the same is true of $v$.
	Consequently $(X_1, \Pi_1)$ has unobstructed deformations.\par
	\emph{Proof of claim:} This follows from the vanishing $\HH^2(T^\bullet_X(-D))=0$.
	To see the latter note the exact sequence
	\[...\ H^1(T^2_X(-D))\to \HH^2(T^\bullet_X(-D))\to H^2(T_X(-D))\ ...
	\]
	Because $D$ is anticanonical, the last group is Serre dual to
	$H^{1, 2n-2}_X$ which vanishes by hypothesis. Similarly
	the first group is dual to $H^{2, 2n-2}_X$ which also vanishes 
	thanks to Hodge symmetry $h^{1, 2n-2}=h^{2n-2, 1}=h^{2, 2n-1}$.
	\end{proof}
The condition $h^{1, 2n-2}_X=0$ seems weak and certainly holds for flag manifolds
and toric manifolds. Thus we conclude by Example \ref{toric} that the normalized
boundary of an even-dimensional toric variety $X$
carries unobstructed Poisson structures induced from $X$.

\bibliographystyle{amsplain}
\bibliography{../mybib}

\providecommand{\bysame}{\leavevmode\hbox to3em{\hrulefill}\thinspace}
\providecommand{\MR}{\relax\ifhmode\unskip\space\fi MR }
% \MRhref is called by the amsart/book/proc definition of \MR.
\providecommand{\MRhref}[2]{%
  \href{http://www.ams.org/mathscinet-getitem?mr=#1}{#2}
}
\providecommand{\href}[2]{#2}
\begin{thebibliography}{10}

\bibitem{brylinski}
J.~L. Brylinski, \emph{A differential complex for {P}oisson manifolds}, J.
  Diff. Geom. \textbf{28} (1988), 93--114.

\bibitem{ciccoli}
N.~Ciccoli, \emph{From {P}oisson to quantum geometry}, Notes taken by P.
  Witkowski, avaliable on
  \url{http://toknotes.mimuw.edu.pl/sem4/files/Ciccoli_fpqg.pdf}.

\bibitem{deligne-hodge-ii}
P.~Deligne, \emph{Th\'eorie de {H}odge {II}}, Publ. Math. IHES (1972), no.~40,
  5--57.

\bibitem{dufour-zung}
J.-P. Dufour and N.~T. Zung, \emph{Poisson structures and their normal forms},
  Prog. Math., vol. 242, Birkhauser, Basel- Boston- Berlin, 2005.

\bibitem{fulton-toric}
W.~Fulton, \emph{Introduction to toric varieties}, Annals. of Math studies,
  vol. 131, Princeton Unversity Press, 1993.

\bibitem{grif-schmid}
P.~Griffiths and W.~Schmid, \emph{Recent developments in {H}odge theory},
  Discr. Subgr. {L}ie Groups \& Appl. to Moduli, Proc. Int. Colloq. Bombay
  1973, Oxford Univ. Press, pp.~31--127.

\bibitem{ka-kontsevich-pa}
L.~Katzarkov, M.~Kontsevich, and T.~Pantev, \emph{{Bogomolov-Tian-Todorov}
  theorms for {Landau-Ginzburg} models}, arxiv.org/1409.5996.

\bibitem{kontsevich-gen-tt}
M.~Kontsevich, \emph{Generalized {Tian-Todorov} theorems}, Proceedings of
  Kinosaki conference, Kyoto University, 2008,
  \url{https://repository.kulib.kyoto-u.ac.jp/dspace/bitstream/2433/215060/1/2008-02.pdf}.

\bibitem{lieberman}
D.~Lieberman, \emph{Holomorphic vector fields and rationality}, Group actions
  and vector fields, proceedings, Vancouver 1981 (J.~B. Carrell, ed.), Lecture
  notes in Math., vol. 956, pp.~99--117.

\bibitem{lima-pereira}
R.~Lima and J.~V. Pereira, \emph{A characterization of diagonal {P}oisson
  structures}, Bull. Lond. Math. Soc. \textbf{46} (2014), 1203--1217.

\bibitem{polishchuk-ag-poisson}
A.~Polishchuk, \emph{Algebraic geometry of {P}oisson brackets}, J. Math. Sci.
  \textbf{84} (1997), 1413--1444.

\bibitem{incidence}
Z.~Ran, \emph{Incidence stratification on {H}ilbert schemes of smooth surfaces,
  and an application to {P}oisson structures}, International J. Math
  \textbf{27} (2016), 1--8, \url{arxiv.org/1502.00553}.

\bibitem{qsymplectic}
\bysame, \emph{Deformations of holomorphic pseudo-symplectic {P}oisson
  manifolds}, Adv. Math. \textbf{304} (2017), 1156--1175,
  \url{arxiv.org/1308.2442}.

\bibitem{logplus}
\bysame, \emph{A {B}ogomolov unobstructedness theorem for log-symplectic
  manifolds in general position.}, J. Inst. Math. Jussieu (2018),
  Erratum/corrigendum (2021); \url{arxiv.org/1705.08366}.

\bibitem{steenbrink-dubois}
J.~H.~M. Steenbrink, \emph{{Du Bois} invariants of isolated complete
  intersection singularities}, Ann. Inst. Fourier \textbf{47} (1997),
  1367--1377.

\bibitem{weinstein-modular}
A.~Weinstein, \emph{The modular automorphism group of a {P}oisson manifold}, J.
  Geom. Phys. \textbf{23} (1997), 379--394.

\end{thebibliography}
%\bibliography{mybib}
\end{document}